\DeclareMathOperator{\rnk}{rk}
\DeclareMathOperator{\codim}{codim}
\DeclareMathOperator{\sm}{sm}
\DeclareMathOperator{\Cc}{\mathcal{C}}
\DeclareMathOperator{\Lb}{\bar{\mathcal{L}}}
\DeclareMathOperator{\pr}{pr}
\DeclareMathOperator{\coker}{coker}
\DeclareMathOperator{\coh}{coh}
\DeclareMathOperator{\dR}{dR}
\DeclareMathOperator{\Dol}{Dol}
\DeclareMathOperator{\dual}{\vee}
\DeclareMathOperator{\Perf}{Perf}
\DeclareMathOperator{\Set}{Set}
\DeclareMathOperator{\QCoh}{QCoh}
\DeclareMathOperator{\supp}{supp}
\DeclareMathOperator{\Bun}{Bun}
\DeclareMathOperator{\I}{\mathcal{I}}
\DeclareMathOperator{\Qq}{\mathcal{Q}}
\DeclareMathOperator{\Qb}{\bar{\Qq}}
\DeclareMathOperator{\Jb}{\bar{J}}
\DeclareMathOperator{\Picb}{\overline{\Pic}}
\DeclareMathOperator{\J}{J}
\DeclareMathOperator{\Wb}{\mathbb{W}}
\DeclareMathOperator{\Db}{\mathbb{D}}
\DeclareMathOperator{\Hb}{\mathbb{H}}
\DeclareMathOperator{\Pp}{\mathcal{P}}
\DeclareMathOperator{\Hh}{\mathcal{H}}
\DeclareMathOperator{\Ll}{\mathcal{L}}
\DeclareMathOperator{\Pb}{\bar{\mathcal{P}}}
\DeclareMathOperator{\Pic}{\mathcal{P}ic}
\DeclareMathOperator{\Split}{\mathcal{S}}
\DeclareMathOperator{\Ssplit}{\mathcal{S}}
\DeclareMathOperator{\id}{id}
\DeclareMathOperator{\Coh}{Coh}
\DeclareMathOperator{\Sch}{Sch}
\DeclareMathOperator{\OmegaX}{\Omega^1_X}
\DeclareMathOperator{\OmegaXone}{\Omega^1_{X^{(1)}}}
\DeclareMathOperator{\Mod}{Mod}
\DeclareMathOperator{\res}{res}
\DeclareMathOperator{\invlim}{\varprojlim}
\DeclareMathOperator{\rk}{rk}
\DeclareMathOperator{\F}{\mathcal{F}}
\DeclareMathOperator{\Map}{Map}
\DeclareMathOperator{\Higgs}{\mathcal{M}_{Dol}}
\DeclareMathOperator{\intHiggs}{\mathcal{M}_{Dol}^{int}}
\DeclareMathOperator{\intSplit}{\mathcal{S}_{int}^0}
\DeclareMathOperator{\smHiggs}{\mathcal{M}_{Dol}^{sm}}
\DeclareMathOperator{\intLoc}{\mathcal{M}_{dR}^{int}}
\DeclareMathOperator{\ssLoc}{\mathcal{M}_{dR}^{ss}}
\DeclareMathOperator{\cLoc}{\mathcal{M}_{dR}^{coarse}}
\DeclareMathOperator{\sLoc}{\mathcal{M}_{dR}^{s}}
\DeclareMathOperator{\ssHiggs}{\mathcal{M}_{Dol}^{ss}}
\DeclareMathOperator{\G}{\mathbb{G}}
\DeclareMathOperator{\GL}{GL}
\DeclareMathOperator{\PGL}{PGL}
\DeclareMathOperator{\A}{\mathcal{A}}
\DeclareMathOperator{\smA}{\mathcal{A}^{(1)}_{sm}}
\DeclareMathOperator{\INTA}{\mathcal{A}_{int}}
\DeclareMathOperator{\intA}{\mathcal{A}^{(1)}_{int}}
\DeclareMathOperator{\M}{\mathcal{M}}
\DeclareMathOperator{\Y}{\mathcal{Y}}
\DeclareMathOperator{\Z}{\mathcal{Z}}
\DeclareMathOperator{\C}{\mathcal{C}}
\DeclareMathOperator{\D}{\mathcal{D}}
\DeclareMathOperator{\Ee}{\mathcal{E}}
\DeclareMathOperator{\Spec}{Spec}
\DeclareMathOperator{\Loc}{\mathcal{M}_{dR}}
\DeclareMathOperator{\HdR}{\chi_{dR}}
\DeclareMathOperator{\invHdR}{\chi^{-1}_{dR}}
\DeclareMathOperator{\HDol}{\chi_{Dol}}
\DeclareMathOperator{\invHDol}{\chi^{-1}_{Dol}}
\DeclareMathOperator{\Sym}{Sym}
\DeclareMathOperator{\Eend}{\underline{End}}
\DeclareMathOperator{\Hhom}{\underline{Hom}}
\DeclareMathOperator{\Oo}{\mathcal{O}}
\DeclareMathOperator{\Fr}{Fr}
\begin{document}

\title{Moduli of flat connections in positive characteristic}

\author{Michael Groechenig}

\newtheorem{defi}{Definition}[section]
\newtheorem{thm}[defi]{Theorem}
\newtheorem{prop}[defi]{Proposition}
\newtheorem{cor}[defi]{Corollary} 
\newtheorem{lemma}[defi]{Lemma}
\newtheorem{rmk}[defi]{Remark}
\newtheorem{cl}[defi]{Claim}
\newtheorem{ex}[defi]{Example}
\newtheorem{ass}[defi]{Assumption}
\newtheorem{conj}[defi]{Conjecture}

\maketitle

\begin{abstract}Exploiting the description of rings of differential operators as Azumaya algebras on cotangent bundles, we show that the moduli stack of flat connections on a curve (allowed to acquire orbifold points), defined over an algebraically closed field of positive characteristic is \'etale locally equivalent to a moduli stack of Higgs bundles over the Hitchin base. We then study the interplay with stability and generalise a result of Laszlo--Pauly, concerning properness of the Hitchin map. Using Arinkin's autoduality of compactified Jacobians we extend the main result of Bezrukavnikov--Braverman, the Langlands correspondence for $D$-modules in positive characteristic for smooth spectral curves, to the locus of integral spectral curves. We prove that Arinkin's autoduality satisfies an analogue of the Hecke eigenproperty.\end{abstract}

\tableofcontents

\section{Introduction}
Let $X$ be a smooth proper curve (which we allow to have orbifold points, in the sense of DM-stacks) defined over an algebraically closed field $k$, and $n$ a positive integer. We will be concerned with the study of various moduli stacks or spaces and their relation with each other. We use the notation $\Higgs$ for the moduli stack of rank $n$ Higgs bundles. Recall that a Higgs bundle is a pair $(E,\theta)$ consisting of a vector bundle $E$ on $X$ and a \emph{Higgs field} $\theta: E \to E \otimes \OmegaX.$
The moduli stack $\Higgs$ is equipped with a morphism $\chi_{\Dol}\colon \Higgs \to \A$, where $\A$ denotes the Hitchin base
$ \A = \bigoplus_{i=1}^n H^0(X,\Omega^{\otimes i}_X), $
and $\chi_{\Dol}$ sends the Higgs pair $(E,\theta)$ to the coefficients of the characteristic polynomial of $\theta$. The induced morphism for the moduli space of semistable Higgs bundles is proper and its generic fibre is an abelian variety.

We denote by $\Loc$ the moduli stack of pairs $(E,\nabla)$, where $E$ is a rank $n$ vector bundle on $X$ and $\nabla\colon E \to E \otimes \OmegaX$ is an algebraic flat connection.\footnote{We also use the term \emph{local system} for a pair $(E,\nabla)$.}

The aim of this paper is to study the moduli stack $\Loc$ for curves defined over an algebraically closed field $k$ of positive characteristic $p > 0$, which we fix once and for all. We will demonstrate that in this set-up the geometry of $\Loc$ is very similar to the geometry of $\Higgs$. 

Given a tangent vector field $\partial$ on a smooth scheme, the $p$-th iterate $\partial^p$ acts on functions as a derivation. The corresponding vector field is denoted by $\partial^{[p]}$. This allows us to define the \emph{p-curvature} of a flat connection in positive characteristic. It is given by the twisted endomorphism $E \to E \otimes \Fr^*\OmegaXone$ corresponding to
$ (\nabla_{\partial})^p - \nabla_{\partial^{[p]}}. $ The superscript ${}^{(1)}$ denotes the Frobenius twist of a variety, which the reader may safely ignore for now.
It has been shown in \cite{MR1810690} that the characteristic polynomial of the p-curvature gives rise to a natural morphism $\chi_{\dR}\colon \Loc(X) \to \A^{(1)}$ to the Hitchin base. We show that from the perspective of descent theory this map is indistinguishable from the Hitchin fibration $\Higgs(X^{(1)}) \to \A^{(1)}$.

\begin{thm}\label{thm:mainresult}
The $\A^{(1)}$-stacks $\Loc(X)$ and $\Higgs(X^{(1)})$ are \'etale locally equivalent\footnote{In this paper every covering in the \'etale topology is assumed to be fppf and \'etale.} over the Hitchin base $\A^{(1)}$. The same statement is true for the semistable loci $\ssLoc(X)$ and $\ssHiggs(X^{(1)})$.
\end{thm}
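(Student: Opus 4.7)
The plan is to exploit the classical fact, going back to work of Bezrukavnikov--Mirković--Rumynin, that the sheaf $\Dd_X$ of (crystalline) differential operators on $X$ is an Azumaya algebra over its center, which via the $p$-curvature map is identified with $\Fr_* \Oo_{T^*X^{(1)}}$. A flat connection $(E,\nabla)$ is the same datum as a $\Dd_X$-module structure on $E$, and the center acts via the $p$-curvature, so the fibre of $\chidR$ over $a \in \A^{(1)}$ parametrizes $\Dd_X$-modules supported on the spectral curve $X_a \subset T^*X^{(1)}$, that is, modules over the restricted Azumaya algebra $\Dd_X|_{X_a}$. On the other hand, the classical Beauville--Narasimhan--Ramanan correspondence identifies the fibre $\chiDol^{-1}(a)$ over $X^{(1)}$ with $\Oo_{X_a}$-modules of the appropriate rank. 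Thus the two stacks may be realized as moduli of modules over $\Dd_X|_{X_a}$ and $\Oo_{X_a}$ respectively.

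To globalize over $\A^{(1)}$, the first step is to construct the universal spectral curve $\pi:\X \to \A^{(1)}$ inside $T^*X^{(1)} \times \A^{(1)}$ and the restricted Azumaya algebra $\mathcal{A} := \Dd_X|_{\X}$ on it. Both $\Loc(X) \to \A^{(1)}$ and $\Higgs(X^{(1)}) \to \A^{(1)}$ can then be written as relative moduli stacks of torsion-free modules of rank one for $\mathcal{A}$ and $\Oo_{\X}$ respectively (with an appropriate pushforward to recover vector bundles on $X$ or $X^{(1)}$). The theorem reduces to showing that the class of $\mathcal{A}$ in the Brauer group $\Br(\X)$ can be killed by an étale cover of the base $\A^{(1)}$: a splitting $\mathcal{A}|_{\X_U} \cong \Eend(\F_U)$ by a locally free $\Oo_{\X_U}$-module $\F_U$ gives, by Morita equivalence $M \mapsto \Hhom_{\mathcal{A}}(\F_U, M)$, the desired equivalence over $U$.

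The core step, therefore, is the construction of splittings of $\mathcal{A}$ after étale base change. The strategy is to reduce to the existence of a splitting module fibrewise: for a single spectral curve $X_a$, any Azumaya algebra of rank $n^2$ on an integral curve is split by some rank $n$ vector bundle because $\Br$ of a one-dimensional scheme admitting a line-bundle splitting condition is controlled by the Picard stack. One then argues that such splitting modules deform in families, which by standard Artin approximation gives étale-local existence over $\A^{(1)}$. Over the smooth locus $\smA$ of the Hitchin base this is essentially well known; the more delicate part is the extension to the full base, where the spectral curves can be singular, and where one must combine formal splittings around the zero section (as used by Bezrukavnikov--Braverman) with approximation to produce actual étale splittings.

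The hardest step is the control of the splittings in families over the non-smooth locus of $\A^{(1)}$; this is where Arinkin's autoduality of compactified Jacobians enters, since it packages exactly the data needed to extend the splitting modules across the locus of integral but singular spectral curves. The statement for moduli spaces of semistable objects then follows because Morita equivalence preserves slopes (up to a global shift determined by $\rk \F_U$ and $\det \F_U$), hence preserves both stability and the GIT quotient, so the étale equivalence of stacks descends to an étale equivalence of $\ssLoc(X)$ and $\ssHiggs(X^{(1)})$ over the same étale cover of $\A^{(1)}$.
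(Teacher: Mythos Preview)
Your overall architecture is right: realize both stacks as moduli of modules on the universal spectral curve (for $\Oo$ and for the Azumaya algebra $\D_X$ respectively), and then split $\D_X$ \'etale locally over $\A^{(1)}$ to get a Morita equivalence. But there are two genuine problems in how you propose to carry out the splitting.

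First, your invocation of Arinkin's autoduality is misplaced. In the paper, autoduality of compactified Jacobians plays no role whatsoever in the proof of Theorem~\ref{thm:mainresult}; it is used only later, for the derived Langlands equivalence over the integral locus. The \'etale-local splitting of $\D_X$ on the universal spectral curve is obtained by a much more elementary and robust argument: one shows directly that $H^2_{\mathrm{et}}(C,\G_m)=0$ for \emph{any} proper curve $C$ over an algebraically closed field (reduced or not, integral or not), by reducing first to the reduced case via the truncated exponential sequence, then to the normalization, and finally to Tsen's theorem. Knowing this, the stack of relative splittings $\Ssplit \to \A^{(1)}$ is a $\Pic$-quasi-torsor with the same (unobstructed) deformation theory as $\Pic$ of a relative curve, hence is smooth over $\A^{(1)}$, and a smooth surjection admits sections \'etale locally. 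No autoduality, no Artin approximation beyond the representability input, and no restriction to integral spectral curves.

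Second, and relatedly, your fibrewise splitting claim is stated only for integral curves, and your reference to ``formal splittings around the zero section'' suggests you are treating the singular or non-reduced fibres as requiring special technology. They do not: the vanishing $H^2_{\mathrm{et}}(C,\G_m)=0$ is uniform across all spectral curves, including the nilpotent cone. This matters because the theorem asserts \'etale local equivalence over the \emph{entire} Hitchin base, not just the integral locus. For the semistable statement, your remark that Morita equivalence shifts slopes by a global constant is morally correct but needs care: the paper singles out a connected component $\Ssplit^0 \subset \Ssplit$ of splittings for which the degree is merely multiplied by $p$ (with no further shift), and verifies that this behaves correctly on subbundles by pulling back along the multiplication-of-characteristic-polynomials map $\A_k \times \A_l \to \A_n$.
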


The proof of this Theorem relies on the techniques introduced in \cite{Bezrukavnikov:mz} and \cite{Bezrukavnikov:fr}. It has been shown that the ring of differential operators in positive characteristic can be described as an Azumaya algebra on the cotangent bundle. As Higgs bundles can be thought of as certain coherent sheaves on cotangent bundles, this allows us to interpret local systems as twisted versions of Higgs bundles. In \cite{Bezrukavnikov:fr} it has been shown that this twistedness carries over to the moduli stacks, if we restrict to the open substacks corresponding to smooth spectral curves. Our main result \ref{thm:mainresult} is a generalisation of this observation to all spectral curves, including highly singular and non-reduced ones. 

The results described above have appeared in the authors PhD thesis, and were first disseminated in a 2013 preprint. After the preprint appeared, a partial generalisation of Theorem \ref{thm:main} has been obtained by Chen--Zhu \cite{Chen:2013aa}. They prove that for a reductive group $G$ the moduli stack of $G$-local systems on $X$ is \'etale locally equivalent to the moduli stack of $G$-Higgs bundles on $X^{(1)}$, provided the characteristic of the base field $k$ is big enough. What remains interesting about the $GL_n$-case considered in this article, is that it is possible to study the interaction of these \'etale local equivalences and stability conditions; which is not addressed in \cite{Chen:2013aa}.

After establishing Theorem \ref{thm:mainresult} we give two applications. First we deduce properness of the Hitchin map introduced by Laszlo and Pauly (\cite[Prop. 5.1]{MR1810690}), using Nitsure's properness result for the Hitchin map for Higgs bundles (\cite[Thm. 6.1]{MR1085642}).

\begin{cor}\label{cor:proper}
The morphism $\chi_{\dR}\colon \ssLoc(X) \to  \A^{(1)}$ is universally closed. This implies in particular that the corresponding map from the GIT moduli space of local systems to $\A^{(1)}$ is proper.
\end{cor}

Let us note that it is a consequence of the main result of \cite{MR1810690} that $\chi_{\dR}^{-1}(0)$ is universally closed over the base field. Therefore Corollary \ref{cor:proper} extends this result from a single fibre to the entire fibration. Theorem \ref{thm:main} and Corollary \ref{cor:proper} will be treated in the more general setting, of $X$ being an orbicurve, that is, a smooth, proper, tame Deligne-Mumford stack over $k$ (containing an open dense substack which is a scheme).

From now on we assume that $X$ is a smooth complete curve, without orbifold points.
As a second application we extend the main result of \cite{Bezrukavnikov:fr}. We establish a derived equivalence between the derived category of certain $D$-modules on the moduli stack of bundles $\Bun(X)$ and the derived category of coherent sheaves on an open substack $\intLoc(X) \subset \Loc$. As expected, this equivalence respects the action of the Hecke and multiplication operators. Due to the limitations of what is currently known about autoduality of compactified Jacobians in positive characteristic, we need to assume that the characteristic of the base field satisfies the estimate $p > 2n^2(h-1) + 1$, where $h$ denotes the genus of $X$. We use the notation $\intLoc(X)$ and $\intHiggs(X^{(1)})$ for the moduli stacks of local systems respectively Higgs bundles of rank $n$ with integral spectral curve. The recent work of Melo--Rapagnetta--Viviani \cite{Melo:2013aa} allows one to improve the restriction on the characteristic to $p \geq n^2(h-1) + 1$, as observed on p.3 of \emph{loc. cit.}\footnote{Nonetheless we believed it to be important to retain our original estimate for the characteristic, since \emph{loc.cit.} is citing our Remark \ref{rmk:haiman} as part of their justification of their improved estimate.} 
\begin{cor}\label{cor:Langlands}
There exists a canonical equivalence of derived categories intertwining the Hecke operator with the multiplication operator associated to the universal bundle $\mathcal{E}$ on $\Loc(X) \times X$, 
$$ D^b_{\mathrm{coh}}(\intLoc(X),\Oo) \cong D^b_{\mathrm{coh}}(\intHiggs(X^{(1)}),\D_{\Bun}). $$
\end{cor}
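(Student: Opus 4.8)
The plan is to realise both sides as categories of twisted coherent sheaves on $\intHiggs(X^{(1)})$ relative to the integral Hitchin base $\intA$, and then to interpolate between the two twists by means of Arinkin's autoduality of compactified Jacobians, applied fibrewise over $\intA$. On the de Rham side, Theorem~\ref{thm:mainresult}, combined with the description of $\D_X$ as an Azumaya algebra on the total space of $\OmegaXone$ (\cite{Bezrukavnikov:mz}), produces a $\G_m$-gerbe $\beta$ on $\intHiggs(X^{(1)})$ that is split \'etale-locally over $\intA$, together with a canonical equivalence $D^b_{\mathrm{coh}}(\intLoc(X),\Oo)\cong D^b_{\mathrm{coh}}(\intHiggs(X^{(1)}),\beta)$; concretely, a flat connection whose $p$-curvature has integral spectral curve $C_a$ in the total space of $\OmegaXone$ is the same datum as a torsion sheaf on that surface, supported along $C_a$ and twisted by $\D_X|_{C_a}$, and the resulting Brauer classes assemble into $\beta$. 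On the automorphic side, one regards $\intHiggs(X^{(1)})$ as the open substack of the cotangent stack $T^*\Bun_n(X^{(1)})$ where the spectral curve is integral, so that $D^b_{\mathrm{coh}}(\intHiggs,\D_{\Bun})$ is, by definition, the category of coherent modules over the Azumaya algebra $\D_{\Bun}$ of crystalline differential operators on $\Bun_n(X)$ restricted there, i.e.\ $D^b_{\mathrm{coh}}(\intHiggs(X^{(1)}),\alpha)$ for the Brauer class $\alpha=[\D_{\Bun}]$.

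Next I would invoke Arinkin's autoduality. The hypothesis $p>2n^2(h-1)+1$ ensures that the universal spectral curve over $\intA$ is a flat family of integral curves with planar singularities --- planarity being automatic, the curves lying on the smooth surface attached to $\OmegaXone$ --- to which Arinkin's construction of a relative Poincar\'e sheaf applies, and that $\intHiggs(X^{(1)})\to\intA$ is, up to a fixed degree shift which I suppress, the corresponding relative compactified Jacobian. The Poincar\'e sheaf is a Fourier--Mukai kernel on $\intHiggs(X^{(1)})\times_{\intA}\intHiggs(X^{(1)})$ which, for every $\G_m$-gerbe $\gamma$ on $\intHiggs(X^{(1)})$ split \'etale-locally over $\intA$, induces a derived equivalence $D^b_{\mathrm{coh}}(\intHiggs(X^{(1)}),\gamma)\cong D^b_{\mathrm{coh}}(\intHiggs(X^{(1)}),\gamma^{\vee}\otimes\tau)$, where $\gamma^{\vee}$ is the fibrewise inverse and $\tau$ is a fixed gerbe encoding the self-duality of the compactified Jacobian (essentially the obstruction to a canonical normalisation of the Poincar\'e sheaf). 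Taking $\gamma=\beta$ reduces the Corollary to the identity $\beta^{\vee}\otimes\tau=\alpha$ of $\G_m$-gerbes on $\intHiggs(X^{(1)})$.

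This identity I would prove by reduction to the locus $\smA\subset\intA$ of smooth spectral curves. Over $\smA$ the Hitchin fibres are torsors under abelian varieties, Arinkin's Poincar\'e sheaf specialises to the classical Fourier--Mukai kernel, and the identity $\beta^{\vee}\otimes\tau=\alpha$ is the Brauer-theoretic shadow of the main result of \cite{Bezrukavnikov:fr}, which identifies the Fourier--Mukai dual of $\D_{\Bun}$-modules over $\smHiggs(X^{(1)})$ with $\Loc(X)$ over $\smA$. To propagate the equality across the discriminant I would use that all three classes are defined over the whole integral locus --- $\alpha$ and $\beta$ coming from the Azumaya algebras $\D_{\Bun}$ and $\D_X$, which live on the smooth ambient spaces $T^*\Bun_n(X^{(1)})$ and the cotangent surface of $X^{(1)}$, and $\tau$ from the functoriality of the relative Poincar\'e sheaf --- together with the flatness of the Hitchin map and the Gorenstein property of the compactified Jacobians, to conclude that classes agreeing over the dense open $\smA$ agree over $\intA$. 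Finally, the asserted interchange of the Hecke operator on the $\D_{\Bun}$-side with multiplication by the universal bundle $\Ee$ on the $\intLoc$-side is, fibrewise, the familiar exchange under Fourier--Mukai of a Hecke modification --- tensoring the spectral sheaf by the ideal of a point --- with tensoring by a line bundle of degree one on the dual; it holds over $\smA$ by \cite{Bezrukavnikov:fr} and extends over $\intA$ since both composite functors are $\intA$-linear, continuous, and agree on the dense open $\smA$.

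The main obstacle is the identity $\beta^{\vee}\otimes\tau=\alpha$ over the full integral locus. Its proof rests essentially on Theorem~\ref{thm:mainresult}, which is what makes the twisted description of $\intLoc(X)$, and hence the gerbe $\beta$, available over singular spectral curves in the first place, and on a careful continuity argument transporting the Bezrukavnikov--Braverman matching across the discriminant: one cannot simply appeal to purity for Brauer groups, as $\intHiggs(X^{(1)})$ is not regular and the complement of $\smHiggs(X^{(1)})$ inside it is only of codimension one. A secondary point requiring care is checking that Arinkin's autoduality genuinely operates in characteristic $p$ under the stated bound --- this is where the estimate $p>2n^2(h-1)+1$ is used, to control the singularities of the spectral curves and the higher direct images defining the Poincar\'e sheaf.
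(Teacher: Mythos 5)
Your high‑level decomposition — pass to twisted coherent sheaves via the Azumaya description of $\D_X$ and $\D_{\Bun}$, invoke Arinkin's autoduality fibrewise over $\intA$, and pin down the residual twist by comparison with Bezrukavnikov--Braverman over $\smA$ — matches the skeleton of the paper's argument, and your remarks on planarity and the role of the characteristic bound are accurate. However, there are two substantive gaps and one factual slip.

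First, and most seriously, the Hecke eigenproperty is dismissed with the assertion that the two composite functors ``are $\intA$-linear, continuous, and agree on the dense open $\smA$'' and hence coincide. This is not a valid inference: two Fourier--Mukai kernels can perfectly well agree after restriction to an open dense subset of their common support and yet differ (e.g.\ by a complex supported on the complement). The paper devotes the largest technical portion of its argument precisely to closing this gap (Theorem~\ref{thm:Hecke}). The mechanism is not continuity but maximal Cohen--Macaulayness: one bounds the codimension of the support of the relevant kernel $\Theta$ (Lemma~\ref{lemma:codim}), bounds its amplitude and that of its dual (Lemmas~\ref{lemma:upperbound}, \ref{lemma:pullCM}, \ref{lemma:lci}), and applies Arinkin's duality criterion (Lemma~\ref{lemma:arinkin}) to conclude $\Theta[g]$ is a Cohen--Macaulay sheaf of the correct codimension — and \emph{only then} is it determined by its restriction to a complement of codimension $\geq 2$, where it is identified with $\Delta_* \Qb$ via the computation over the smooth locus. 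Without a Cohen--Macaulayness argument of this type your proof of the eigenproperty has a hole.

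Second, the reduction to an identity $\beta^{\vee}\otimes\tau=\alpha$ of $\G_m$-gerbes is cleaner conceptually than what the paper does, but you leave the crucial extension step (from $\smA$ to $\intA$) to a vague ``continuity/Gorenstein'' appeal and explicitly flag it as the main obstacle. The paper bypasses this by never formulating the statement at the level of Brauer classes: instead it works on the splitting stack $\Split$, pushes the (untwisted) splitting $\Ll$ forward along the codimension-$\geq2$ open immersion $j:\Split\times_{\A^{(1)}}\intHiggs\hookrightarrow\intHiggs\times_{\A^{(1)}}\intHiggs$, and shows by \'etale descent along sections of $\Split\to\intA$ that the resulting $\bar{\Ll}=j_*\Ll$ is locally Arinkin's kernel. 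This sidesteps the gerbe bookkeeping and simultaneously produces the explicit integral kernel needed for the eigenproperty; the abstract gerbe identity, by contrast, yields only an equivalence of twisted categories without a distinguished kernel.

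Finally, a factual correction: you assert that ``$\intHiggs(X^{(1)})$ is not regular.'' In fact $\intHiggs$ is smooth — over the integral locus it is the relative compactified Picard stack of a family of integral planar curves, which is smooth by the result of Fantechi--G\"ottsche--van Straten cited in the paper (and this smoothness is what the paper uses to extend the splitting from $\smA$). So the difficulty with the Brauer comparison is not a lack of regularity; it is rather that the argument needs to be made relative over $\intA$ and that the Poincar\'e sheaf, being only a coherent (not invertible) sheaf over the boundary, does not by itself induce a map of $\G_m$-gerbes in the na\"ive sense, which is why your $\tau$ needs a more careful construction.
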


Arinkin's autoduality of compactified Jacobians (cf. \cite{Arinkin:2010uq}) plays an essential role in our argument. We observe that this result still holds in large enough characteristic, and use it to deduce the existence of an integral kernel giving rise to the equivalence. We relate the Hecke eigenproperty of this kernel to an analogous property for Arinkin's Fourier-Mukai transform (Theorem \ref{thm:Hecke}), which has been stated in \cite[sect. 6.4]{Arinkin:2010uq}.

The Geometric Langlands Conjecture in positive characteristic has been studied by several authors since the seminal article \cite{Bezrukavnikov:fr}. We refer the reader to Travkin's \cite{Travkin:2011fk} for a treatment of Quantum Geometric Langlands in positive characteristic, and Chen--Zhu's \cite{chenzhu} article for a discussion of the case of semi-simple structure groups. As far as we know, the present work is the first extension of \cite{Bezrukavnikov:fr} to venture outside of the locus of smooth spectral curves or cameral covers.

\textit{Acknowledgements.} During my time of research on this topic I have profited from interesting discussions with Dmitry Arinkin, Pierre-Henri Chaudouard, Tam\'as Hausel, Jochen Heinloth, Yakov Kremnitzer, G\'erard Laumon, Christian Pauly and Rapha\"el Rouquier. I thank them for the valuable insights they have provided. H\'el\`ene Esnault encouraged me to update this paper, and thereby disseminate the results of my thesis. I thank her for her encouragement and interest. I thank Yun Hao for pointing out a mistake in a previous version of Theorem \ref{thm:bnrHiggs} and Proposition \ref{prop:local-bnr}. I also want to thank the EPSRC for generous financial support received under the grant EP/G027110/1 while the orginal version of this article was written, and EP/G06170X/1 during the revision process.

\section{$D$-modules in positive characteristic}\label{D}

Our understanding of flat connections in positive characteristic is based on a result of \cite{Bezrukavnikov:mz}, which describes the ring of differential operators in terms of an Azumaya algebra on the cotangent bundle. Before reviewing this theory in \ref{ssec:D} we recall the notion of an Azumaya algebra. 

\subsection{Azumaya algebras}\label{ssec:azumaya}

We refer the interested reader to chapter 4 in Milne's book \cite{MR559531} for a more detailed account of the theory of Azumaya algebras.

\begin{defi}\label{defi:azumaya}
\begin{enumerate}
\item[(a)] Let $U$ be a DM-stack and $\D$ a quasi-coherent sheaf of algebras on $U$. We say that $\D$ is a \emph{split Azumaya algebra} if there exists a locally free sheaf $E$ and an isomorphism of sheaves of algebras $\D \cong \Eend(E)$. The locally free sheaf $E$ is called a \emph{splitting} of $\D$.
\item[(b)] Let $U$ be a DM-stack and $\D$ a quasi-coherent sheaf of algebras on $U$. We say that $\D$ is an \emph{Azumaya algebra} on $U$ if there exists an \'etale covering $\{U_i \to U\}$, such that $\D|_{U_i}$ is a split Azumaya algebra for every $i \in I$.
\end{enumerate}
\end{defi}

The following remark follows directly from the definitions.

\begin{rmk}
Let $\D$ be a split Azumaya algebra with a splitting $E$ of rank $m$. Then $\D$ is a locally free sheaf of rank $m^2$. In particular it follows from faithfully flat descent theory that every Azumaya algebra $\D$ is locally free.
\end{rmk}

Given a splitting $E$ for an Azumaya algebra $\D$ on $U$ we can define a functor from quasi-coherent sheaves to $\D$-modules $\QCoh(U) \to \QCoh(U,\D),$ which sends a quasi-coherent sheaf $F$ to $E \otimes F$. There is also a functor $\delta_E\colon \QCoh(U,\D) \to \QCoh(U),$ which sends the $\D$-module $G$ to $\Hhom_{\D}(E,G)$. It is well-known that these two functors are mutually equivalences (\cite[Thm. 18.11 \& 18.24]{MR1653294}), in particular we obtain the following lemma, which gives the algebraic foundation for the proof of Proposition \ref{prop:local-bnr}.

\begin{lemma}[Morita theory]\label{lemma:morita}
Let $\D$ be an Azumaya algebra on a DM-stack $U$. Every splitting $E$ of $\D$ induces an equivalence of categories
$$\delta_E\colon \QCoh(U,\D) \cong \QCoh(U).$$
\end{lemma}

We can easily produce other splittings by twisting them with a line bundle $L$. This makes sense, since there is a natural isomorphism $\Eend(E) \cong \Eend(E \otimes L)$. We see that the set of splittings can be naturally endowed with the structure of a category: an arrow between two splittings $(\phi,E) \rightarrow (\psi,F)$ is a pair $(\gamma,L)$, where $L$ is a line bundle and $\gamma$ an isomorphism $E \rightarrow F \otimes L$, such that the diagram

\[
\xymatrix{ \D \ar[r]^{\phi} \ar[d]^{id_{\D}} & \Eend(E)\ar[d]^{(\gamma^{-1})^*\otimes \gamma} \\
\D \ar[r]^{\psi} & \Eend(F) }
\]

commutes. This category is actually a groupoid, the inverse of the morphism $(\gamma,L)$ being given by $(\gamma^{-1},L^{-1})$.

\begin{lemma}\label{lemma:split_line}
Let $E$, $F$ be two vector bundles over a DM-stack $X$, which are both splittings of an Azumaya algebra $\D$. Then there exists a morphism $(\gamma,L)$ between the two splittings.
\end{lemma}
\begin{proof}
This is an easy consequence of the fact that for a commutative ring $R$ the center of the matrix ring $M_n(R)$ is given by $R\cdot \id$. We know that $\Hhom_{\D}(F,-)$ is a categorical equivalence between the categories of $\D$-modules and the category of $\Oo_X$-modules (Lemma \ref{lemma:morita}). Since $E$ is another splitting, and \'etale locally both are equivalent (to the trivial rank $n$ locally free sheaf), we need to have that $L = \Hhom_{\D}(F,E)$ is \'etale locally free, thus it is a line bundle. On the other hand, $L \otimes_{\Oo_X} F \cong E$, therefore we get a morphism $(\gamma,L)$ between both splittings.
\end{proof}

\begin{cor}
The stack of splittings of an Azumaya algebra $\D$ has a natural structure of a $\G_m$-gerbe. It will be denoted by $\Y_{\D}$.
\end{cor}

In the following we will neglect $\G_m$ from the notation and simply refer to $\G_m$-gerbes as \emph{gerbes}. Gerbes are a higher notion of line bundles, in the sense of isomorphism classes of line bundles correspond to elements of  $H^1_{\text{Zar}}(X,\Oo_X^{\times})$ and isomorphism classes of gerbes to elements of $H^2_{\text{\'et}}(X, \Oo_X^{\times})$. 

\begin{prop}\label{prop:obstruction_splitting}
An Azumaya algebra $\D$ on $X$ splits if and only if the gerbe of splittings is neutral, i.e. if and only if the corresponding class in $H^2_{\text{\'et}}(X, \Oo_X^{\times})$ vanishes.
\end{prop}
\begin{proof}
This follows directly from the definition. A gerbe is neutral if its structure map admits a section. By definition, such a section corresponds to a global splitting of $\D$.
\end{proof}

This discussion is summarised by the following short exact sequence of group schemes.
$$1 \rightarrow \G_m \rightarrow \GL_n \rightarrow \PGL_n \rightarrow 1,$$
which gives rise to a long exact sequence of non-abelian cohomology sets
$$H^1_{\text{\'et}}(X,\GL_n) \rightarrow H^1_{\text{\'et}}(X,\PGL_n) \rightarrow H^2_{\text{\'et}}(X,\G_m).$$

\subsection{D-modules}\label{ssec:D}

In this section we recall foundational results on the ring of differential operators in positive characteristic. More detailed expositions can be found in \cite{Bezrukavnikov:fr} and \cite{Bezrukavnikov:mz}.

Let $U$ be a DM-stack over a field $k$ of characteristic $p > 0$. The \emph{Frobenius twist} $U^{(1)}$ of $U$ is given by the following cartesian diagram
\[
\xymatrix{ U^{(1)} \ar[r]\ar[d] & U \ar[d] \\
\Spec k \ar[r]^F & \Spec k, }
\]
where $F\colon k \to k$ denotes the Frobenius map $\lambda \mapsto \lambda^p$. There exists a canonical $k$-linear morphism $Fr_U\colon U \to U^{(1)}$, which is referred to as \emph{Frobenius morphism}. If $U$ is the zero set of a polynomial $$f(x_1,\dots x_n) = \sum a_{i_1\dots i_n}x^{i_1}\cdots x^{i_n},$$ then $U^{(1)}$ is the zero set of the polynomial $$\sum a_{i_1\dots i_n}^px^{i_1}\cdots x^{i_n},$$ and $Fr_U$ is given by $x \mapsto x^p$.

Given a smooth DM-stack $Y$ defined over $k$, the ring of (crystalline) differential operators $D_Y$ is defined to be the universal enveloping algebra $U\Theta_Y$ of the Lie algebroid of tangent vector fields $\Theta_Y$. By definition $Fr_Y$ is an affine morphism, therefore the study of $D_Y$-modules is equivalent to the study of $Fr_*D_Y$-modules. The theorem below states that relative to its centre $Fr_*D_Y$ is an Azumaya algebra (Definition \ref{defi:azumaya}).
For a variety $Z$ we denote by $\pi\colon T^*Z \rightarrow Z$ the canonical projection.

\begin{thm}[\cite{Bezrukavnikov:mz}]\label{thm:D-Azumaya}
The centre $Z(Fr_*D_Y)$ is canonically equivalent to the structure sheaf of the cotangent bundle $\iota\colon \pi_*\Oo_{T^*Y^{(1)}} = \Sym \Theta_{Y^{(1)}}$. Moreover relative to its centre, the ring $Fr_*D_Y$ is an Azumaya algebra $\D_Y$ of rank $p^{2\dim Y}$.
\end{thm}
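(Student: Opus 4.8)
The plan is to establish the two assertions — identification of the centre and the Azumaya property — essentially by reduction to the local, coordinatized situation. First I would treat the centre. Given a system of étale coordinates $x_1,\dots,x_d$ on an open $U \subseteq X$ with dual vector fields $\partial_1,\dots,\partial_d$, one computes directly inside $D_X$ that the operators $x_i^p$ (as multiplication operators) and $\partial_i^p - \partial_i^{[p]}$ are central: for $x_i^p$ this is Leibniz together with $\binom{p}{k} \equiv 0$ for $0 < k < p$ in characteristic $p$, and for the Artin–Schreier-type element $\partial_i^p - \partial_i^{[p]}$ this is the classical restricted-Lie-algebra identity (Jacobson's formula) showing $\ad(\partial)^p = \ad(\partial^{[p]})$ on $D_X$. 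These elements generate a subalgebra isomorphic to $\Oo_{T^*X^{(1)}}$ restricted to $U^{(1)}$: the $x_i^p$ give the functions pulled back from $X^{(1)}$, and the $\partial_i^p - \partial_i^{[p]}$ play the role of the fibre coordinates, so the subalgebra is the (pushforward of the) symmetric algebra $\Sym^\bullet T_{X^{(1)}}$. To see there is nothing more in the centre, one uses the standard PBW-type basis of $D_X$ over $\Oo_X$ given by monomials $\partial^{\underline{a}}$ with $0 \le a_i < p$ times a central part, i.e. $D_X$ is free of rank $p^d$ over $Z$; a direct commutator computation (bracket with $x_i$ lowers $\partial_i$-degree, bracket with $\partial_i$ shifts $x_i$-degree mod $p$) pins down the centre exactly. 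Finally one checks these local descriptions glue: the construction is coordinate-independent because $\partial \mapsto \partial^p - \partial^{[p]}$ and $x \mapsto x^p$ are intrinsic (the $p$-curvature / Frobenius constructions), giving the canonical global isomorphism $Z(Fr_*D_X) \cong \pi_* \Oo_{T^*X^{(1)}}$.

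For the Azumaya property, the cleanest route is to exhibit, étale locally on $T^*X^{(1)}$, a splitting module realizing $\D_X$ as an endomorphism algebra. Over a coordinate patch $U$ one takes the free $\Oo_X$-module on the basis $\partial^{\underline{a}}$, $0 \le a_i < p$ — equivalently $D_X \otimes_Z \kappa(s)$ for a point $s \in T^*U^{(1)}$ — and shows it is a simple module of dimension $p^d$ over the residue algebra $\D_X \otimes_Z \kappa(s)$, which has dimension $p^{2d}$ over $\kappa(s)$. Simplicity follows because $D_X$ acting on $\Oo_X$ (or a twist thereof by a point of the base) has no proper submodules: $\Oo_X$ is an irreducible $D_X$-module on any connected $X$, and the general fibre is a twist of this by a translation on $T^*X^{(1)}$. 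Having a faithful simple module of the right dimension over an algebra of dimension $(p^d)^2$ over a field forces, by Wedderburn, that the algebra is the full matrix algebra $\mathrm{Mat}_{p^d}(\kappa(s))$; carrying this out in families over an étale cover of $T^*X^{(1)}$ (using that such splitting modules exist Zariski-locally after passing to an Azumaya splitting cover, or invoking the general criterion that a finite $Z$-algebra which is a matrix algebra at every point and locally free of rank $(p^d)^2$ is Azumaya) gives the claim. The rank is then $p^{2\dim X}$ by construction.

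The main obstacle — and the place where some care is genuinely needed — is the passage from the pointwise/coordinatewise statements to the global Azumaya statement, i.e. verifying that the splitting happens étale-locally on $T^*X^{(1)}$ in a way compatible with the scheme structure, rather than merely fibrewise over points. One wants to know that $\D_X$ is locally free of the correct rank over its centre (this follows from the PBW filtration, whose associated graded is $\Sym^\bullet \Theta_X$, finite flat over $\Sym^\bullet T_{X^{(1)}}$) and that the Brauer obstruction vanishes étale-locally; the concrete argument is to restrict to the zero section or to any closed point of $X$, where the splitting bundle is genuinely visible, and then spread out. Since this theorem is quoted from \cite{Bezrukavnikov:mz}, I would in practice simply cite it, but the sketch above is how I would reconstruct the proof: local restricted-Lie-algebra computations for the centre, plus the irreducibility of $\Oo_X$ as a $D_X$-module to produce the local splitting.
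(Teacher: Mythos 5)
Note first that the paper does not actually prove this theorem: it is quoted from Bezrukavnikov--Mirkovi\'c--Rumynin (\cite{Bezrukavnikov:mz}), and the surrounding text only recalls the $p$-curvature construction defining the map $\pi_*\Oo_{T^*X^{(1)}} \to Z(Fr_*D_X)$. So there is no in-paper proof to compare against, and you correctly noted at the end that in practice you would cite the source. Your reconstruction of the centre computation---\'etale local coordinates, centrality of $x^p$ and of $\partial^p - \partial^{[p]}$ via Jacobson's formula, and the PBW filtration to exhaust the centre and compute the rank $p^{2\dim X}$---is the standard argument and is essentially correct.

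The Azumaya part, however, contains a genuine error. You assert that ``$\Oo_X$ is an irreducible $D_X$-module on any connected $X$.'' This is false in characteristic $p$: already for $X = \mathbb{A}^1 = \Spec k[x]$, the ideal $(x^p) \subset k[x]$ is a proper nonzero $D_X$-submodule, since $\partial(x^p f) = x^p\,\partial f$. More structurally, Cartier's theorem identifies $\Oo_{X^{(1)}}$ with the horizontal sections of $(\Oo_X, d)$, so $Fr_*\Oo_X$ is a rank-$p^{\dim X}$ module over $\Oo_{X^{(1)}}$, and $I \cdot Fr_*\Oo_X$ is a $D_X$-submodule for every ideal $I \subset \Oo_{X^{(1)}}$; thus $\Oo_X$ is very far from irreducible. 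What the argument actually needs is the \emph{fibrewise} statement: for a closed point $s = (x,\xi)$ of $T^*X^{(1)}$, the $p^{2\dim X}$-dimensional algebra $D_X \otimes_Z k(s)$ acts irreducibly and faithfully on the $p^{\dim X}$-dimensional fibre of $Fr_*\Oo_X$ at $x$, endowed with the module structure twisted by the covector $\xi$. This irreducibility is verified directly with the PBW basis in the completed local ring (the image of $1$ is a cyclic vector and the socle is one-dimensional); it does not follow from, and indeed contradicts, any global irreducibility of $\Oo_X$. With that correction, the rest of your outline---local freeness over $Z$ of rank $p^{2\dim X}$ together with the criterion that a finite locally free algebra whose geometric fibres are matrix algebras is Azumaya---does yield the theorem.
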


The identification of the centre with the cotangent bundle is realised by the $p$-curvature. If $\partial$ is a vector field on $X$, its $p$-th-power $\partial^p$ acts again as a derivation on functions, according to the Leibniz rule. We denote this vector field by $\partial^{[p]}$; like every vector field is gives rise to an order $\leq 1$ differential operator. We can also define $\partial^p$ as an element of the ring $D$, it is an order $\leq p$ differential operator. The difference $\partial^p - \partial^{[p]} \in D_Y$ is a non-zero differential operators of order $\leq p$. It can be shown to be a central element (cf. \cite[Lemma 1.3.1]{Bezrukavnikov:mz}). This expression is $p$-linear in $\partial$ and hence gives rise to a morphism 
\begin{equation}\label{eqn:iota}\iota\colon \pi_*\Oo_{T^*Y^{(1)}} \to Z(Fr_*D_Y).\end{equation}

This theorem implies the existence of an equivalence of categories (see lemma below) $$D_Y-\Mod \cong \D_Y-\Mod.$$ Note that objects in the latter category are no longer sheaves on $Y$ but on $T^*Y^{(1)}$.

\begin{lemma}\label{lemma:D}
\begin{enumerate}
\item[(a)] Let $M$ be a $D_Y$-module on $Y$, then there exists a $\D_Y$-module $\mathcal{M}$ on $T^*Y^{(1)}$, which is characterised up to a unique isomorphism by the condition 
\begin{equation}\label{eqn:condition}
\Fr_*M \cong \pi_*\mathcal{M},
\end{equation}
where $\pi\colon T^*Y^{(1)} \to Y^{(1)}$ denotes the canonical projection. Vice versa, if $\mathcal{M}$ is a $\D_X$-module on $T^*Y^{(1)}$, then there exists a $D_Y$-module $M$, characterised up to a unique isomorphism by the condition (\ref{eqn:condition}).
\item[(b)] The correspondence described in (a) is functorial, that is, can be refined to an equivalence of categories $$\Lambda\colon \QCoh(Y,D_Y) \cong \QCoh(T^*Y^{(1)},\D_Y),$$ which sends $M$ to $\mathcal{M}$.
\end{enumerate}
\end{lemma}

\begin{proof}
If $f\colon U \to V$ is an affine morphism, and $\mathcal{B}$ is a quasi-coherent sheaf of algebras on $U$, then $f_*$ induces an equivalence of categories $$f_*\colon \mathcal{B}-\Mod \cong f_*\mathcal{B}-\Mod.$$ Applying this equivalence once to $\Fr\colon Y \to Y^{(1)}$ and $D$, and once to $\pi\colon T^*Y^{(1)} \to Y^{(1)}$ and $\D$, we obtain the following chain of equivalences which implies the statement above
$$\QCoh(Y,D_Y) \cong \QCoh(Y^{(1)},\Fr_*D_Y) = \QCoh(Y^{(1)},\pi_*\D_Y) \cong \QCoh(T^*Y^{(1)},\D_Y).$$
This concludes the proof.
\end{proof}

\section{Local systems in positive characteristic}\label{local}

In this chapter we develop the theory of local systems, in analogy with the theory of Higgs bundles. The first two subsections are literature reviews, from subsection \ref{local-bnr} on, we present our contributions.

\subsection{The theory of Higgs bundles as a blueprint}\label{bnr}

Let $X$ be a smooth DM-stack. A Higgs field $\theta$ on a vector bundle $E$ on $X$ is a section of $\Eend(E) \otimes \OmegaX$, satisfying $\theta \wedge \theta = 0$. Equivalently, $E$ is endowed with the structure of a $(\Sym \Theta_X)$-module, where $\Theta_X$ denotes the sheaf of tangent vectors. We associate to $\theta$ its characteristic polynomial 
$$ a(\lambda) = \det(\lambda - \theta), $$
with coefficients $$a_i \in H^0(X,\Omega_X^{\otimes n-i})$$ for $i = 0, \dots, n-1$. The corresponding spectral cover $Y_a$ is the closed substack of $T^*X$ defined by the equation
$$ \lambda^n + a_{n-1} \lambda^{n-1} \cdots + a_0 = 0, $$
where $\lambda$ denotes the tautological section of the pullback $\pi^*\OmegaX$ to the cotangent space $T^*X$. Repeating this construction for a family of Higgs bundles parametrised by the scheme $S$, we obtain a morphism $$a\colon S \to \A,$$ which will also be referred to as the characteristic polynomial of the family of Higgs bundles. The space of spectral covers is the functor $\A\colon \Sch \rightarrow \Set$, which sends a scheme $S$ to the set $\bigoplus_{i = 0}^{n-1}\Gamma(X \times S,(\Omega_{X\times S/S}^1)^{\otimes i})$. The corresponding spectral cover is a family of orbicurves $Y_a \to S$, fitting into a commutative diagram
\[
\xymatrix{
Y_a \ar[r]^{\pi} \ar[rd] & X \times S \ar[d]\\
& S 
}
\]
If $X$ is proper, we have 
$$ \A = \Spec \Sym \left(\bigoplus_{i=1}^{n} H^0(X,\Omega_X^{\otimes i})\right)^{\vee}. $$

The scheme $\A$ parametrises the universal family $\phi\colon Y \to \A$ of spectral covers and in particular gives rise to a finite morphism
$\pi\colon Y \to X \times \A. $
\'Etale locally on $X$ we can trivialise the sheaf of tangent vector fields $\Theta_X$ and see that $$\pi_*\Oo_Y = \bigoplus_{i=0}^{n-1}p_1^*\Theta_X^{\otimes i}$$ is locally free. In particular we obtain that $\pi$ and $\phi$ are flat morphisms. Moreover using \cite[Prop. 7.8.4]{MR0163911} we see that $\phi$ cohomologically flat in degree zero. We record this observation for later use.

\begin{lemma}\label{lemma:pfcf}
For $X$ a smooth orbicurve, the morphism $\phi\colon Y \to \A$ is proper, flat and cohomologically flat in degree zero. Moreover, if $X$ is a curve (that is, a scheme) its geometric fibres are locally planar curves (that is, local embedding dimension 2).
\end{lemma}

Given a quasi-coherent sheaf $L$ on a spectral cover $Y_a$, we obtain a coherent sheaf on $T^*X$, that is an $(\Sym \Theta_X)$-module on $X$. Therefore the push-forward $\pi_{*}L$ is naturally endowed with a Higgs field. The Higgs sheaf $\pi_{*}L$ is a Higgs bundle if and only if $\pi_*L$ is locally free. This construction induces a natural bijection between Higgs bundles and certain coherent sheaves on spectral covers. It is usually referred to as the BNR correspondence, after Beauville--Narasimhan--Ramanan (see \cite{MR998478} for the case of integral spectral curves and \cite[Lemma 6.8]{MR1320603} for the general case). The statement given below is slightly weaker than the one given in \emph{loc. cit.}, which relates Higgs bundles to so-called pure sheaves. The version stated here has the advantage of being easier to prove and is sufficient for our purposes.

\begin{thm}[BNR correspondence]\label{thm:bnrHiggs}
Let $S$ be an arbitrary scheme, and $X$ a smooth DM-stack. There is a natural embedding of $S$-families of rank $n$ Higgs bundles $(E,\theta)$ on $X$, with characteristic polynomial $a = \chi(\theta)$, and coherent sheaves on the spectral cover $Y_a/S$, such that $\pi_{a,*}L$ is a locally free sheaf of rank $n$.
\end{thm}

\begin{proof}
Let $\theta\colon E \to E \otimes p_1^*\OmegaX$ be a Higgs field on a locally free sheaf of rank $n$ on $X \times S$, with characteristic polynomial $a$. Equivalently we can think of $\theta$ as a map $p_1^*\Theta_X \otimes E \to E$, where $\Theta_X$ denotes the sheaf of tangent vector fields on the variety $X$. This is the case since both $\OmegaX$ and $\Theta_X$ are locally free sheaves, which are dual to each other. Giving such a map, satisfying the integrality condition $\theta \wedge \theta = 0$, on the other hand is equivalent to endowing $E$ with the structure of a $\Sym p_1^*\Theta_X$-module. The Cayley-Hamilton Theorem implies that $E$ is moreover a $\Sym p_1^*\Theta_X/(a)$-module, which agrees with $\pi_*\Oo_{Y_a}$, where $\pi\colon Y_a \to X$ denotes the natural projection. Since $\pi$ is an affine morphism, we have that $\pi_*$ induces an equivalence between the category of coherent sheaves $L$ on $Y_a$, satisfying that $\pi_*L$ is locally free of rank $n$, and the category of rank $n$ Higgs bundles.
\end{proof}

\subsection{Local systems}\label{local}

Let us introduce the moduli problem of \emph{local systems}.
\begin{defi}\label{defi:localsystems}
Let $X$ be a smooth Deligne-Mumford stack, $S$ a scheme. An $S$-family of \emph{local systems} on $X$ is a pair $(\Ee,\nabla)$, where $\Ee$ is a vector bundle on $X \times S$, and $\nabla\colon \Ee \to \Ee \otimes \Omega_{X\times S/S}^1$ is an $\Oo_S$-linear map of sheaves satisfying the Leibniz rule $\nabla(fs) = (d_{X\times S/S}f) s + f \nabla s$, where $f$ is a section of $\Oo_{XÊ\times S}$ and $s$ a section of $\Ee$. Moreover, $\nabla$ has to satisfy the integrality condition $0 = \nabla^2\colon E \to E \otimes \Omega_{X\times S/S}^2,$ that is, be a \emph{flat connection}.
\end{defi}

From now on, we let $X$ be a smooth orbicurve, unless stated otherwise. By orbicurve we refer to the following particular case of a DM-stack.
\begin{defi}\label{defi:orbicurve_new}
An orbicurve $X$ over an algebraically closed field $k$ is a proper, tame (see \cite{MR2427954}) DM-stack of dimension $1$, defined over $k$, which contains an open dense substack which is a scheme.
\end{defi}

We emphasise that it is not necessary to impose the flatness condition $\nabla^2 = 0$ on the connection $\nabla$ for smooth orbicurves case. For dimension reasons, the sheaf $\Omega_{X \times S/S}^2$ vanishes, hence every connection on a smooth orbicurve is automatically flat.

\begin{defi}\label{defi:localstack}
The stack of local systems on $X$ will be denoted by $\Loc(X)$. By definition, it is given by the functor, sending a scheme $S$ to the groupoid of $S$-families of local systems on $X$.
\end{defi}

It has been shown by Laszlo and Pauly in \cite[Cor. 3.1]{MR1810690} that the stack $\Loc(X)$ is algebraic, if $X$ is an algebraic curve. Nonetheless, algebraicity will also follow from the main result of this chapter, which relates stacks of local systems to stacks of Higgs bundles (Theorem \ref{thm:main}).
%

Local systems inherit a stability condition from the one of bundles. Let $\mu(E) = \frac{\deg E}{\rnk E}$ denote the Mumford slope. We say that $(E,\nabla)$ is semistable if for every subconnection $(F,\nabla)$ we have $\mu(F) \leq \mu(E)$.

In the article \cite{MR0086359}, it was shown by M. Atiyah, that the characteristic classes of a vector bundle defined over a compact K\"ahler manifold, can be related to the obstruction of the existence of analytic connections. The same arguments also cover the case of a vector bundles defined on projective varieties of zero characteristic. As a corollary (Theorem 10 in \emph{loc. cit.}) he reproves a theorem of Weil, which states that a vector bundle defined over a curve of zero characteristic carries an algebraic connection, if and only if the degrees of all indecomposable summands are zero. We conclude from this discussion, that a local system in zero characteristic, is always semistable, since we have seen that a vector bundle carrying an algebraic connection has to be of degree zero. 

The same argument does not apply in the context of positive characteristic geometry. Nonetheless, it can be shown that over a perfect infinite field of characteristic $p$, a connection exists on a vector bundle if and only if for every indecomposable summand the degree satisfies  $p|\deg E$ (\cite{MR2237516}). 
\begin{defi}\label{defi:localstable}
We denote the stack of semistable local systems by $\ssLoc(X)$, the stack of stable local systems by $\sLoc(X)$. They are obtained by rigidifying the corresponding substacks of semistable, respectively stable objects. Here rigidification refers to forming the quotient by the action of $B\G_m$.
\end{defi}
In general we will not restrict the degree of our bundles.
As indicated by the discussion above, local systems in positive characteristic will not be automatically semistable, like it is the case in zero characteristic.

The following proposition can be derived from the main results of this chapter (Theorem \ref{thm:main} and Lemma \ref{lemma:p}) in the case of Higgs bundles.

\begin{prop}\label{prop:Loc_open}
The stacks $\ssLoc(X)$ and $\sLoc(X)$ are open substacks of the rigidification $[\Loc(X)/B\G_m]$. In particular, they are algebraic.
\end{prop}

We have already mentioned a central difference between the theory of local systems in zero and positive characteristic, when discussing the admissible degrees a local system can have (see the discussion Subsection \ref{local}). Another significant difference lies in the existence of a Hitchin map for local systems, emphasizing the relation with the theory of Higgs bundles. It has been studied by Laszlo and Pauly in \cite{MR1810690} and is based on the notion of $p$-curvature for flat connections.

\begin{defi}\label{defi:p-curvature}
Let $E$ be a quasi-coherent sheaf on a smooth scheme $X$, endowed with a flat connection $\nabla$. The \emph{$p$-curvature} of $\nabla$ is given by $$\Psi_{\nabla}(\partial)\colon s \mapsto (\nabla_{\partial})^p s - \nabla_{\partial^{[p]}} s, $$ where $\partial$ is a tangent vector field of $X$, $\partial^{[p]}$ is the tangent vector field given by the $p$-th power of the derivation $\partial$ (see Subsection \ref{ssec:D}), and $s$ denotes a section of $E$.
\end{defi}

The definition of the map $\iota$ (see Equation \eqref{eqn:iota}) has been inspired by the notion of $p$-curvature. Since a $D_X$-module is precisely a quasi-coherent sheaf on $X$ with a flat connection, we see that $\Psi_{\nabla}(\partial)(s)$ is given by applying the element $\iota(\partial)$ to the section $s$. Using this interpretation of $p$-curvature, one obtains \cite[Prop. 5.2 ]{Katz:fk}.
\begin{cor}\label{cor:p-linear}
For a quasi-coherent sheaf $E$ on a smooth $k$-scheme $X$, with flat connection $\nabla$, the $p$-curvature is \emph{$p$-linear}, that is, subject to the relation 
$$\Psi_{\nabla}(f\partial_1 + \partial_2) = f^p\Psi_{\nabla}(\partial_1) + \Psi_{\nabla}(\partial_2),$$ where $f$ is a section of $\Oo_X$, and $\partial_1$ and $\partial_2$ are sections of $\Theta_X$. This allows us to view $\Psi_{\nabla}$ as a morphism $$E \to E\otimes Fr^*\Omega_X^1,$$ where $Fr\colon X \to X^{(1)}$ denotes the Frobenius morphism (see Subsection \ref{ssec:D}).
\end{cor}
While a priori, $p$-curvature measures the deviation from $\nabla$ being a map of \emph{restricted $p$-Lie algebras}, there is a second interpretation as capturing the obstruction for $E$ descending along the Frobenius morphism $Fr\colon X \to X^{(1)}$. In order to make this observation precise, we have to endow Frobenius pullbacks $Fr^*E'$ with a connection.

\begin{lemma}\label{lemma:canonical-connection}
Every pullback $Fr^*E'$ of a quasi-coherent sheaf $E'$ on $X^{(1)}$ can be canonically endowed with a connection $\nabla^{can}$, which is well-defined by the property that $$\nabla (Fr^*s) \equiv 0,$$ for every section pulled back along the Frobenius. The connection $\nabla^{can}$ will be called the \emph{canonical connection} on $Fr^*E$. This construction gives rise to a functor $$Fr^*_{\nabla}\QCoh(X^{(1)}) \to \QCoh(X,D_X).$$
\end{lemma}

\begin{proof}
We write $(F^*E,\nabla^{can}) = (F^{-1}E \otimes_{F^{-1}\Oo_{X^{(1)}}} \Oo_X, 1 \otimes d),$ which is a functorial definition of the canonical connection.\footnote{I thank H\'el\`ene Esnault for pointing out this simplification.}
\end{proof}

Now we can state the descent-theoretic interpretation of $p$-curvature. It is a result of Cartier, which Katz states as Theorem 5.1 in \emph{loc. cit.}. The proof given here relies on the interpretation of $D_X$ as Azumaya algebra over its centre (Proposition \ref{thm:D-Azumaya}).

\begin{thm}[Cartier descent]\label{thm:Cartier}
Let $X$ be a smooth scheme. The functor $Fr_{\nabla}^*$ induces an equivalence between the category of quasi-coherent sheaves on $X^{(1)}$ and $D$-modules on $X$ with \emph{vanishing $p$-curvature}.
\end{thm}

\begin{proof}
Let $E$ be a $D_X$-module. We remind the reader of Lemma \ref{lemma:D}, which established a correspondence between $D_X$-modules $E$ and $\D_X$-modules $F$. Moreover, $\D_X$ was shown to be an Azumaya algebra in Theorem \ref{thm:D-Azumaya}. 

Interpreting $\Psi_{\nabla}$ as action of the centre $Z(Fr_*(D_X)) = \pi^{(1)}_*\Oo_{T^*X^{(1)}}$ (Theorem \ref{thm:D-Azumaya})  on $Fr_*E$, we see that $\Psi_{\nabla} \equiv 0$ is equivalent to the $\D_X$-module $F$ being supported on the zero fibre $X^{(1)} \hookrightarrow T^*X^{(1)}$. 

The $\D_X$-module $S$ associated to the trivial local system $(\Oo_X,d)$ induces a splitting of $\D_X$ when restricted to $X^{(1)} \hookrightarrow T^*X^{(1)}$. In particular, we see that $F$ can be uniquely written as $F \cong S \otimes E'$. The projection formula reveals an equivalence $$Fr_*E \cong S \otimes E' \cong Fr_*(\Oo_X) \otimes E' \cong Fr_*(Fr^*E'),$$ respecting the $Fr_*D_X$-module structure. We can therefore conclude that $E \cong Fr^*E'$.

The construction of the canonical connection in Lemma \ref{lemma:canonical-connection} allows us to conclude that this is an equivalence of categories as claimed.
\end{proof}

The next paragraph summarises the definition of the Hitchin map of Laszlo--Pauly \cite{MR1810690}. We refer the reader to \emph{loc. cit.} for a more detailed account of this approach. In Definition \ref{defi:HdR} we will redefine the Hitchin map for local systems, using the BNR correspondence (Proposition \ref{prop:local-bnr}).

As we have seen in Corollary \ref{cor:p-linear}, the object $\Psi_{\nabla}$ is a map $E \to E \otimes \Fr^*\Omega_{X^{(1)}}^1$, in resemblance with the definition of Higgs bundles. One can show that the characteristic polynomial of the $p$-curvature $\Psi_{\nabla}$ of a connection, is itself the pullback of an element of $\A^{(1)}$, the Hitchin base of the orbicurve $X^{(1)}$. In order for this statement to make sense, we recall that $k$-points of the affine space $\A^{(1)}$ correspond to elements of the vector space $$\bigoplus_{i = 0}^{n-1}H^0(X^{(1)},\Omega_{X^{(1)}}^1).$$ This follows from \cite[5.2.3]{Katz:fk}. A direct proof is given in \cite[Prop. 3.2]{MR1810690}.
\begin{defi}\label{defi:Local_Hitchin}
The Hitchin map for local systems, as defined by Laszlo--Pauly will be denoted by $\HdR\colon \Loc(X) \to \A^{(1)}$.
\end{defi}

It has been shown in Proposition 5.1 of \emph{loc. cit.} that the zero fibre of $\HdR$ restricted to the semistable locus, that is, the stack of semistable \emph{nilpotent connections}, is universally closed.

\begin{thm}[Laszlo--Pauly]\label{thm:laszlo-pauly}
The stack of semistable nilpotent connections $\invHdR(0)^{ss}$ is universally closed.
\end{thm}

We will generalise this result to the full Hitchin map $\HdR$, using the BNR correspondence developed in the next section.

To illustrate this theory we may consider the example\footnote{The author thanks Christian Pauly for explaining this example to him.} of line bundles on an elliptic curve $X$. The moduli space of flat degree zero line bundles on $X$ is a group scheme. It arises as an extension of the Jacobian $J_X$ by the one-dimensional vector space $H^0(X,\OmegaX)$. The Hitchin morphism $$\chi_{\dR}\colon \Loc \to \A^{(1)}$$ maps down to the one-dimensional vector space $\A^{(1)} = H^0(X^{(1)},\OmegaXone)$. There is an action of $J_{X^{(1)}}$ on the moduli space of rank $1$, degree $0$ local systems $\Loc_{,1,0}(X)$, respecting the morphism $\HdR$. The pullback $Fr^*L$ of a line bundle $L$ on $X^{(1)}$ is endowed with a canonical connection $\nabla^{can}$ of zero $p$-curvature, as we have seen in Lemma \ref{lemma:canonical-connection}. In particular, we can tensor an arbitrary rank one local system $(E,\nabla)$ with $(Fr^*L,\nabla^{can})$, leaving the $p$-curvature of $(E,\nabla)$ invariant. The Azumaya-algebra viewpoint of chapter \ref{D} provides us with an alternative understanding of this action. According to Lemma \ref{lemma:D}, we can think of $\Loc_{,1,0}(X)$ as the stack of splittings of $\D_X$ relative to the family of curves $X^{(1)} \times \A^{(1)} \to \A^{(1)}$. As we have seen in Lemma \ref{lemma:split_line}, the stack of splittings is naturally acted on by $\Pic(X^{(1)} \times \A^{(1)}/\A^{(1)})$.

\begin{ex}\label{ex:local-line}
For an elliptic curve $X$, the $\A^{(1)}$-stack $\Loc_{,1,0}(X)$ of rank $1$, degree $0$ local systems is \'etale locally equivalent to the $\A^{(1)}$-stack $\Higgs_{,1,0}(X^{(1)})$ of rank $1$, degree $0$ Higgs bundles. 
\end{ex}

Before stating this example we have already illustrated how to give a proof of Example \ref{ex:local-line} using the Azumaya picture of differential operators. Below we give a more elementary discussion.
\begin{proof}[Proof of Example \ref{ex:local-line}]
The proof proceeds by constructing a section of the Hitchin map $\HdR$, after an \'etale base change. As \emph{ansatz} we study flat connections with given $p$-curvature on the trivial line bundle $(\Oo,d+\omega)$. This corresponds to finding a right-inverse to the map $H^0(X,\Omega^1) \to \Loc \to H^0(X^{(1)},\OmegaX)$. According to formula 2.1.16 in \cite{MR565469} this map is the sum of a $p$-linear and a linear map of vector spaces. Without loss of generality we may assume it is the map $\lambda \mapsto \lambda^p - \lambda$, known as the \emph{Artin-Schreier} morphism, that is, it is \'etale. By construction, we see that the codomain of the map $\A \to \A^{(1)}$ parametrises a family of connections $(\Oo_X,d+\omega)$ on the trivial vector rank one bundle with prescribed $p$-curvature. This allows us to conclude that after base-change of $\chi_{\dR}$ along this \'etale map, there exists a section $s$ of $\chi_{\dR}$. 

Using this section we can construct a morphism $\Loc_{,1,0}(X) \times_{\A^{(1)}} \A \to J_{X^{(1)}}.$ A pair consisting of a local system $(E,\nabla)$ with $p$-curvature $a' \in \A^{(1)}$, and an Artin-Schreier lift $a$ of $a'$, is sent to $(\Oo,d+\omega_a)^{\vee} \otimes (E,\nabla),$ which is itself a rank one local system with $p$-curvature zero. By Cartier descent (Theorem \ref{thm:Cartier}), it corresponds to a unique line bundle on $X^{(1)}$.
This construction gives rise to a map $\Loc_{,1,0}(X) \times_{\A^{(1)}} \A \to \Higgs_{,1,0}(X^{(1)}) \times_{\A^{(1)}} \A.$ An obvious inverse to this map can now be constructed, by using the map $\A \times \Pic(X^{(1)}),$ which sends $(a,L)$ to the local system $(\Oo,d+\omega_a) \otimes (Fr^*L,\nabla^{can})$.
\end{proof}

\subsection{The BNR correspondence}\label{local-bnr}

In its simplest form, the BNR correspondence for Higgs bundles relates the groupoid of Higgs bundles $(E,\theta)$ with characteristic polynomial $a$, to certain coherent sheaves $L$ on the spectral curve $Y_a$. In this subsection, we will prove a similar characterisation for local systems. In the case of smooth spectral curves this description was contained in the proof of Lemma 4.8 in \cite{Bezrukavnikov:fr}.

We denote by $\pi\colon Y^{(1)} \to X^{(1)} \times \A^{(1)}$ the universal spectral cover of $X^{(1)}$ parametrised by $\A^{(1)}$, where the superscript ${}^{(1)}$ denotes the Frobenius twist of a scheme, as defined in Subsection \ref{ssec:D}. To avoid cluttering our notation with indices, we do not decorate $\pi$ with a superscript, as in $\pi^{(1)}$.

\begin{prop}[BNR for local systems]\label{prop:local-bnr}
Giving an $S$-family of local systems of rank $n$ on a smooth Deligne-Mumford stack of dimension $d$ is equivalent to giving a morphism $a\colon S \rightarrow \A^{(1)}$, a coherent sheaf $\Ee$ on $Y^{(1)} \times_{\A^{(1)}} S$, carrying a structure of a $\D_{X}$-module, and satisfying that $(\pi_S)_* \Ee$ is locally free Higgs bundle on $Y^{(1)}$ of rank $p^d n$ and characteristic polynomial $a^{p^d}$.
\end{prop}

Before delving into the proof below, we will sketch the main ideas and difficulties. We use the equivalence of categories of Lemma \ref{lemma:D} $$\QCoh(X,D_X) \cong \QCoh(T^*X^{(1)},\D_X),$$ where $\D_X$ is an Azumaya algebra on $T^*X^{(1)}$. Recall that under this equivalence, a local system $(E,\nabla)$ is sent to the $Fr_*\D_X$-module $Fr_*E$, which gives rise to a $\D_X$-module $\Ee$. The $p$-curvature endows the rank $p^dn$ vector bundle $Fr_*E$ on $X^{(1)}$ with a Higgs field. The BNR correspondence for Higgs bundles (Theorem \ref{thm:bnrHiggs}) allows us now to relate the Higgs bundle $Fr_*E$ to a coherent sheaf $\mathcal{F}$ supported on a spectral curve $Y^{(1)}_b$. The degree of the polynomial $b$ equals the rank $p^dn$ of $Fr_*E$. 

As in the proof of \emph{loc. cit.} the main difficulty arises when showing that the characteristic polynomial $b$ of the Higgs bundle $Fr_*E$ together with the $p$-curvature as Higgs field, can be written as $b = a^p$. In \emph{loc. cit.} this is shown by a computation, while our proof below makes use of the fact that $D_X$ gives rise to an Azumaya algebra $\D_X$ on $T^*X^{(1)}$.

\begin{proof}[Proof of Proposition \ref{prop:local-bnr}]
The coherent sheaf $Fr_*E$ is locally free of rank $p^dn$, where $d = \dim X$. The $p$-curvature of $\nabla$ endows $Fr_*E$ with a Higgs field. An S-family $(E,\nabla)$ of local systems gives therefore rise to a morphism $$b\colon S \to \A_{p^dn}^{(1)},$$ that is, a characteristic polynomial of degree $p^dn$. According to the BNR correspondence for Higgs bundles (Theorem \ref{thm:bnrHiggs}) the sheaf $\Ee$ on $S \times T^*X^{(1)}$ is supported on the spectral cover $Y^{(1)}_b$ corresponding to the characteristic polynomial $b$.

Since $\rk Fr_*E = p^dn$ we have $\deg b = p^dn$, which is a $p$-th multiple of the degree. We need to show that $b = a^{p^d}$ for a degree $n$ characteristic polynomial $a$, and that $\Ee$ is supported on the corresponding spectral cover. This relation refers to the product of characteristic polynomials, where the coefficients are viewed as elements of the graded ring $$\bigoplus_{i=0}^{\infty}H^0(X^{(1)},(\OmegaXone)^{\otimes i}).$$

Since support can be checked \'etale locally, we may assume that $X$ is a smooth scheme. Let $x$ be a geometric point of $X \times S$ and $U = \Spec \Oo$ the spectrum of the corresponding henselian ring. We consider the base change $Y^{(1)} \times_X U = V \to U$. Because this arrow is a finite morphism and $\Oo$ is henselian, we conclude that $V$ is the spectrum of a product of local henselian algebras (\cite[Thm 4.2(b)]{MR559531}). In particular we know that $\D|_V$ is split. There exists an isomorphism $\D|_V \cong \Eend(M)$, where $M$ denotes a rank $p^d$ vector bundle on $V$. Since $\Gamma(\Oo_V)$ is a product of local rings, $M$ is free. Thus we may identify $\D|_V$ with the matrix algebra $M_{p^d}(\Oo_V) = \Eend(\Oo_V^{\oplus p^d}),$ that is, $\Oo_V^{\oplus p^d}$ is a splitting for $\D|_V$. 

This implies the existence of a coherent sheaf $\mathcal{F} = \delta_{\Oo_V^{\oplus p^d}}(\Ee)$ (where we use the notation of Lemma \ref{lemma:morita}), such that we have $$\Ee \cong \bigoplus_{i=1}^{p^d}\mathcal{F},$$ which in turn implies a decomposition of the Higgs bundle $Fr_*E$ as a direct sums of $p^d$ copies of the same Higgs bundle $(\widetilde{E},\widetilde{\theta})$ (we denote by $\widetilde{\theta}$ the Higgs field induced by the BNR correspondence). In particular we obtain that $\rk \widetilde{E} = n$ and therefore that $(\widetilde{E},\widetilde{\theta})$ is supported on the spectral cover corresponding to a degree $n$ polynomial $a$, satisfying $b = a^{p^d}$.
We conclude that $\mathcal{F}$ and $\Ee$ are supported on the spectral cover $Y^{(1)}_a$.
\end{proof}

One consequence of the above discussion is the existence of a canonical morphism from the stack of flat connections $\Loc$ to the affine space $\A^{(1)}$, providing an alternative to Definition \ref{defi:Local_Hitchin}. Recall that we have seen in Proposition \ref{prop:local-bnr} that the data of a local system can be encoded by a $\D_X$-module $\Ee$, supported on a spectral cover. The morphism defined below simply forgets the sheaf $\Ee$, respectively sends it to its scheme-theoretic support.

\begin{defi}\label{defi:HdR}
The morphism $\HdR\colon \Loc(X) \to \A^{(1)}$
$$(a\colon S \to \A^{(1)}, \Ee) \mapsto (a\colon S \to \A^{(1)})$$
 is called the \emph{twisted Hitchin morphism}.
\end{defi}

%

\subsection{Relative splittings of Azumaya algebras}\label{splitting}

Our main result in this subsection is a generalisation of a classical theorem, which is established, using a result in Galois theory due to Tsen. We refer the reader to Example 2.22 (d) in \cite{MR559531}) for a more detailed exposition.

\begin{thm}[Tsen]\label{thm:tsen}
Let $X$ be a smooth proper curve defined over an algebraically closed field $k$ of arbitrary characteristic. Then we have $H^2_{\text{\'et}}(X,\G_m) = 0$, hence every Azumaya algebra defined over $X$ splits.
\end{thm}

We have seen at the end of Subsection \ref{ssec:azumaya} that an Azumaya algebra is split if and only if the corresponding class in $H^2_{\text{\'et}}(X,\G_m)$ vanishes. Therefore the first assertion above implies the second one. In this particular instance, also the converse is true. Theorem 2.16 of \cite{MR559531} implies every class of $H^2_{\text{\'et}}(X,\G_m)$ on a quasi-compact scheme can be represented by an Azumaya algebra, when restricted to the complement of a closed subscheme of codimension $> 1$. In particular we see that on a curve, every class in $H^2_{\text{\'et}}(X,\G_m)$ arises from an Azumaya algebra.

Tsen's Theorem \ref{thm:tsen} will be generalised in this section in two different directions. First of all we remove the smoothness assumption, and secondly, we further relativise this statement. Moreover, we allow our curves to acquire tame orbifold points. The validity of Tsen's theorem in an orbifold context is guaranteed by a result of F. Poma (\cite[Cor. 4.15]{MR3016518}).

\begin{defi}\label{defi:relative-splitting}
Let $X$ be a scheme, given a morphism of stacks $\pi\colon Y \rightarrow X$ and an Azumaya algebra $\D$ over $Y$ we say that $\D$ \emph{splits relatively over $X$}, if there exists an \'etale covering $(U_i)_{i \in I}$ of $X$, such that $\D$ is split over every fibre product $U_i \times_X Y$.
\end{defi}

Our main result in this subsection is a generalisation of Theorem \ref{thm:tsen}. We will remove the smoothness assumption and further prove the corresponding statement relative to a more general base.

\begin{defi}\label{defi:relative_curve}
A \emph{relative orbicurve} is a morphism of finite type tame Deligne-Mumford stacks $\pi\colon Y \to X$ over $k$, which is proper, flat, cohomologically flat in degree zero, and whose geometric fibres are orbicurves (Definition \ref{defi:orbicurve_new}).
\end{defi}

We can now state the main result of this section.

\begin{thm}\label{thm:relative_curve}
Let $\pi\colon Y \rightarrow X$ be a relative orbicurve, and $\D$ an Azumaya algebra on $Y$. Then every Azumaya algebra over $Y$ splits relatively over $X$.
\end{thm}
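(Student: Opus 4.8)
The plan is to reduce the statement to a cohomological vanishing: the obstruction to relative splitting of $\D$ over $X$ is a class in $H^0(X, R^2\pi^{et}_*\G_m)$, so it suffices to show that this sheaf vanishes, or at least that the Brauer class of $\D$ restricted to each fibre product $U_i\times_X Y$ dies over a suitable \'etale cover $\{U_i\}$ of $X$. Since the question is local on $X$ in the \'etale topology, I would first pass to a strictly Henselian local base, so that $X = \Spec R$ with $R$ strictly Henselian, and $Y \to \Spec R$ a proper flat curve, cohomologically flat in degree zero. The goal then becomes: every Azumaya algebra on such a $Y$ is split, i.e. $\mathrm{Br}(Y) = 0$.

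The key input is the classical fact (Tsen's theorem, in the form used above) that an Azumaya algebra over a smooth complete curve over an algebraically closed field splits, together with its extension to singular curves: the Brauer group of any one-dimensional proper scheme over an algebraically closed field vanishes, because $H^2_{et}(C,\G_m) = 0$ for a curve $C$ over $\bar k$ (this follows from $\mathrm{Pic}$ being divisible and the Brauer group injecting into $H^2$, or directly from the computation of $H^2_{et}(C,\G_m)$ via the normalization sequence and Tsen for the smooth model). So on the closed fibre $Y_0$ over the closed point of $\Spec R$, the restriction $\D|_{Y_0}$ is split: there is a vector bundle $E_0$ on $Y_0$ with $\D|_{Y_0} \cong \Eend(E_0)$. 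The strategy is then to deform this splitting off the closed fibre. I would argue in two stages: first over the formal/infinitesimal neighborhoods, then algebraize.

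Concretely, for the infinitesimal step: a splitting of $\D$ is a locally free module over $\D$ of the appropriate rank (a "splitting module"), and the obstruction to lifting such a module from $Y_n := Y \times_R R/\mathfrak{m}^{n+1}$ to $Y_{n+1}$ lies in $H^2$ of $Y_0$ with coefficients in $\Eend(E_0)\otimes(\mathfrak{m}^{n+1}/\mathfrak{m}^{n+2})$, i.e. in $H^2(Y_0, \Oo_{Y_0})\otimes(\cdots)$ — which vanishes since $Y_0$ is a curve. Hence the splitting lifts compatibly to the formal completion $\hat Y = \invlim Y_n$. For algebraization, cohomological flatness in degree zero of $\pi$ and properness let me invoke formal GAGA / Grothendieck's existence theorem (using that $R$ is complete — after first replacing strictly Henselian by complete, which is harmless since $\mathrm{Br}$ is insensitive to this completion, or by a limit/approximation argument to descend from the completion back to an \'etale neighborhood) to algebraize the formal splitting module to an actual vector bundle $E$ on $Y$ with $\D \cong \Eend(E)$.

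The main obstacle I anticipate is the algebraization/descent step rather than the deformation step: one has to be careful that a splitting \emph{module} (not merely an abstract isomorphism class of the Brauer-trivial algebra) algebraizes, and that the isomorphism $\D \cong \Eend(E)$ descends from the formal completion to the scheme $Y$ and then from the complete local base to an honest \'etale neighborhood of the point in $X$. This is where cohomological flatness in degree zero is essential — it guarantees that $\pi_*$ of the relevant sheaves behaves well under base change, so that the formal sections glue to genuine sections and Grothendieck existence applies. The deformation-theoretic vanishing is routine once one observes everything lands in $H^{\geq 2}$ of a curve; the care is all in making the passage from "formally split" to "\'etale-locally split on $X$" rigorous, presumably via Artin approximation or direct use of the fact that $\D$ being split is an open condition detectable after finite presentation.
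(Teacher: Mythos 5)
Your proposal is correct and uses the same essential ingredients as the paper: the vanishing $H^2_{et}(C,\G_m)=0$ for proper curves over an algebraically closed field (Theorem \ref{thm:curve}), unobstructedness of the deformation problem because the fibres are curves, Grothendieck's existence theorem combined with cohomological flatness in degree zero to algebraize, and an Artin-type approximation to descend from the complete local base to an \'etale neighbourhood. The paper packages the final step somewhat differently — it forms the stack $\Split$ of relative splittings, proves via Artin's representability theorem (Lemmas \ref{lemma:inverse_limits} and \ref{lemma:deformation}) that $\Split$ is an algebraic stack smooth over $X$, and then concludes that smooth morphisms admit \'etale-local sections — but this is precisely the rigorous form of the approximation argument you gesture at, and the underlying mathematics is the same.
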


We begin by treating the absolute case (that is, with base being $\Spec k$), which is stated in \cite{MR926276}, in a remark after Lemma I.5.2.

\begin{thm}\label{thm:curve}
Let $X$ be a proper noetherian $DM$-stack over an algebraically closed field $k$ of dimension $\leq 1$, then $H^2_{\text{\'et}}(X,\G_m) = 0$. In particular, every Azumaya algebra defined over $X$ splits.
\end{thm}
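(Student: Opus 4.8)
The plan is to reduce, by successive dévissage, from an arbitrary proper Noetherian $k$-scheme $X$ of dimension $\leq 1$ to a single smooth proper curve over $k$, and there to invoke Tsen's theorem. Algebraic closedness of $k$ enters in exactly three places, and it is worth recording them at the outset: every closed point of $X$ has residue field $k$ (so that $0$-dimensional reduced $k$-schemes are finite disjoint unions of copies of $\Spec k$, which have no higher étale cohomology); the function field of a smooth proper curve over $k$ has trivial Brauer group (Tsen); and the fraction field of a strictly henselian discrete valuation ring with residue field $k$ is $C_1$, hence again has trivial Brauer group (Lang). Granting $H^2_{et}(X,\G_m)=0$, the last clause is formal: an Azumaya algebra $\mathcal{A}$ on $X$ has a class in $\mathrm{Br}(X)\subseteq H^2_{et}(X,\G_m)$, and triviality of this class means precisely that $\mathcal{A}\cong\Eend(\mathcal{E})$ for a locally free sheaf $\mathcal{E}$ (see the Azumaya theory in \cite{MR559531}).

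\textbf{The two reductions.} First we pass to $X$ reduced. With $\mathcal{N}\subseteq\Oo_X$ the nilradical and $i\colon X_{\red}\hookrightarrow X$, the exact sequence of étale sheaves $1\to 1+\mathcal{N}\to\G_{m,X}\to i_*\G_{m,X_{\red}}\to 1$, together with the equivalence of the étale topoi of $X$ and $X_{\red}$, reduces the statement for $X$ to that for $X_{\red}$ once $H^2_{et}(X,1+\mathcal{N})=0$. But $1+\mathcal{N}$ carries a finite filtration with successive quotients $\mathcal{N}^j/\mathcal{N}^{j+1}$ ($j\geq 1$), which are quasicoherent $\Oo_X$-modules, and for quasicoherent sheaves $H^2_{et}=H^2_{\mathrm{Zar}}=0$ in dimension $\leq 1$ (comparison theorem and Grothendieck vanishing). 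Next, assume $X$ reduced; since $X$ is of finite type over $k$, its normalization $\nu\colon\widetilde{X}\to X$ is finite, and $\widetilde{X}$ is a finite disjoint union of normal integral proper $k$-schemes of dimension $\leq 1$, that is, of smooth proper curves over $k$ and copies of $\Spec k$. The map $\G_{m,X}\to\nu_*\G_{m,\widetilde{X}}$ is injective with cokernel $\mathcal{Q}$ a skyscraper sheaf supported on the finite non-normal locus of $X$; as those points have residue field $k$, $H^q_{et}(X,\mathcal{Q})=0$ for $q\geq 1$. Since $\nu$ is finite, $R^{>0}\nu_*=0$, hence $H^2_{et}(X,\nu_*\G_{m,\widetilde{X}})=H^2_{et}(\widetilde{X},\G_m)$, and combining the two long exact sequences gives $H^2_{et}(X,\G_m)\cong H^2_{et}(\widetilde{X},\G_m)$, a product over the connected components of $\widetilde{X}$. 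The $\Spec k$ components contribute $0$, so it remains to handle a single smooth proper curve.

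\textbf{The smooth proper curve.} Let $\widetilde{X}$ be a smooth proper curve over $k$ with function field $K$ and generic point $j\colon\Spec K\hookrightarrow\widetilde{X}$. For the regular $1$-dimensional scheme $\widetilde{X}$ there is the divisor exact sequence of étale sheaves
$$ 1 \longrightarrow \G_{m,\widetilde{X}} \longrightarrow j_*\G_{m,K} \longrightarrow \bigoplus_{x}(i_x)_*\mathbb{Z} \longrightarrow 1, $$
the sum running over closed points, with $i_x\colon\Spec k(x)=\Spec k\hookrightarrow\widetilde{X}$; the skyscraper term has vanishing $H^1_{et}$. A Leray spectral sequence for $j$, using $R^1j_*\G_m=0$ (Hilbert 90) and $R^2j_*\G_m=0$ — whose stalks at closed points are the Brauer groups of the fraction fields of strictly henselian discrete valuation rings with residue field $k$, which vanish by Lang's theorem — identifies $H^2_{et}(\widetilde{X},j_*\G_m)$ with $H^2_{et}(\Spec K,\G_m)=\mathrm{Br}(K)$. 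By Tsen's theorem $\mathrm{Br}(K)=0$, since $K$ has transcendence degree $1$ over the algebraically closed field $k$. The long exact sequence attached to the divisor sequence then yields $H^2_{et}(\widetilde{X},\G_m)=0$, completing the argument.

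\textbf{Main obstacle.} Everything above is routine dévissage with exact sequences of étale sheaves; the genuine content is concentrated in two inputs, Tsen's theorem and its local counterpart Lang's theorem, and it is precisely there that dimension one and algebraic closedness are used — without them the statement is false. An alternative to the divisor sequence in the last step is to combine the injectivity $\mathrm{Br}(\widetilde{X})\hookrightarrow\mathrm{Br}(K)$ for regular integral schemes with the equality $H^2_{et}(\widetilde{X},\G_m)=\mathrm{Br}(\widetilde{X})$ for curves and then apply Tsen, but this replaces the elementary divisor sequence by heavier black boxes and so I would prefer the route described above.
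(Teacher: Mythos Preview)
Your proposal is correct and follows essentially the same route as the paper: reduce to the reduced case via the truncated exponential/filtration by powers of the nilradical, then to the normalization via the short exact sequence with skyscraper cokernel, and finally invoke Tsen for a smooth proper curve over an algebraically closed field. The only difference is one of exposition: where the paper simply cites Milne for the smooth case, you unpack it via the divisor sequence and the Leray spectral sequence for the generic point (bringing in Lang's theorem for the stalks of $R^2j_*\G_m$), which is exactly the standard argument behind that citation.
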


\begin{proof}
At first we reduce to the case where $X$ is reduced. Let $\mathcal{I}$ be a quasi-coherent sheaf of ideals on $X$ satisfying $\mathcal{I}^2=0$. We consider the closed immersion $j\colon Y \rightarrow X$ given by this sheaf of ideals in $\Oo_X$ and study the truncated exponential sequence
\[
\xymatrix{ 0 \ar[r] & \mathcal{I} \ar[r]^{\exp} &  \Oo_X^{\times} \ar[r] & j_*\Oo_{Y}^{\times} \ar[r] & 1. }
 \]
The exponential function $\exp\colon \mathcal{I} \rightarrow  \Oo_X^{\times}$ is defined by the expression $f \mapsto 1 + f$ and satisfies $\exp(f+g) = \exp(f)\exp(g)$, $\exp(0) = 1$. In particular the map takes values in the sheaf of abelian groups of units. The corresponding long exact sequence implies that $H^2_{\text{\'et}}(X, \G_m) = H^2_{\text{\'et}}(Y,\G_m)$, since $H^i(X, \mathcal{I}) = 0$ for $i > 1$. There exists a quasi-coherent sheaf of ideals $\mathcal{J}$ on $X$, such that $\Oo_X/\mathcal{J} = \Oo_{X^{red}}$. Since $\mathcal{J}$ consists of nilpotent elements and $X$ is noetherian, there exists a positive integer $k$, such that $\mathcal{J}^k = 0$. Let us define $X_i\hookrightarrow X$ to be the closed immersion given by the sheaf of ideals $\mathcal{J}^{i}$. By definition we have $X_k = X$ and $X_0 = X^{red}$. Moreover $\mathcal{I}_i=\mathcal{J}^{i-1}/\mathcal{J}^i$ defines a quasi-coherent sheaf of ideals on $X_i$ satisfying $\mathcal{I}_i^2 = 0$. From the discussion above we may conclude that $$H^2_{\text{\'et}}(X_k, \G_m) = \cdots = H^2_{\text{\'et}}(X_0,\G_m).$$

Assuming that $X$ is reduced, we denote by $j\colon Y \rightarrow X$ the normalization map. The morphism $j$ is finite, therefore the functor $j_*$ is exact (cf. \cite[Cor II.3.6]{MR559531}). We can now apply the \'etale cohomology functor to the short exact sequence
$$ 1 \rightarrow \Oo_X^{\times} \rightarrow  j_*\Oo_Y^{\times} \rightarrow j_*\Oo_Y^{\times}/\Oo_X^{\times} \rightarrow 1 $$
and obtain a long exact sequence
$$ \dots \rightarrow H^i_{\text{\'et}}(X,\Oo_X^{\times}) \rightarrow H^i_{\text{\'et}}(Y,\Oo_Y^{\times}) \rightarrow H^i(j_*\Oo_Y^{\times}/\Oo_X^{\times}) \rightarrow H^{i+1}_{\text{\'et}}(X,\Oo_X^{\times}) \rightarrow \dots \;.$$

The quotient sheaf $ j_*\Oo_Y^{\times}/\Oo_X^{\times}$ is supported at finitely many closed points, therefore all of its higher cohomology groups have to vanish. As a consequence we obtain that $H^2_{\text{\'et}}(X,\Oo_X^{\times}) = 0$.

As has been argued above, we may assume that $X$ is smooth, and therefore we are in the situation of Theorem \ref{thm:tsen}.
\end{proof}

Recall that a splitting of an Azumaya algebra $\D$ on $X$ is a pair $(\phi,E)$, where $E$ is a locally free sheaf on $X$ and $$ \phi\colon \Eend(E) \to \D $$ is an isomorphism.

\begin{defi}
Given an Azumaya algebra $\D$ over $Y$ and a morphism $Y \to X$, we define a $2$-functor $\Ssplit$ from the category $\Sch/X$ to the $2$-category of groupoids sending a scheme $U \rightarrow X$ to the groupoid of splittings of $\D$ over $U \times_X Y$. Here a morphism between two splittings $(\phi,E)$, $(\psi,F)$ is defined to be a pair $(\gamma,L)$ where $L$ is a line bundle on $U \times_X Y$ and $\gamma\colon E \rightarrow F \otimes L$ is an isomorphism. This is a stack referred to as the stack of relative splittings of $\D$ along $\pi\colon Y \to X$.
\end{defi}

To deduce the relative from the absolute case, we need to study the deformation theory of splittings.

\begin{lemma}[Commutation with inverse limits]\label{lemma:inverse_limits}
Let $\pi\colon Y \rightarrow X$ be a relative orbicurve (Definition \ref{defi:relative_curve}) and $\D$ an Azumaya algebra over $Y$. Given a complete noetherian local $k$-algebra $\bar{A}$ together with a morphism $\Spec \bar{A} \rightarrow X$, then $\D$ splits over the base change $\Spec \bar{A} \times_X Y$. Furthermore, we obtain for the $2$-functor $\Ssplit$ that
$$ \Ssplit(\bar{A}) \rightarrow \invlim \Ssplit(\bar{A}/\mathfrak{m}^n) $$
is an isomorphism.
\end{lemma}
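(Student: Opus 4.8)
The plan is to reduce the statement to Grothendieck's existence theorem, using elementary deformation theory to produce --- from the splitting of $\D$ over the closed fibre guaranteed by Theorem \ref{thm:curve} --- a compatible system of splittings over all infinitesimal thickenings. Write $A_n = \bar A/\mathfrak{m}^n$, $Y_n = \Spec A_n \times_X Y$ and $Y_\infty = \Spec \bar A \times_X Y$. Since $\pi$ is proper, so are $Y_n \to \Spec A_n$ and $Y_\infty \to \Spec \bar A$, and $Y_0 := Y_1$ is a proper curve over the separably closed residue field $\bar A/\mathfrak{m}$. By Theorem \ref{thm:curve} --- whose proof only invokes Tsen's theorem, valid over any separably closed field --- we have $H^2_{et}(Y_0,\G_m) = 0$, hence $\D|_{Y_0}$ splits and $\Ssplit(A_1)$ is non-empty.

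Next I would show that the restriction functor $\Ssplit(A_{n+1}) \to \Ssplit(A_n)$ is essentially surjective for every $n \geq 1$. By flatness of $\pi$, the closed immersion $Y_n \hookrightarrow Y_{n+1}$ is defined by the square-zero ideal $\mathcal{I}_n = \Oo_Y \otimes_{\Oo_X} (\mathfrak{m}^n/\mathfrak{m}^{n+1})$, a coherent $\Oo_{Y_0}$-module. The key observation is that the sheaf of automorphisms of a splitting $(E,\phi)$ --- a pair $(\gamma, L)$ consisting of a line bundle $L$ and an isomorphism $\gamma : E \to E \otimes L$ inducing the identity on $\Eend(E)$ --- is canonically $\Oo^\times$: such a $\gamma$ is locally multiplication by a unit, which forces $L$ to be trivial. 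Consequently the obstruction to deforming a splitting across the square-zero thickening $Y_n \hookrightarrow Y_{n+1}$ lies in $H^2(Y_0,\mathcal{I}_n)$, and the deformations, when they exist, form a torsor under $H^1(Y_0,\mathcal{I}_n)$. Since $\dim Y_0 \leq 1$ and $\mathcal{I}_n$ is coherent, $H^2(Y_0,\mathcal{I}_n) = 0$, so a deformation always exists; iterating from the closed fibre produces an object $((E_n,\phi_n))_n$ of $\invlim_n \Ssplit(A_n)$.

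The final step is algebraization. As $Y_\infty \to \Spec \bar A$ is proper and $\bar A$ is noetherian and complete, Grothendieck's existence theorem asserts that the completion functor $\Coh(Y_\infty) \to \invlim_n \Coh(Y_n)$ is an equivalence. I would then argue: the system $(E_n)$ algebraizes to a coherent sheaf $E$ on $Y_\infty$, which is locally free because it is $\bar A$-flat with locally free closed fibre $E_0$; the isomorphisms $\phi_n : \Eend(E_n) \to \D|_{Y_n}$ form a morphism in $\invlim_n \Coh(Y_n)$ with target the completion of $\D|_{Y_\infty}$, hence algebraize to a morphism $\phi : \Eend(E) \to \D$, whose inverse is the algebraization of $(\phi_n^{-1})$. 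Thus $(E,\phi) \in \Ssplit(\bar A)$ restricts to the given system --- in particular $\D$ splits over $Y_\infty$ --- which yields essential surjectivity. Applying the same equivalence to line bundles and to isomorphisms of the relevant coherent sheaves (invertibility, being an isomorphism, and the Azumaya-splitting condition all being checkable on the closed fibre by Nakayama) gives full faithfulness of $\Ssplit(\bar A) \to \invlim_n \Ssplit(A_n)$, completing the proof.

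I expect the point requiring the most care to be the deformation step: one must identify the automorphism sheaf of a splitting as $\Oo^\times$, so that the obstruction to deforming a splitting --- rather than deforming $E$ and the isomorphism $\phi$ separately --- is packaged into $H^2$ of a single coherent sheaf on the one-dimensional scheme $Y_0$, where it vanishes for dimension reasons. The remaining ingredients, namely Theorem \ref{thm:curve} for the closed fibre and Grothendieck's existence theorem for the algebraization, are standard once this observation is in place.
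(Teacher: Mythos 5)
Your proof is correct, but it routes the argument somewhat differently from the paper. The paper invokes Theorem~\ref{thm:curve} at \emph{every} level~$Y_n$ to obtain a (possibly incompatible) sequence of splittings~$S_n$, and then rectifies these choices inductively: the discrepancy $S_{n+1}|_{Y_n}$ vs.\ $S_n$ is a line bundle on~$Y_n$, which is lifted to~$Y_{n+1}$ using the relative-curve hypothesis; the compatible sequence is then algebraized via Grothendieck's existence theorem. You instead invoke Theorem~\ref{thm:curve} only on the closed fibre~$Y_0$ and then deform the splitting directly across each square-zero thickening, packaging the obstruction into $H^2(Y_0,\mathcal{I}_n)$ after identifying the automorphism sheaf of a splitting with~$\Oo^\times$. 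Both arguments bottom out at the same vanishing $H^2(Y_0,\text{coherent})=0$ for dimension reasons: the paper applies it to the lifting of the difference line bundle, you apply it to the lifting of the splitting itself. In effect, you are proving a piece of the subsequent Lemma~\ref{lemma:deformation} (that the deformation theory of~$\Ssplit$ agrees with that of~$\Pic$) as a stepping stone, whereas the paper derives Lemma~\ref{lemma:deformation} \emph{from} the present lemma; your ordering is perfectly coherent, and has the small advantage of invoking the absolute splitting theorem only once, but it does require the reader to accept the obstruction-theoretic bookkeeping for a $\G_m$-gerbe (or equivalently, a $\Pic$-quasi-torsor), which the paper sidesteps at this point by the line-bundle rectification trick. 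The algebraization step and the full-faithfulness reduction to line bundles are essentially identical in the two proofs.
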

\begin{proof}
In the course of the proof we will assume that $X = \Spec \bar{A}$. We denote the base change $Y \times_X \Spec A/\mathfrak{m}_A^n$ by $Y_n$. The corresponding formal scheme is denoted by $\widehat{Y}$.

Grothendieck's Existence Theorem states that the abelian category of coherent sheaves on $Y$ is equivalent to the abelian category of coherent sheaves on the formal scheme $\widehat{Y}$, which is a projective 2-limit category of the categories $\Coh(Y_n)$ (cf. \cite[Thm 8.4.2]{MR2223409} for the case of schemes, and \cite[App. A]{MR1862797} for tame Deligne-Mumford stacks). According to Theorem \ref{thm:curve} there exists a splitting $S_n$ of $\D|_{Y_n}$ for every $n$. In order to obtain a splitting of $\D$ we need to choose compatible splittings $S_n$. This is always possible, as the splitting $S_{n+1}|_{Y_n}$ differs from $S_n$ by a twist by a line bundle (see Lemma \ref{lemma:split_line}). Since $\pi\colon Y \to X$ is a relative orbicurve, we can lift the difference line bundle from $Y_n$ to $Y_{n+1}$. Therefore, a sequence of compatible splittings $S_n$ of $\D_{Y_n}$ exists.

To prove the asserted equivalence, we use the existence of a splitting and reduce to the analogous statement for line bundles. This is an easy consequence of Grothendieck's Existence Theorem quoted above, since line bundles are characterised as invertible coherent sheaves.
\end{proof}

We refer the reader to \cite{MR0399094} for the definition of the deformation theory of a stack. Note that in our case the deformation problem is unobstructed, as we are dealing with a relative orbicurve. According to a Theorem of M. Artin (cf. \cite[Thm 5.3]{MR0399094}), the deformation theory of a stack allows us to decide wether it is algebraic or not.

\begin{thm}
Let $\pi\colon Y \rightarrow X$ be a relative orbicurve and let $\D$ be an Azumaya algebra over $Y$. The $2$-functor $\Ssplit$ is representable by an algebraic stack.
\end{thm}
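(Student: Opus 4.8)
The plan is to invoke Artin's representability criterion (\cite[Thm 5.3]{MR0399094}), which reduces representability of a stack by an algebraic stack to a finite list of verifications: that $\Ssplit$ is a stack for the fppf topology, that it is locally of finite presentation, that it satisfies effectivity of formal deformations together with the Schlessinger--Rim conditions, and that it admits a deformation and obstruction theory that is suitably well-behaved (coherent, compatible with completions, and of the expected relative dimension). The key observation that makes this feasible is that, by Lemma \ref{lemma:deformation}, the deformation theory of $\Ssplit$ coincides with that of the relative Picard functor $\Pic$ of the relative curve $\pi: Y \to X$, and the relative Picard stack of a proper flat cohomologically flat morphism is known to be algebraic. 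So most of the hypotheses will be checked by transporting them across the equivalence of deformation theories; the genuinely new content is packaging the ``quasi-torsor'' structure so that this transport is legitimate at the level of Artin's axioms, not merely at the level of first-order deformations.

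First I would record that $\Ssplit$ is a stack for the fppf (hence \'etale) topology: descent for splittings is descent for the pair $(E,\phi)$, which reduces to descent for quasi-coherent sheaves on $U\times_X Y$ plus descent for morphisms of sheaves, both of which are standard. Next I would verify that $\Ssplit$ is locally of finite presentation, i.e.\ commutes with filtered colimits of affine $X$-schemes; this is again inherited from the analogous property of $\Coh(Y/X)$ and $\Pic(Y/X)$, since a splitting over a filtered colimit of rings is a compatible system of splittings over the pieces. For the Schlessinger--Rim conditions and effectivity of formal deformations, I would use Lemma \ref{lemma:inverse_limits}: it shows precisely that over a strict complete noetherian local ring $\bar A$ mapping to $X$ the algebra $\D$ splits, and that $\Ssplit(\bar A) \to \invlim_n \Ssplit(\bar A/\mathfrak m^n)$ is an equivalence, which is exactly the effectivity statement; the remaining homogeneity/exactness conditions on the deformation functor follow from Lemma \ref{lemma:deformation} together with the corresponding known facts for $\Pic$.

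The deformation and obstruction theory is the crux. Here I would argue that once a splitting exists over a closed subscheme $U_0 \hookrightarrow U$ defined by a square-zero ideal $\I$, the choices of extension to $U$ form a torsor (possibly empty, but here nonempty because the deformation problem for a relative curve is unobstructed, as remarked before the lemma) under $H^1(U_0\times_X Y, \I\otimes \Oo_{Y})$ of relative degree one, with automorphisms governed by $H^0$ — i.e.\ exactly the $\Pic$ deformation theory, now made functorial in $U_0$ and $\I$ as Artin requires. The main obstacle I anticipate is verifying the compatibility and coherence axioms for this tangent-obstruction theory in the generality where $\D$ need not split globally on $Y$ (only relatively over $X$): one must phrase everything in terms of the gerbe of splittings and be careful that the ``$\Pic$ quasi-torsor'' structure of Lemma \ref{lemma:deformation} is used only after passing to an \'etale cover of $X$ trivializing the relative Brauer class, then glue the resulting algebraicity statement back down using descent — this is where the hypothesis that $\pi$ is a \emph{relative curve} (so Theorem \ref{thm:relative_curve} applies) is doing real work, and where the argument must be written carefully rather than merely quoted.
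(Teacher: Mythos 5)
Your overall strategy---invoke Artin's representability criterion and transport the necessary axioms from $\Pic(Y/X)$ via the comparison of deformation theories in Lemma~\ref{lemma:deformation}---is the same as the paper's, and your expansion of the terse one-line argument (checking the stack condition, limit preservation, effectivity via Lemma~\ref{lemma:inverse_limits}, and the Schlessinger--Rim/obstruction-theory axioms) is exactly the kind of unpacking the paper leaves to the reader.

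However, the resolution you propose for your ``main obstacle'' would introduce a circularity. You suggest passing to an \'etale cover of $X$ that trivializes the relative Brauer class and then descending the algebraicity statement, explicitly invoking Theorem~\ref{thm:relative_curve}. But in the paper's logical order, Theorem~\ref{thm:relative_curve} is proved \emph{after} the algebraicity theorem: its proof cites Lemma~\ref{lemma:smooth}, whose statement already assumes $\Split$ is an algebraic stack. So the relative splitting theorem cannot be an input here. The way out, and what the paper's argument actually rests on, is that no global (or \'etale-local-on-$X$) splitting is needed to run Artin's criterion: the axioms are tested over Artinian thickenings and strict complete noetherian local rings mapping to $X$, and in precisely that setting Lemma~\ref{lemma:inverse_limits} (which depends only on the absolute Theorem~\ref{thm:curve}, not on the relative one) guarantees that $\D$ splits on the corresponding base change of $Y$. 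Thus the ``$\Pic$ quasi-torsor'' becomes an honest $\Pic$-torsor in every situation Artin's axioms interrogate, and the transport of deformation theory, obstruction theory, homogeneity, and effectivity from $\Pic$ to $\Ssplit$ is legitimate without any appeal to the relative splitting statement. With that correction your proof matches the intended argument.
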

\begin{proof}
The stack $\Split$ is by definition equivalent to the \emph{Hom-stack} $\Map_{\A^{(1)}}(Y^{(1)},\Y_{\D_{\Bun}})$. Algebraicity of such Hom-stacks has been shown under quite general assumptions by M. Aoki in \cite{MR2194377}, respectively \cite{MR2258535}. The family of spectral curves $Y^{(1)}/\A^{(1)}$ is known to be proper, and Lemma \ref{lemma:inverse_limits} guarantees that the additional assumption of \cite{MR2258535} is satisifed.
\end{proof}

Spelling out an alternative proof of algebraicity, one uses the fact that $\Split$ is a $\Pic$ quasi-torsor. By this we mean that if a splitting exists, then it gives rise to an identification of the groupoid of splittings with the groupoid of line bundles. Moreover Lemma \ref{lemma:inverse_limits} shows that $\D$ splits on formal fibres, rendering the restriction of $\Split$ to formal fibres a $\Pic$-torsor. It has been shown in \cite{MR0260746} that $\Pic$ is an algebraic stack, by a close study of the deformation theory of $\Pic$. But since $\D$ splits on formal fibres, $\Pic$ and $\Split$ cannot be distinguished on the level of deformation theory, which allows us to conclude that $\Split$ is an algebraic stack too. 

\begin{lemma}\label{lemma:smooth}
Let $\pi\colon Y \rightarrow X$ be a relative orbicurve and let $\D$ be an Azumaya algebra over $Y$. Then the algebraic stack $\Split$ is smooth over $X$.
\end{lemma}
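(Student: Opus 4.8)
The plan is to show smoothness by verifying that $\Split$ is formally smooth over $X$ and locally of finite presentation, the latter being automatic from algebraicity established above. Since we already know from Lemma \ref{lemma:deformation} that $\Split$ and $\Pic$ have the same deformation theory, and $\Pic_{Y/X}$ is smooth over $X$ precisely because $\pi$ is a relative curve (so that $H^2$ of the structure sheaf on the fibres vanishes and all deformations of line bundles are unobstructed), it suffices to transport this smoothness across the equivalence of deformation theories. Concretely, I would phrase formal smoothness in terms of the infinitesimal lifting criterion: given a square-zero extension $A' \twoheadrightarrow A$ of $X$-algebras with kernel $I$, and a splitting over $\Spec A \times_X Y$, I must produce a lift over $\Spec A' \times_X Y$.

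The key steps, in order: (1) Reduce to the affine local situation $X = \Spec A'$ and note we may replace $X$ by a strict henselization or even pass to the complete local ring, invoking Lemma \ref{lemma:inverse_limits} to know a splitting exists after completion; but more cleanly, I would argue directly. (2) Given a splitting $(\phi, E)$ of $\D$ over $Y_A := \Spec A \times_X Y$, first lift the underlying vector bundle $E$ to a vector bundle $E'$ on $Y_{A'}$: this is possible because the obstruction lives in $H^2(Y_A, \Eend(E) \otimes I)$, which vanishes since the geometric fibres of $\pi$ are curves and $\pi$ is cohomologically flat, so relative cohomology above degree one vanishes after shrinking $X$. (3) Having fixed a lift $E'$, the Azumaya algebra $\D|_{Y_{A'}}$ and $\Eend(E')$ are two Azumaya algebras on $Y_{A'}$ restricting to isomorphic algebras on $Y_A$; since both have the same (vanishing) Brauer class — $\D$ splits relatively by Theorem \ref{thm:relative_curve}, hence after possibly shrinking $X$ in the étale topology it splits on $Y_{A'}$, and any splitting differs from $\Eend(E')$ by a line bundle — one concludes $\D|_{Y_{A'}} \cong \Eend(E' \otimes L)$ for some line bundle $L$, and twisting $E'$ by $L$ gives the desired lift of the splitting. (4) Conclude that the lifting criterion holds, hence $\Split \to X$ is formally smooth; combined with local finite presentation this yields smoothness.

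The main obstacle is step (3): controlling the ambiguity in comparing the lifted splitting $\Eend(E')$ with $\D|_{Y_{A'}}$. The cleanest route is to observe that $\Split$ being a $\Pic_{Y/X}$-quasi-torsor (Lemma \ref{lemma:deformation}) means that over any test scheme where $\Split$ is nonempty it becomes a torsor under $\Pic_{Y/X}$; smoothness is étale-local on the source and target, and Theorem \ref{thm:relative_curve} guarantees $\Split$ is étale-locally nonempty over $X$. A torsor under a smooth group stack over $X$ is smooth over $X$ as soon as the group stack is — and $\Pic_{Y/X}$ is smooth over $X$ because $\pi$ is a relative curve (its relative Picard stack is smooth, all deformations of line bundles on a curve being unobstructed since $H^2$ of a curve vanishes). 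So the real content is: (a) $\Split$ is étale-locally a $\Pic_{Y/X}$-torsor over $X$, which follows from Theorem \ref{thm:relative_curve} together with the quasi-torsor structure; (b) $\Pic_{Y/X}/X$ is smooth, which is classical for relative curves. Assembling (a) and (b) gives the lemma, and I would present the argument in this torsor-theoretic form rather than grinding through the cocycle computation of step (2)–(3).
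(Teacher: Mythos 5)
Your closing torsor-theoretic framing is essentially the paper's argument: Lemma \ref{lemma:deformation} records that $\Split$ is a quasi-torsor under $\Pic_{Y/X}$, and the paper then cites the unobstructedness established in the proof of Lemma \ref{lemma:inverse_limits} and the criterion for formal smoothness to conclude. However, your proposal has a genuine circularity. You invoke Theorem \ref{thm:relative_curve} twice — in step (3) to guarantee that $\D$ splits on $Y_{A'}$ after shrinking, and in item (a) to guarantee that $\Split$ is \'etale-locally nonempty over $X$. In the paper, Theorem \ref{thm:relative_curve} is proved \emph{after} and \emph{using} Lemma \ref{lemma:smooth} (``According to Lemma \ref{lemma:smooth}, the morphism of stacks $\Split \to X$ is smooth. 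In particular it has a section \'etale locally.''). So the dependence must run the other way.

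The way the paper avoids this: formal smoothness does not require nonemptiness of fibres, only that any splitting over an Artinian thickening lift along a square-zero extension. The quasi-torsor structure reduces this to unobstructedness of line bundle deformations, which holds because the fibres of $\pi$ are curves (so $H^2$ of the structure sheaf vanishes). Where nonemptiness enters at all — namely on formal fibres, in Lemma \ref{lemma:inverse_limits} — the paper uses the \emph{absolute} Theorem \ref{thm:curve} (any Azumaya algebra on a proper noetherian scheme of dimension $\leq 1$ over an algebraically closed field splits), which is logically prior and independent of Lemma \ref{lemma:smooth}. To repair your argument, replace the appeals to Theorem \ref{thm:relative_curve} with the absolute Theorem \ref{thm:curve} applied to the special fibre (or directly cite Lemma \ref{lemma:inverse_limits}). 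A secondary, minor imprecision: in step (2) you place the obstruction in $H^2(Y_A,\Eend(E)\otimes I)$, which is the obstruction to deforming $E$ as a bare bundle; the obstruction to deforming the pair $(E,\phi)$ as a splitting is governed by the gerbe banded by $\G_m$ and hence lives in $H^2(Y_A,\Oo\otimes I)$. Both vanish for a relative curve, so the conclusion is unaffected, but the quasi-torsor formulation is the cleaner bookkeeping and makes the reduction to $\Pic_{Y/X}$ transparent.
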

\begin{proof}
This is a simple verification of the criterion for formal smoothness. We have already seen that there exists no obstruction to lifting splittings in the curve case in the proof of Lemma \ref{lemma:inverse_limits}. Therefore the structural morphism $\Ssplit \rightarrow X$ is smooth.
\end{proof}

\begin{proof}[Proof of Theorem \ref{thm:relative_curve}]
According to Lemma \ref{lemma:smooth}, the morphism of stacks $\Split \to X$ is smooth. In particular, it has a section \'etale locally on $X$.
\end{proof}

\begin{lemma}\label{lemma:fpuo}
The stack $\Ssplit$ is \'etale locally isomorphic to $\Pic(Y/X)$. Moreover, it is locally of finite presentation, smooth and universally open over the base.
\end{lemma}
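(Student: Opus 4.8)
The plan is to bootstrap from Theorem \ref{thm:relative_curve}, which already tells us that $\Split \to X$ admits a section after passing to an \'etale cover $\{U_i \to X\}$. I would fix such a $U_i$ and the corresponding section, which by definition is a splitting $(\phi, E)$ of $\D$ over $U_i \times_X Y$. The key observation, already recorded in the proof of Lemma \ref{lemma:deformation}, is that $\Split$ is a $\Pic(Y/X)$-quasitorsor: once a single splitting exists, tensoring by line bundles on the fibres of $Y \to X$ acts simply transitively on the groupoid of splittings. More precisely, given the reference splitting $(\phi, E)$ over $U_i \times_X Y$, the assignment $L \mapsto (\phi, E \otimes L)$ — together with the obvious effect on morphisms — defines a morphism of stacks $\Pic(U_i \times_X Y / U_i) \to \Split \times_X U_i$, and the quasitorsor property says precisely that this is an equivalence. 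Since $\Pic(U_i \times_X Y/U_i) = \Pic(Y/X) \times_X U_i$, this exhibits the desired \'etale-local isomorphism $\Split \cong \Pic(Y/X)$.

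I would then transport the stated properties across this equivalence. Being locally of finite presentation, smooth, and universally open are all local in the \'etale (indeed fppf) topology on the base, so it suffices to know them for $\Pic(Y/X) \to X$. For a relative curve $\pi: Y \to X$ that is proper, flat, and cohomologically flat in degree zero, the relative Picard functor is representable by an algebraic stack locally of finite presentation and smooth over $X$ (this is exactly the input from \cite{MR0260746} already invoked in the text, combined with the unobstructedness of line-bundle deformations on curves noted before Lemma \ref{lemma:deformation}); smoothness together with local finite presentation gives flatness, hence universal openness. Alternatively, since Lemma \ref{lemma:smooth} already establishes smoothness of $\Split \to X$ directly, and the algebraicity theorem gives local finite presentation, the only genuinely new content here is universal openness, which follows formally from flatness of a locally finitely presented morphism.

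The main obstacle I anticipate is not any one deep ingredient but rather being careful about the quasitorsor statement at the level of stacks rather than sets: one must check that the action of $\Pic(Y/X)$ on $\Split$ is compatible with the groupoid structure (the morphisms $(\gamma, L')$ in $\Split$ match the isomorphisms of line bundles after twisting) and that the comparison morphism is an equivalence of stacks over $U_i$, not merely a fibrewise bijection. This is where the precise definition of morphisms in $\Split$ — pairs $(\gamma, L)$ with $\gamma: E \to F \otimes L$ — is used in an essential way, and it is worth spelling out that the automorphisms of a splitting $(\phi, E)$ are exactly $H^0$ of units twisted by line bundles, matching $\Aut$ in $\Pic$. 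Everything else is a routine appeal to \'etale-local nature of the listed properties.
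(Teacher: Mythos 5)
Your proposal follows essentially the same route as the paper: invoke Theorem \ref{thm:relative_curve} to obtain a splitting \'etale locally, use the $\Pic$-quasitorsor structure (splittings differ by line bundles) to get $U_i \times_X \Ssplit \cong \Pic(U_i \times_X Y/U_i)$, and transport local finite presentation, smoothness, and universal openness across this equivalence, citing unobstructedness of line-bundle deformations for smoothness and the EGA result that flat and locally of finite presentation implies universally open. The paper cites \cite{MR2223410} (Prop.\ 9.4.17) rather than \cite{MR0260746} for local finite presentation, and is less explicit about checking the quasitorsor claim at the level of stack morphisms, but these are cosmetic differences; your argument is correct and matches the paper's.
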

\begin{proof}
We have seen in Theorem $\ref{thm:relative_curve}$ that every Azumaya algebra splits relatively over $X$. Therefore there exists an \'etale cover $(U_i)$ of $X$, such that $\D$ splits over $U_i \times_X Y$. Since two splittings of an Azumaya algebra are related by line bundles this gives rise to an isomorphism $U_i \times_X \Ssplit \cong \Pic(U_i \times_X Y/U_i)$. 

See Proposition 9.4.17 in \cite{MR2223410} for a proof that $\Pic(Y/X)$ is locally of finite presentation. In \emph{loc. cit.} it is assumed that $X$ is a curve but their proof generalises verbatim to the case of orbicurves. Moreover $\Pic(Y/X)$ is smooth as the deformation theory of line bundles on curves is unobstructed. A flat morphism which is locally of finite presentation is universally open, according to Proposition 2.4.6 in EGA IV.2 (cf. \cite{MR0199181}).
\end{proof}

\subsection{Local equivalence of moduli stacks}\label{equivalence}

Let $X$ be a smooth projective orbicurve. In this subsection we use the BNR correspondence \ref{prop:local-bnr} and Theorem \ref{thm:relative_curve} to show that the moduli stack of Higgs bundles is \'etale locally equivalent to the moduli stack of local systems over the Hitchin base $\A^{(1)}$. This is based on the following observation.

\begin{lemma}[Splitting principle]\label{lemma:splittingprinciple}
Let $S$ be a $k$-scheme locally of finite type and $a\colon S \to \A^{(1)}$ be an $S$-family of spectral curves, such that the Azumaya algebra $\D_X$ has a splitting $P$, when pulled back to $Y^{(1)}_a = Y^{(1)} \times_{\A^{(1)}} S$. Then the equivalence $\delta_P$ from Lemma \ref{lemma:morita} induces an equivalence of the groupoid of rank $n$ local systems on $X$ with characteristic polynomial $a$ and rank $n$ Higgs bundles on $X^{(1)}$ with characteristic polynomial $a$.
\end{lemma}

\begin{proof}
We denote by $\pi\colon Y^{(1)}_a \to X^{(1)} \times S$ the projection of the corresponding spectral curve to $X \times S$, which is a finite morphism. We will use the BNR correspondence for local systems (Prop. \ref{prop:local-bnr}) and the BNR correspondence for Higgs bundles (Thm. \ref{thm:bnrHiggs}). In particular, we would like to relate $\D$-modules $\Ee$ on $Y^{(1)}_a$, satisfying $\pi_*\Ee$ being a locally free sheaf of rank $pn$, with coherent sheaves $\F$, such that $\pi_*\F$ is a locally free sheaf of rank $n$. As in Lemma \ref{lemma:morita} we send such a sheaf $\F$ to $$\Ee= P \otimes \F.$$Since Lemma \ref{lemma:morita} already provides us with an equivalence of quasi-coherent $\D_X$-modules with quasi-coherent sheaves on $Y_a$, we only have to take care of the push-forward condition to conclude the proof. This is an \'etale local property, and hence we may assume that $X$ is a scheme. For every $x \in X \times S$ we replace $X \times S$ by the spectrum of the henselization of the local ring $\Oo_{X\times S,x}$. In particular, we have that the base change of the spectral cover $Y^{(1)}_a$ is the spectrum of a product of local rings (Thm. 4.2 in \cite{MR559531}). This implies that $P$ is a free sheaf of rank $p$. In particular we see that on the level of underlying sheaves we have $$\Ee \cong \bigoplus_{i=0}^{p-1} \F.$$ We may therefore conclude that $\pi_*\Ee$ is locally free of rank $pn$ and characteristic polynomial $a^p$, if and only if $\pi_*\F$ is locally free of rank $n$ and characteristic polynomial $a$.
\end{proof}

As an immediate corollary of this splitting principle we obtain the existence of an isomorphism of Hitchin fibres for local systems and Higgs bundles.

\begin{cor}\label{cor:fibres}
Let $\HDol\colon \Higgs(X^{(1)}) \rightarrow \A^{(1)}$ be the Hitchin fibration mapping a Higgs bundle to the characteristic polynomial of its Higgs field, and $\HdR\colon \Loc \rightarrow \A^{(1)}$ the Hitchin fibration of Definition \ref{defi:HdR}. Then we have that for every $a \in \A^{(1)}$ there exists a non-canonical isomorphism of stacks $\invHDol(a) \cong \invHdR(a)$.
\end{cor} 

\begin{proof}
This follows from applying the Splitting Principle \ref{lemma:splittingprinciple} to the morphism $a\colon \Spec k \to \A^{(1)}$. The existence of a splitting of an Azumaya algebra on a non-necessarily smooth but projective orbicurve, is guaranteed by Theorem \ref{thm:curve}.
\end{proof}

Let $X$ be a complete smooth orbicurve over an algebraically closed field $k$ of characteristic $p > 0$. We denote by $\HDol\colon \Higgs(X^{(1)}) \rightarrow \A^{(1)}$ the Hitchin fibration mapping a Higgs bundle to the characteristic polynomial of its Higgs field, and $\HdR\colon \Loc \rightarrow \A^{(1)}$ the twisted Hitchin fibration of Definition \ref{defi:HdR}. These two morphisms induce the structure of an $\A^{(1)}$-stack on their domains. 

\begin{thm}\label{thm:main}
Let us denote by $\Split$ the stack of splittings of the Azumaya algebra $\D_X$ relative to the family of spectral curves $Y^{(1)}/\A^{(1)}$ (see the sections \ref{local-bnr} and \ref{splitting}). There exists a canonical isomorphism
$$\Delta\colon \Split \times_{\A^{(1)}} \Loc \cong \Split \times_{\A^{(1)}} \Higgs. $$
Moreover the $\A^{(1)}$-stacks $\Loc(X)$ and $\Higgs(X^{(1)})$ are \'etale locally equivalent, that is, there exists an \'etale cover $\{U_i \to \A^{(1)}\}_{i\in I}$ and isomorphisms of $U_i$-stacks
$$\delta_S\colon U_i \times_{\A^{(1)}} \Higgs(X^{(1)}) \cong U_i \times_{\A^{(1)}} \Loc(X).$$
\end{thm}
\begin{proof}
The first part of this theorem follows from the Splitting Principle (Lemma \ref{lemma:splittingprinciple}). We remind the reader that it states that every $S$-family of spectral cover $a\colon S \to \A^{(1)}$, which is endowed with a splitting $P$ of $\D_X$ pulled back to $S \times_{\A^{(1)}} Y^{(1)}$, gives rise to an identification of $S$-families of Higgs bundles on $X^{(1)}$ with spectral cover $Y^{(1)}_a$, and $S$-families of local systems on $X$ with spectral curve $X$.

Theorem \ref{thm:relative_curve} implies that we can choose an \'etale cover $(U_i)_{i\in I}$ and splittings $S_i$ of $\D_{X}$ on $U_i \times_{\A^{(1)}} Y^{(1)}$. We obtain isomorphisms $$U_i \times_{\A^{(1)}} \Higgs(X^{(1)}) \cong U_i \times_{\A^{(1)}} \Loc(X),$$
and thus conclude the proof.\end{proof}

\subsection{Stability}\label{localstability}

We investigate the interaction of the local equivalence of moduli stacks in Theorem \ref{thm:main} with stability. Unmindful choice of a splitting in the proof of this theorem will lead to the degree of the underlying bundles to be scaled and shifted. If the spectral cover has several components these shifts might differ between the components, which could certainly mean that stability is not preserved. Nonetheless, it is possible to single out a connected component $\Split^0 \subset \Split$ of good splittings, where the degree of the underlying bundles and their Higgs subbundles will only be scaled. Our discussion will only use basic properties for the degree function of coherent sheaves. We will take a little detour first, establishing these results in the context of possibly singular, proper DM-stacks of dimension $1$.

\subsubsection*{Degree for singular orbicurves}

This brief paragraph is concerned with establishing a Riemann-Roch theorem for possibly singular orbicurves. Everything stated here could be easily deduced from \cite{MR1710187}, but the $1$-dimensional theory allows for a very elementary account, which we recall here.

\begin{defi}\label{defi:rank}
Let $Z$ be an orbicurve. For a coherent sheaf $\F$ on $Z$ of constant rank, which \'etale locally has a finite resolution $(V^{\bullet})$ by finite rank locally free sheaves, we define its \emph{rank} to be the integer-valued function $\rnk \F(x) = \sum_{i=0}^\infty \rnk V^i|_x$, where $x$ is a geometric point of $Z$.
\end{defi}

The condition that $\F$ admits \'etale locally a finite resolution by finite rank locally free sheaves is tantamount to $\F$ being quasi-isomorphic to a perfect complex in $D^b(Z)$. Since the Grothendieck group of perfect complexes $K_0(\Perf(Z))$ is isomorphic to the Grothendieck group of vector bundles $K_0(Z)$, we see that $\rnk \F$ is well-defined and locally constant.

The following definition produces the classical notion of degree for smooth complete curves, as is implied by the Riemann-Roch Theorem. Hence, this definition extends the degree function to possibly singular orbicurves.

\begin{defi}\label{defi:deg}
Let $Z$ be an orbicurve. For a coherent sheaf $\F$ on $Z$, which \'etale locally has a finite resolution by finite rank locally free sheaves, we define its \emph{degree} to be the integer $\deg \F = \deg_Z \F = \chi(Z,\F) - m\chi(Z,\Oo_Z) = \sum_{i = 0}^{1} (-1)^i (\dim H^i(Z,\F) - \dim H^i(Z,\Oo^{m}))$.
\end{defi}

The alternating sum above is finite, by virtue of cohomology vanishing in degree higher than $\dim X$. We will now investigate the behaviour of $\deg$ with respect to tensor products. We will denote by $K'_0(Z)$ the Grothendieck group of coherent sheaves on $Z$. If $Z' \subset Z$ is a closed subset, we denote by $K'_0(Z,Z')$ the Grothendieck group of coherent sheaves on $Z$, set-theoretically supported on $Z'$.

\begin{lemma}\label{lemma:deg_formula}
Suppose that $Z$ and $\F$ satisfy the assumptions of Definition \ref{defi:deg}, and let $V$ be a locally free sheaf of rank $\ell = \rk V$ on $Z$. Then, we have $\ell\deg(\F) = \chi(Z,\F\otimes V) - \chi(V^{\oplus m})$. In particular, if $V$ is a line bundle, we see that $\deg(\F) = \chi(Z,\F\otimes V) - \chi(V^{\oplus m})$.
\end{lemma}
\begin{proof}
The last assertion is a special case of the first. We choose an open subset $U \subset Z$ contained in the schematic part of $Z$, and an isomorphism $\F|_U \cong \Oo_Z^{m}$. Moreover, we may assume that $X \setminus U$ is of dimension $0$. Since we have an exact sequence
$$K'_0(X,X\setminus U) \rightarrow K'_0(X) \rightarrow K'_0(U),$$
and the element $[\F] - m[\Oo_Z]$ is sent to $0$ in $K'_0(U)$, we see that it lies in the image of $K'_0(X,X \setminus U)$. In particular, the set-theoretic support of this $K$-theory class is $0$-dimensional. This implies that $\chi(Z,\F\otimes V) - m\chi(V) = [V]\otimes ([\F] - m[\Oo_Z]) = [\Oo_Z^{\ell}] \otimes ([\F] - m[\Oo_Z]) = \ell \deg \F$.
\end{proof}

\begin{cor}\label{cor:deg_formula}
If $Z$ satisfies the conditions of Definition \ref{defi:deg}, and $L$, $M \in \Pic(Z)$, we have $\deg(L \otimes M) = \deg L + \deg M$. If $V$ is a locally free sheaf of rank $\ell$, then $\deg(L \otimes V) = \ell \deg L + \deg V$.
\end{cor}
\begin{proof}
The first formula is a special case of the second. We compute $\deg(L \otimes V)$ as 
$$\chi(L \otimes V) - \chi(\Oo_Z^{\ell}) = (\chi(L \otimes V) - \ell\chi(L)) + (\ell\chi(L) - \ell\chi(\Oo_Z)) = \deg(V) + \ell \deg L.$$
Where we identified the left hand term in brackets with $\deg V$ using Lemma \ref{lemma:deg_formula}, and the right hand term with $\ell \deg L$, by virtue of Definition \ref{defi:deg}.
\end{proof}

\begin{rmk}\label{rmk:deg}
Let $\Z \rightarrow S$ be a flat morphism of Deligne-Mumford stacks, fibrewise satisfying the assumption of Definition \ref{defi:deg}, and choose a coherent $S$-flat sheaf $\F$ on $\Z$, which satisfies the assumption of Definition \ref{defi:deg} fibrewise. We assume that $\Z \rightarrow S$ can be factored as $\Z\rightarrow \Z' \rightarrow Z' \rightarrow S$, where $f\colon \Z\rightarrow \Z'$ is a finite and flat map, which guarantees $\chi(\Z_s,\F) = \chi(\Z'_s,f_*\F)$, $\Z \rightarrow Z'$ is the map to the coarse moduli space, and the map $Z \rightarrow S$ is projective and flat. It then follows from \cite[Lemma 4.2]{olsson2003quot} that the function $s \mapsto \chi(\Z_s,\F)$ is locally constant. Hence we conclude:
\begin{itemize}
\item[(a)] The function $s \mapsto \deg_{\Z_s}\F$ is locally constant on $S$.
\item[(b)] We define the arithmetic genus $g(Z)$ to be $1-\chi(Z,\Oo_Z)$. As above, the function $s \mapsto g(\Z_s)$ is locally constant on $S$.
\end{itemize}
\end{rmk}

\subsubsection*{The case of spectral covers}

In order to apply degree calculus introduced in the preceding paragraph to spectral covers, we need to ensure that the coherent sheaves which arise via the BNR correspondence admit a finite resolution by finite rank locally free sheaves.

\begin{lemma}
Let $\pi\colon Y_a \rightarrow X$ be a spectral cover for a smooth orbicurve $X$, and $\F$ a coherent sheaf on $Y_a$, such that $\pi_*\F$ is a locally free sheaf on $X$. Then, $\F$ admits a finite resolution by finite rank locally free sheaves. 
\end{lemma}

\begin{proof}
Since $X$ is smooth, every finite rank locally free sheaf is a maximal Cohen-Macaulay sheaf. The morphism $\pi\colon Y_a \rightarrow X$ is finite, hence $\F$ is a maximal Cohen-Macaulay sheaf as well. This implies that as an object in $D^b(Y_a)$ we have $$\mathbb{D}\F = R\Hhom(\F,\omega_{Y_a}) \simeq \F.$$ The spectral cover $Y_a$ is a locally complete intersection, since it is of dimension $1$, and is defined as the vanishing scheme of the section 
$\lambda^n + a_{n-1} \lambda^{n-1} + \cdots + a_0 \in H^0(T^*X,(\Omega_X)^{\otimes n})$. Therefore we have that the canonical sheaf $\omega_{Y_a}$ is invertible, and we deduce that $\F$ is a dualisable object of $D^b(Y_a)$. This implies that it is a perfect complex, that is, \'etale locally has a finite resolution by finite rank locally free sheaves.
\end{proof}

\begin{cor}
Let $X$ be a smooth orbicuve, and let $\Ee$ be a coherent sheaf on a spectral cover $Y_a$, corresponding to a Higgs bundle $(E,\theta)$ on $X$. Then, $\Ee$ admits \'etale locally a finite resolution by finite rank locally free sheaves. Similarly, if $\F$ denotes the $\D$-module on $Y_a^{(1)}$ corresponding to a flat connection $(F,\nabla)$ on $X$, then $\F$ admits \'etale locally a finite resolution by finite rank locally free sheaves.
\end{cor}

We also need to ensure that Remark \ref{rmk:deg}, that is, the fact that $s \mapsto \chi(\Z_s,\F|_{\Z_s})$ is a locally constant function under suitable conditions, can be applied to the family of spectral covers $Y \rightarrow \A$. We denote by $X'$ the coarse moduli space of $X$, and can factor $Y \rightarrow S$ as
$Y \rightarrow X \times S \rightarrow X' \rightarrow S,$ 
where $X' \times S \rightarrow S$ is projective and flat, and $Y \rightarrow X \times S$ is finite and flat.

\subsubsection*{Computing the degree change}

Using the previously obtained identities, we are ready to compute how the choice of a splitting affects the respective degrees of Higgs bundles and local systems.

\begin{lemma}\label{lemma:deg}
Let $a \in \A^{(1)}$ be the characteristic polynomial of a spectral curve $Y^{(1)}_a \to X^{(1)}$. Given a splitting $S$ of $\D_X$ on $Y^{(1)}_a$ the induced isomorphism of Hitchin fibres $\invHDol(a) \to\invHdR(a)$ sends a degree $d$ Higgs bundle on $X^{(1)}$ to a degree $$pd - (1-p)(1-h)n + \deg_{Y_a^{(1)}} S$$ local system on $X$, where $h$ denotes the genus of $X$.
\end{lemma}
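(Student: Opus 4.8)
The plan is to translate both sides of the correspondence into line bundles and bundles on the smooth spectral curve $Y_a$, and then to run two Riemann--Roch computations: one for the Frobenius $\Fr_X\colon X\to X^{(1)}$, and one for the spectral cover $\pi_a\colon Y_a\to X^{(1)}$.

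First I would fix the dictionary. As $Y_a$ is smooth, a pure rank-one sheaf on it is a line bundle, so by Theorem~\ref{thm:bnrHiggs} the degree-$d$ Higgs bundle $(E,\theta)$ corresponds to a line bundle $L$ on $Y_a$ with $E=\pi_{a,*}L$. The Azumaya algebra $\D_X|_{Y_a}$ has rank $p^2$ over $\Oo_{Y_a}$, so the splitting $S$ is the datum of a rank-$p$ bundle $M$ on $Y_a$ together with an isomorphism $\Eend(M)\cong\D_X|_{Y_a}$, and $\deg S$ is by definition $\deg_{Y_a}M$. Under the equivalence of Theorem~\ref{thm:main} the Higgs bundle is sent to the $\D_X$-module $\Ee=L\otimes_{\Oo_{Y_a}}M$, and by Proposition~\ref{prop:bnr3} the associated local system $(E',\nabla)$ satisfies $\Fr_{X,*}E'=\pi_{a,*}\Ee=\pi_{a,*}(L\otimes M)$ as coherent sheaves on $X^{(1)}$. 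Hence it remains to compare $\deg_X E'$ with $d=\deg_{X^{(1)}}\pi_{a,*}L$.

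Next I would record two numerical inputs, both instances of the equality of Euler characteristics along a finite morphism combined with Riemann--Roch on curves. \textbf{(i)} Since $\Fr_X$ is finite, $\Fr_{X,*}$ is exact and preserves cohomology, so $\chi(X,E')=\chi(X^{(1)},\Fr_{X,*}E')$; using $g(X^{(1)})=g(X)=h$ and $\rk\Fr_{X,*}E'=pn$ this gives
$$\deg_{X^{(1)}}\Fr_{X,*}E'=\deg_X E'+(1-p)(1-h)n.$$
\textbf{(ii)} For the finite flat degree-$n$ morphism $\pi_a$ between smooth curves and any bundle $W$ on $Y_a$ one gets $\deg_{X^{(1)}}\pi_{a,*}W=\deg_{Y_a}W+(\rk W)\bigl((1-g(Y_a))-n(1-h)\bigr)$; applying this to $W=\Oo_{Y_a}$ and using $\pi_{a,*}\Oo_{Y_a}=\bigoplus_{i=0}^{n-1}\Theta_{X^{(1)}}^{\otimes i}$ (the local computation preceding Lemma~\ref{lemma:pfcf}, so that $\deg\pi_{a,*}\Oo_{Y_a}=(1-h)n(n-1)$) forces $1-g(Y_a)=(1-h)n^2$, whence
$$\deg_{X^{(1)}}\pi_{a,*}W=\deg_{Y_a}W+(\rk W)(1-h)n(n-1).$$

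Finally I would combine these. Input (ii) with $W=L$ gives $d=\deg_{Y_a}L+(1-h)n(n-1)$, and with $W=L\otimes M$ (rank $p$, and $\deg_{Y_a}(L\otimes M)=p\deg_{Y_a}L+\deg S$) gives
$$\deg_{X^{(1)}}\Fr_{X,*}E'=p\deg_{Y_a}L+\deg S+p(1-h)n(n-1)=pd+\deg S,$$
the spectral-curve genus terms cancelling. Substituting into (i) yields $\deg_X E'=pd+\deg S-(1-p)(1-h)n$, which is the asserted formula. The points that require care are keeping the BNR dictionary straight---in particular that $E$ and $\Fr_{X,*}E'$ are push-forwards along the \emph{same} cover $\pi_a$, differing only by the twist with $M$---and the two Riemann--Roch bookkeeping steps, including the genus of $Y_a$; if $Y_a$ is disconnected all degrees are read as total degrees and the argument is unchanged.
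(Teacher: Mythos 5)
Your proof is correct and follows essentially the same strategy as the paper: both rest on the two Euler--characteristic identities $\chi(E')=\chi(\Fr_{X,*}E')$ and $\chi(\pi_{a,*}W)=\chi(W)$ combined with Riemann--Roch, applied to the BNR dictionary $\Fr_{X,*}E'=\pi_{a,*}(L\otimes S)$. The only cosmetic difference is that you compute $g(Y_a)$ explicitly and observe the cancellation, whereas the paper keeps $g$ as a symbol and lets it drop out implicitly.
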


\begin{proof}
Given a local system $(E,\nabla)$ the first step is to push it forward along the Frobenius morphism $\Fr\colon X \to X^{(1)}$. This being a finite morphism we obtain $\chi(X,E) = \chi(X^{(1)},\Fr_*E)$, and using the definition of degree (\ref{defi:deg}) and genus (\ref{rmk:deg}(b)), deduce the equality
\begin{equation}\label{eqn:Fr}
\deg_X E + n(1 - h) = \deg_{X^{(1)}} \Fr_*E + pn(1-h). 
\end{equation}
In particular we have $\deg_{X^{(1)}} \Fr_* E = \deg_X E + n(1-p)(1-h)$. If $\pi\colon Y^{(1)}_a \to X^{(1)}$ denotes the finite morphism from the spectral cover to $X$, we can write $\Fr_*E = \pi_*\Ee = \pi_*(L \otimes S)$, where $L$ is a coherent sheaf on $Y^{(1)}_a$, and we have $\delta_S(\Ee) = L$. Recall that this means that we can write $\Ee$ as $L \otimes S$, since $S$ is a splitting of $\D_X$ on $Y^{(1)}_a$. Using again the identity $\chi(X^{(1)},\pi_*(L \otimes S)) = \chi(Y^{(1)}_a,L \otimes S)$ and Corollary \ref{cor:deg_formula}, we obtain
\begin{equation}\label{eqn:SL} \deg_{X^{(1)}} \Fr_* E + pn(1-h) = p\deg_{Y^{(1)}_a} L + \deg_{Y^{(1)}_a} S + p(1-g). \end{equation}
Here we use $g$ to denote the arithmetic genus of the spectral cover $Y^{(1)}_a$. On the other hand, we compute for the Higgs bundle corresponding to $(E,\nabla)$ and $S$, that is, for $\pi_*L$ the Euler characteristic
\begin{equation}\label{eqn:L} \deg_{X^{(1)}} \pi_*L + n(1-h) = \chi(X^{(1)},\pi_*L) = \chi(Y^{(1)}_a,L) = \deg_{Y^{(1)}_a} L + (1 - g).\end{equation}
From equations (\ref{eqn:SL}) and (\ref{eqn:L}) we conclude that $p\deg_{X^{(1)}} \pi_*L + \deg_{X^{(1)}} S = \deg_{X^{(1)}} \pi_*(L \otimes S) = \deg_{X^{(1)}} Fr_*E$. Equation (\ref{eqn:Fr}) implies now that $\deg_X E = p \deg_{X^{(1)}} \pi_*L - (1-p)(1-h)n + \deg_{Y^{(1)}_a} S$.
\end{proof}

\begin{cor}\label{cor:deg_shift}
Fix $a \in \A^{(1)}$, and $S \in \Split(Y^{(1)}_a)$ a splitting of $\D_X$ on the spectral curve $Y^{(1)}_a$. The induced map $\delta_S^{-1}\colon \invHDol(a) \to\invHdR(a)$ fits into a commutative diagram
\[
\xymatrix{
\invHDol(a) \ar[r]^{\delta_S^{-1}} \ar[d]_{\deg_{X^{(1)}}} & \invHdR(a) \ar[d]^{\deg_{X}}\\
\mathbb{Z} \ar[r]^{x \mapsto px + c} & \mathbb{Z},
}
\]
with the horizontal arrow being an affine function of slope $p$ with constant term $c$ depending on $\deg_{Y^{(1)}_a} S$, $n$, $h = g(X)$, and $p$.
\end{cor}

\subsubsection*{Selecting good splittings}

\begin{lemma}\label{lemma:Pic_surjective}
A connected component $\C$ of $\Pic(Y^{(1)}/\A^{(1)})$ maps surjectively to $\A^{(1)}$ if and only if the fibre over $0$, $\C_{0}\neq \emptyset$ is non-empty. 
\end{lemma}
\begin{proof}
The morphism $\Pic(Y^{(1)}/\A^{(1)})$ is flat and locally of finite presentation. Therefore it is a universally open map. This shows that the image of every connected component of $\Pic(Y^{(1)}/\A^{(1)})$ is an open subset of $\A^{(1)}$. Moreover, $\A^{(1)}$ and $Y^{(1)}$ are endowed with a $\G_m$-action (with positive weights), hence $\Pic(Y^{(1)}/\A^{(1)})$ inherits an action as well; and by continuity also the connected component $\C$ and its image inside $\A^{(1)}$. Since $\G_m$ acts with positive weights on $\A^{(1)}$, the only $\G_m$-equivariant open subset containing $0$ is $\A^{(1)}$. 
\end{proof}

\begin{cor}\label{cor:Pic_d_surjective}
The union of connected components $\Pic^{nd}$ of $\Pic(Y^{(1)}/\A^{(1)})$, consisting of fibrewise degree $nd$ line bundles, maps surjectively to $\A^{(1)}$.
\end{cor}

\begin{proof}
According to Lemma \ref{lemma:Pic_surjective} it suffices to show that the spectral curve $Y^{(1)}_0$ supports a line bundle $L$ of degree $nd$, for every integer $d \in \mathbb{Z}$. Let $M$ be a degree $d$ line bundle on $X^{(1)}$. We recall the notation $\pi\colon Y_0^{(1)} \rightarrow X$, and will show that $\pi^{*}M$ restricts to a line bundle of degree $nd$ on $Y^{(1)}_0$.
Indeed, we find
$\deg L = \chi(\pi_*\pi^*M) - \chi(\pi_* \Oo) = \chi(M \otimes [\pi_*\Oo]) = \chi(M\otimes [\Oo_{X^{(1)}} \oplus \Omega^1_{X^{(1)}} \oplus \cdots \oplus (\Omega^1_{X^{(1)}})^{\otimes n}]^{\vee}) - \chi([\Oo_{X^{(1)}} \oplus \Omega^1_{X^{(1)}} \oplus \cdots \oplus (\Omega^1_{X^{(1)}})^{\otimes n}]^{\vee}) = n\deg M = nd.$
\end{proof}

In fact, it is not possible to do better. If $L$ is a line bundle on $Y^{(1)}_0$, then its degree is divisible by $n$. This is a consequence of the following two lemmas.

\begin{lemma}\label{lemma:first}
Let $Z$ be an orbicurve, and $L$ a line bundle on $Z$. We denote by $Z_n$ the $n$-th order infinitesimal neighbourhood of the zero section $i\colon Z\hookrightarrow \mathrm{Tot}\; L^{\vee} = \Spec_Z \Sym L$. We have $Z_0 = Z$. For a vector bundle $M$ on $Z_n$, we denote by $i_{n-1}\colon Z_{n-1}\hookrightarrow Z_n$ the canonical inclusion. We have a short exact sequence
$$0 \rightarrow L^{\otimes (n-1)} \otimes i_*i^*M \rightarrow M \rightarrow (i_{n-1})_*i_{n-1}^*M \rightarrow 0.$$
\end{lemma}

\begin{proof}
By definition, $Z_n = \Spec_Z \bigoplus_{i = 0}^{n-1}L^{\otimes i}$, where we the product structure of the algebra $\bigoplus_{i = 0}^{n-1}L^{\otimes i}$ satisfies the relation $\alpha_1 \cdot \alpha_2 = 0$ for $\alpha_j$ local sections of $L^{\otimes i_j})$ with $i_1 + i_2 \geq n$. 

Let us denote the kernel of the natural map $M \rightarrow (i_{n-1})_*i_{n-1}^*M$ by $K$, it remains to show that $K \cong L^{\otimes (n-1)}\otimes M$.

The $\Oo_{Z_n}$-module structure on $M$ induces therefore a map
$$L^{\otimes (n-1)} \otimes M \rightarrow M,$$
whose composition with $M \rightarrow (i_{n-1})_*i_{n-1}^*M$ vanishes, since $L^{\otimes(n-1)}$ acts as $0$ on $\Oo_{Z_{n-1}}$. Therefore we obtain a map $f\colon L^{\otimes (n-1)} \otimes M \rightarrow K$. Moreover, since $f(\alpha x) = 0$ for every local section $\alpha$ of $L^{\otimes i}$ with $i \geq 1$, we see that $f$ factors through a map
\begin{equation}\label{eqn:first}i_*i^*(L^{\otimes (n-1)} \otimes M) \rightarrow K.\end{equation}
It remains to show that this is an isomorphism of sheaves. This is a local problem, and we may therefore choose an \'etale covering, with respect to which $M$ and $L$ can be trivialised. Henceforth we may assume that $Z = \Spec A$, $L = A$, and $M$ a free $A$-module. In particular we have $Z_n = \Spec A[t]/(t^n)$.

The map \eqref{eqn:first} is now the canonical morphism $$t^{n-1}M \rightarrow \ker(M \rightarrow M \otimes_{A[t]/(t^n)} A[t]/(t^{n-1})).$$ This is an isomorphism if $M$ is free.
\end{proof}

\begin{lemma}\label{lemma:second}
Let $Z$, $L$, and $M$ be as in Lemma \ref{lemma:first}, where we denote the rank of $M$ by $k$. Then, we have $\deg_{Z_n}M = n\deg_Z i^*M$, and with respect to the projection $\pi\colon Z_n \rightarrow Z$ we have $\deg_Z \pi_*M = n\deg_Zi^*M + k\frac{n(n-1)}{2}\deg_ZL.$
\end{lemma}

\begin{proof}
We will deduce these two statements from Lemma \ref{lemma:first}. With the help of the aforementioned lemma, and induction, we deduce that 
$\chi(Z_n,M) = \sum_{i = 0}^{n-1}\chi(Z,i^*M\otimes L^{\otimes i}).$ In particular we see that
$$\deg_{Z_n}M = \chi(Z_n,M) - \chi(Z_n,\Oo_{Z_n}^k)=\sum_{i = 0}^{n-1}(\chi(Z,i^*M\otimes L^{\otimes i})- \chi(Z,\Oo_{Z}^{\oplus k}L^{\otimes^i})) = n\deg_X i^*M,$$
where we applied Lemma \ref{lemma:deg_formula} to obtain the last equality.

The second assertion follows from the first, and a second application of Lemma \ref{lemma:first}:
$$\deg_Z \pi_*M = \chi(Z,\pi_*M) - \chi(Z,\Oo_Z^{kn}) = \chi(Z_n,M) - \chi(Z_n,\Oo_{Z_n}^k) + \chi(Z_n,\Oo_{Z_n}^k) - \chi(Z,\Oo_{Z}^{kn})$$ 
and we may continue to simplify this expression as
$$\deg_{Z_n}M + k\sum_{i = 0}^{n-1}\deg_Z L = n\deg_Z i^*M + k\frac{n(n-1)}{2}\deg_Z L.$$
This concludes the proof.
\end{proof}

\begin{lemma}\label{lemma:Split_surjective}
Consider the union of connected components $\Split_d$ of $\Split$, given by splittings of fibrewise degree $d$ (in the sense of vector bundles on the spectral covers $Y^{(1)}_a$). Then, the map $\Split_d \rightarrow \A^{(1)}$ is surjective, if and only if $Y^{(1)}_0$ supports a splitting of degree $d$.
\end{lemma}

\begin{proof}
We will imitate the proof of Lemma \ref{lemma:Pic_surjective}. We observe as above that the structure map $\alpha\colon \Split \rightarrow \A^{(1)}$ is flat and locally of finite presentation, therefore it is open. As a next step we have to show that the image $\alpha(\Split_d) \subset \A^{(1)}$ is a $\G_m$-invariant open subset. The assertion follows then, since $0 \in \alpha(\Split_d)$ implies that $\alpha(\Split_d) = \A^{(1)}$, as consequence of the $\G_m$-invariance.

Unlike the previous case, $\G_m$-invariance is not automatic in our case, since there is no reason, why $\Split$ should inherit an action by $\G_m$. Instead we will show that for a geometric point $a \in \alpha(\Split_d)(k)$, we also have an inclusion of the entire orbit $\G_m(k) \cdot a \subset \alpha(\Split_d)$.

Indeed, pulling back the family of spectral curves $Y^{(1)} \rightarrow \A^{(1)}$ via the map $\G_m \rightarrow \G_m \cdot a \rightarrow \A^{(1)}$, we obtain a trivial family of curves on $\G_m$ (by means of the $\G_m$-action on $Y^{(1)}$). However, we point out that the isomorphism class of the Azumaya algebra $\D$ is not necessarily constant on this family.

If $b \in \G_m(k) \cdot a$, and $Y_b^{(1)} \cong Y^{(1)}_a$ supports a splitting $M$ of degree $c$, then every other splitting is of the form $M \otimes L$, where $L$ is a line bundle on $Y^{(1)}_a$. Therefore, the possible degree of a splitting $M$ on $Y_b^{(1)}$ is an integer of the form $kd + c$, where $k\in \mathbb{N}$ depends only on $Y^{(1)}_a$. We have seen in Corollary \ref{cor:Pic_d_surjective} that for every spectral curve $Y_b^{(1)}$ we have a subgroup $n\mathbb{Z} \subset \pi_0(\Pic(Y_b^{(1)}))$. This implies that $k|n$, in particular $k \neq 0$.

Therefore we obtain for every $b \in \G_m(k)$ an arithmetic progression $\{kd + c | d \in \mathbb{Z}\}$, which is equal to the set $\{\deg_{Y_b^{(1)}} S|S \in \Split(\D/b)\}$, and we have to show that this subset is independent of the geometric point $b$. Let $\{kd + e | d \in \mathbb{Z}\}$ be this arithmetic progression for the point $a$.

By smoothness of the map $\Split \rightarrow \A^{(1)}$, every splitting can be extended to an \'etale neighbourhood. We choose such neighbourhoods and extensions of splittings for $Y_a^{(1)}$ and $Y_b^{(1)}$. For any geometric point $p \in \G_m(k) \cdot $ factoring through the fibre product of the two neighbourhoods, we thus see that $p$ supports an extension of degree $c$ and $e$. This implies $c = kd + e$, and thus finishes the proof.
\end{proof}

\begin{defi}\label{defi:Split0}
We define $\Split^0(\D/\A^{(1)}) = \Split_{(1-p)(1-h)n}(\D/\A^{(1)})$, and refer to sections thereof as \emph{good splittings}.
\end{defi}

Lemma \ref{lemma:Split_surjective} applies to show that the map $\Split^0 \rightarrow \A^{(1)}$ is surjective. The details are contained in the proof of the following corollary.

\begin{cor}\label{cor:Split0}
The morphism $\Split^0(\D/\A^{(1)}) \rightarrow \A^{(1)}$ is surjective.
\end{cor}

\begin{proof}
As we have seen in Lemma \ref{lemma:Split_surjective}, this assertion is equivalent to showing that the spectral cover $Y_0^{(1)}$ admits a splitting of $\D$ of degree $(1-p)(1-h)n$. 

We now choose a splitting $S$ of $\D$ on $Y^{(1)}_0$ which restricts to the splitting $S_0$ corresponding to the trivial connection $(\Oo_X,d)$ of rank $1$. We denote the flat connection corresponding to $S$ by $(E,\nabla)$. We claim that $\deg_X E = p(1-h)n(n-1)$, which will conclude the proof of the assertion, because with respect to the chosen splitting $S$, $(E,\nabla)$ corresponds to the Higgs bundle $\pi_*\Oo_{Y^{(1)}_0}$ considered earlier, of degree $(1-h)n(n-1)$.

We can apply Lemma \ref{lemma:second} to compute $\deg_{Y_0^{(1)}}S = n\deg_{X^{(1)}}S_0$, where $S_0$ denotes the splitting on $X^{(1)}$ induced by the tautological connection $(\Oo_X,d)$. By Lemma \ref{lemma:deg} we have $\deg_{X^{(1)}}S_0 = (1-p)(1-h)$, which yields $\deg_{Y_0^{(1)}}S = n(1-p)(1-h)$.
\end{proof}

Next we show that splittings of $\D$ which belong to $\Split^0$ preserve (semi)stability.

\begin{lemma}\label{lemma:p}
Let $U \to \A^{(1)}$ be an \'etale morphism and assume that there is a splitting $S \in \Split^0(\D/U)$. Then the induced morphism $$\delta_S^{-1}\colon U \times_{\A^{(1)}} \Higgs(X^{(1)}) \to U \times_{\A^{(1)}} \Loc(X)$$ changes the degree of the Higgs bundles, and Higgs subbundles, by multiplication with $p$. In particular, it preserves the notion of (semi)stability.
\end{lemma}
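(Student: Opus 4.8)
The plan is to unwind the equivalence of Theorem \ref{thm:main} one functor at a time, track its effect on sub-objects, and then read off the behaviour of ranks and degrees from Riemann--Roch, using crucially that $S$ lies in the distinguished component $\Split^0$ so that the degree shift vanishes. \textbf{Reduction to a point.} The map in question is the restriction over $U$ of the isomorphism $\Split\times_{\A^{(1)}}\Higgs\cong\Split\times_{\A^{(1)}}\Loc$, which the chosen $S\in\Split^0(\D/U)$ turns into a $U$-isomorphism $\Phi\colon U\times_{\A^{(1)}}\Higgs(X^{(1)})\to U\times_{\A^{(1)}}\Loc(X)$, hence over a geometric point $a\in U$ into an equivalence $\Phi_a\colon\chiDol^{-1}(a)\xrightarrow{\sim}\chidR^{-1}(a)$. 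Since the statement "changes the degree of subbundles by multiplication with $p$" and (semi)stability are fibrewise conditions, and the degree shift is a discrete invariant, it is enough to analyse $\Phi_a$ on one Higgs bundle $(E,\theta)$ with spectral curve $Y_a$ and on its $\theta$-invariant subbundles.

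\textbf{Dictionary of sub-objects.} As in the proofs of Proposition \ref{prop:bnr3} and Theorem \ref{thm:main}, $\Phi_a$ is a composite of exact equivalences of abelian categories: (i) the BNR equivalence of Theorem \ref{thm:bnrHiggs} between pure coherent sheaves on $Y_a$ and Higgs bundles with spectral curve $Y_a$, $L\leftrightarrow(\pi_{a,*}L,\cdot)$; (ii) the Morita equivalence attached to the splitting, $L\mapsto S\otimes_{\Oo_{Y_a}}L$, with inverse $\Hhom_{\D}(S,-)$, identifying coherent sheaves on $Y_a$ with $\D_X|_{Y_a}$-modules; (iii) the finite push-forward $\pi_{a,*}$, identifying $\D_X|_{Y_a}$-modules with $\D_X$-modules supported on $Y_a$; (iv) the equivalence $D_X$-$\Mod\cong\D_X$-$\Mod$ of \cite{Bezrukavnikov:mz}, under which a local system $(E',\nabla)$ goes to $\Fr_*E'$. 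Each functor is exact, so sub-objects correspond; moreover purity of a sheaf on $Y_a$ is inherited by subsheaves and preserved by tensoring with the locally free $S$, and $\pi_{a,*}$ of a pure sheaf is locally free on $X^{(1)}$ (as $\pi_a$ is finite flat and $X^{(1)}$ a smooth curve), so a \emph{saturated} $\theta$-invariant subbundle $E_1\subseteq E$ (these suffice, the maximal destabilising subsheaf being saturated) corresponds to a pure submodule $L_1\subseteq L$ with pure quotient, which corresponds to a $\nabla$-invariant subbundle $E_1'\subseteq E'$, all fitting into $E_1=\pi_{a,*}L_1$ and $\Fr_*E_1'=\pi_{a,*}(S\otimes L_1)$.

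\textbf{Ranks and degrees.} As $S$ is locally free of rank $p$ and $\Fr$ has degree $p$, $\rk E_1'=\tfrac1p\rk\pi_{a,*}(S\otimes L_1)=\rk\pi_{a,*}L_1=\rk E_1$; call it $n_1$. By Riemann--Roch on $X$, $\deg E_1=\chi(L_1)-n_1(1-h)$ and $\deg E_1'=\chi(\Fr_*E_1')-n_1(1-h)=\chi(S\otimes L_1)-n_1(1-h)$, whence
\[
\deg E_1'-p\deg E_1=\bigl(\chi(S\otimes L_1)-p\,\chi(L_1)\bigr)+(p-1)\,n_1\,(1-h).
\]
A d\'evissage on $Y_a$ — $\chi(S\otimes T)=p\,\chi(T)$ for a torsion sheaf $T$, and $\chi(S\otimes F)-p\,\chi(F)=\sum_j\deg(S|_{Y_j})\,r_j(F)$ where $r_j(F)$ is the generic rank of $F$ along the component $Y_j$ of $Y_a$ — shows the first bracket depends only on the multidegree of $S$ and the multirank of $L_1$; combined with $n_1=\sum_j r_j(L_1)\deg(Y_j/X^{(1)})$ this gives
\[
\deg E_1'-p\deg E_1=\sum_j r_j(L_1)\bigl(\deg(S|_{Y_j})+(p-1)(1-h)\deg(Y_j/X^{(1)})\bigr).
\]
Now $S\in\Split^0$: by the discussion preceding the lemma (the degree computation over smooth spectral curves, together with the labelling of the components of $\Higgs(X^{(1)})$ by degree and the trivial local system $(\Oo_X^n,d)$, forcing the global shift to be $0$), $S$ has the balanced multidegree $\deg(S|_{Y_j})=(1-p)(1-h)\deg(Y_j/X^{(1)})$ on every component of every geometric spectral curve, so each summand vanishes and $\deg E_1'=p\deg E_1$.

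\textbf{Conclusion, and the hard point.} Hence $\mu(E_1')=p\,\mu(E_1)$ for every $\theta$-invariant subbundle and its image, and likewise $\mu(E')=p\,\mu(E)$; since $p>0$, the inequalities $\mu(E_1')<\mu(E')$ and $\mu(E_1)<\mu(E)$ are equivalent (similarly with $\leq$), so $\Phi_a$ matches (semi)stable Higgs bundles with (semi)stable local systems and $\Phi$ restricts to an isomorphism of the (semi)stable loci over $U$. The main obstacle is the last move in Step~3: showing that a splitting in the \emph{connected} component $\Split^0$ actually has the balanced multidegree on each component of a reducible or non-reduced spectral curve. Over an irreducible (e.g. integral or smooth) spectral curve this is only the degree bookkeeping of the previous lemma, but over the reducible locus one must understand how $\deg S$ distributes among the components — i.e. treat $\Split^0$ as a relative Picard-torsor over all of $\A^{(1)}$ and invoke local constancy of multidegrees in flat families — and this is precisely where passing to the \emph{good} component $\Split^0$, rather than an arbitrary splitting, is indispensable.
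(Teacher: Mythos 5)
Your reduction to a geometric point, your dictionary identifying invariant subbundles on both sides with saturated pure subsheaves of the spectral sheaf, and your Riemann--Roch computation of $\deg E_1' - p\deg E_1$ are all sound and more explicit than anything the paper writes down. You also correctly identify the crux: one must know that $\Split^0$ pins down not merely the total degree of the splitting $S$ on a spectral curve, but the degree of $S$ restricted to each closed sub-spectral curve supporting a sub-Higgs-sheaf. That is the right obstacle. The problem is that your proposed resolution --- ``treat $\Split^0$ as a relative $\Pic$-torsor over all of $\A^{(1)}$ and invoke local constancy of multidegrees in flat families'' --- is not an argument. The component decomposition of a spectral curve is not locally constant over $\A^{(1)}$: the generic spectral curve is integral with one component, while over proper closed loci it acquires more. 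There is no flat family of ``components of $Y_a$'' indexed by $a$, hence no sense in which ``the multidegree of $S$'' is a locally constant function on $\A^{(1)}$, and the claim that $\Split^0$ forces $\deg(S|_{Y_j}) = (1-p)(1-h)\deg(Y_j/X^{(1)})$ on every component of every spectral curve is left unproved. The defining property of $\Split^0$ in the paragraph before the lemma controls only the total shift, and your own formula shows that this does not rule out an imbalanced multidegree (e.g.\ $\deg(S|_{Y_1})$ too big by $1$ and $\deg(S|_{Y_2})$ too small by $1$, which would break the lemma for subbundles supported on a single component).

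The paper closes exactly this gap by a different device. Instead of trying to make sense of multidegrees over $\A^{(1)}$, it introduces for each decomposition $n = k+l$ the polynomial-multiplication map $\phi\colon \A_k\times\A_l \to \A_n$. Over $\mathcal{B}^{(1)}=(\A_k\times\A_l)^{(1)}$ the rank-$k$ sub-spectral curve $Y_b\subset Y_a$ is a genuinely flat closed subfamily, so $\deg(S'|_{Y_b})$, and hence the rank-$k$ shift, \emph{is} a locally constant function on the connected base-change of $U$. It is then pinned down to $0$ at the fibre over the origin by the explicit decomposition $(\Oo_X^n,d)=(\Oo_X,d)^{\oplus n}$, and propagated to $U$ by connectedness of $\A^{(1)}$ via an auxiliary \'etale neighbourhood $V$ of $0$ whose image overlaps that of $U$. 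So the two approaches agree on the Riemann--Roch bookkeeping, but the paper replaces your ``components of $Y_a$'' (which do not vary flatly) with ``sub-spectral curves indexed by factorizations of $a$'' (which do); that substitution is the missing idea, and without it your step~3 does not go through.
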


\begin{proof}
To see that the degree of the underlying bundles is changed by multiplication with $p$ we will apply Lemma \ref{lemma:deg}, which implies that the degree change is of the form $d \mapsto pd + c$, where $c$ is an integer, depending only on $\deg_{Y^{(1)}_a} S$, where $S \in \Split(Y_a^{(1)})$ is a splitting. By Lemma \ref{lemma:deg} we have $c= -(1-p)(1-h)n + \deg_{Y_a^{(1)}} S$. Hence, the first assertions is equivalent to $\deg_{Y_a^{(1)}} S= (1-p)(1-h)n$, which agrees with the definition of $\Split^0$.

The more general case concerning the degrees of Higgs subbundles requires a different analysis. For $n = k + l$ we consider the morphism $\phi\colon \mathcal{B} = \A_k \times \A_l \to \A_n$ given by polynomial multiplication. By pullback along the canonical projections to $\A_l$ and $\A_k$ we obtain two families of spectral covers of $X$ parametrised by $\mathcal{B}$, which we denote by $Y(l)$ and $Y(k)$. This is motivated by the fact that a Higgs subbundle gives rise to a factorisation of the characteristic polynomial of the Higgs field. We have a natural inclusion map $i\colon Y(k) \rightarrow Y$.

Let $U \rightarrow \A^{(1)}$ be an \'etale morphism, such that we have a good splitting $S\in \Split^0(\D/U)$. We will show that $i^*S$ is a good splitting of $\D$ on $Y(k)^{(1)}$, that is, we will show that the fibrewise degree satisfies $\deg_{Y(k)_a^{(1)}}i^*S =(1-p)(1-h)k$. If the image of $U$ contains $0 \in \A^{(1)}$, we may use Remark \ref{rmk:deg}, that is, the fact that degree is locally constant, to reduce this assertion to the zero fibre $\deg_{Y(k)^{(1)}_0} i^*S = (1-p)(1-h)k$.

By Lemma \ref{lemma:deg} this is equivalent to the assertion that $\delta_S^{-1}$ sends an arbitrary degree $0$ rank $\ell$ Higgs bundle $(F,\theta) \in \invHDol(0)$ to a degree $0$ rank $\ell$ local system $(E,\nabla)$. 

We may choose $S$ to be a good splitting over the spectral cover $Y_0^{(1)}$. Lemma \ref{lemma:second} implies that if $j\colon X^{(1)} \rightarrow Y^{(1)}_0$ denotes the zero section, then $j^*S$ is the splitting on $X^{(1)}$, then $j^*S$ is a good splitting for the rank $1$ spectral curve $X^{(1)} \hookrightarrow T^*X^{(1)}$. 

In particular we see that for every integer $k$, the local system $(\Oo_X^{\oplus k},d)$ satisfies $0=\deg_{X^{(1)}} \delta_S^{-1}(\Oo_X^{\oplus k},d) = \deg_{X}(\Oo_{X^{(1)}}^{\oplus k},0)$. 

If $0 \in \A^{(1)}$ is not in the image of $U \rightarrow \A^{(1)}$, we argue as follows. Let $V \rightarrow \A^{(1)}$ be an \'etale neighbourhood of $0$, such that $\Split^0(\D/U)$ admits a section (that is a splitting of $\D$ over $Y^{(1)} \times_{\A^{(1)}} U$). Since the images of $U$ and $V$ are dense open subsets of $\A^{(1)}$, there exists a geometric point $x \in \A^{(1)}(k)$, which factors through $U \times_{\A^{(1)}} V$. Using one more time that the fibrewise degree function is locally constant in flat families, we deduce that $\deg_{Y(k)^{(1)_a}}i^*S =(1-p)(1-h)k$ is satisfied for every geometric point of $V$, hence also $x$, and therefore every geometric point of $U$.
\end{proof}

\begin{cor}\label{cor:proper}
The twisted Hitchin map $\HdR\colon \ssLoc \to \A^{(1)}$ is universally closed. In particular, if $\cLoc$ denotes the coarse moduli space of local systems, then $\HdR\colon \cLoc \rightarrow \A^{(1)}$ is proper.
\end{cor}

\begin{proof}
It has been shown by Nitsure (cf. \cite[Thm. 6.1]{MR1085642}) that the Hitchin map $$\HDol\colon \ssHiggs(X^{(1)}) \to \A^{(1)}$$ is universally closed. Faithfully flat descent theory allows one to deduce the analogous assertion for $\HdR$.
\end{proof} 

Corollary \ref{cor:proper} is a generalisation of a result of Laszlo and Pauly. In their paper \cite{MR1810690} they prove that the zero fibre of the twisted Hitchin fibration from the stack of semistable $t$-connections to the Hitchin base is universally closed.

\section{Geometric Langlands in positive characteristic}\label{GL}

For a smooth proper curve $X/\mathbb{C}$, the geometric Langlands correspondence refers to a conjectured equivalence of categories
$$ D^?(\Loc,\Oo) \cong D^?(\Bun,D_{\Bun}), $$
respecting various extra structures. The precise definition of the derived categories above is given in \cite{MR3300415}. The right hand side $D^?(\Bun,D_{\Bun})$ is endowed with a family of functors, called Hecke operators. The geometric Langlands correspondence is expected to intertwine those with certain multiplication operators acting on $D^?(\Loc,\Oo)$.

The picture described above is reminiscent of number theory. The Langlands programme is a collection of theorems and conjectures, encompassing a far-reaching generalisation of class field theory. Using Grothendieck's \emph{function-sheaf} dictionary it is possible to relate these two programmes. We refer to the survey \cite{MR2290768} and references therein for an account of the classical and geometric Langlands conjecture.

In their paper \cite{Bezrukavnikov:fr} Bezrukavnikov--Braverman could establish a derived equivalence over the locus of smooth spectral curves, relating $D$-modules on $\Bun$ in positive characteristic to coherent sheaves on $\Loc$. In this section we extend their results to the locus of integral spectral curves by presenting the proof of Corollary \ref{cor:Langlands}. We construct an equivalence of derived categories $D^b_{\mathrm{coh}}(\intLoc,\Oo) \cong D^b_{\mathrm{coh}}(\intHiggs,\D_{\Bun})$, where $\D_{\Bun}$ denotes the Azumaya algebra of differential operators on $\Bun$ defined on an open dense substack of $\Higgs(X^{(1)})$ (see section 3.13 in \cite{Bezrukavnikov:fr}). This equivalence is constructed using the Arinkin-Poincar\'e sheaf (\cite{Arinkin:2010uq}). For this reason we assume from now on that the characteristic of the base field $p$ satisfies the estimate
$$ p > 2n^2(h-1) + 1, $$
where $h$ denotes the genus of $X$. This assumption is required to apply Arinkin's results, as we will explain in Theorem \ref{thm:parinkin} and Lemma \ref{lemma:genus}. Moreover this equivalence can be shown to intertwine the Hecke operator with a multiplication operator. 

Corollary \ref{cor:Langlands} emphasises the similarity of autoduality phenomena of the Hitchin system (the classical limit) with the geometric Langlands programme. It provides another example for the philosophy that \emph{quantum} statements become \emph{almost classical} when specialised to positive characteristic. 

From now on abandon the context of orbicurves, and will assume that $X$ is a smooth projective curve defined over $k$. We do not know if the proofs of Theorem \ref{thm:Hecke} and Lemma \ref{lemma:planar} which rely on \cite{MR1658220} generalise straight-forwardly to the orbicurve setting.

\subsection{Splitting of $\D_{\Bun}$ on smooth Hitchin fibres}\label{splitting}

This subsection is entirely expository. We begin by reviewing the theory of abelian group stacks and refer the reader to \cite{AriApp} for a more detailed treatment. In the following we fix a base scheme $S$ and consider an abelian group $S$-stack $Y$. The dual $Y^{\dual}$ is defined to be the stack classifying morphisms of group stacks $Y \to B\G_m$. 

\begin{defi}\label{defi:dual}
Let $Y$ be an abelian group $S$-stack. We define $Y^{\dual}$ to be the group stack, which sends every $S$-scheme $T$ to the groupoid of morphisms of group stacks $Y \times_S T \to B\G_m$.
\end{defi}

If $A$ is an abelian $S$-scheme, $A^{\dual}$ is the dual abelian $S$-scheme, which can be extracted from \cite[p. 184]{MR918564}. Moreover we have $B\G_m^{\dual} = \mathbb{Z}$ and $\mathbb{Z}^{\dual} = B\G_m$. The dual group scheme functor can also be thought of as {the first derived functor of the Cartier dual}.

\begin{defi}\label{defi:Cartier}
The \emph{Cartier dual} $Y^*$ of an abelian group stack $Y$ is the stack of group stack morphisms $Y \to \G_m$. 
\end{defi}
 
If $\Gamma$ is a finite group scheme, its dual is given by the classifying stack $B\Gamma^*$ of the Cartier dual $\Gamma^*$. 

A group stack $Y$ is said to be \emph{nice}, if the natural morphism $Y \to Y^{\dual\dual}$ is an equivalence. All the examples considered above are nice.

Dualizing is in general not an exact operation, as the following counter-example shows. An isogeny of abelian varieties $A \to B$ gives rise to an exact sequence of nice group stacks
$$ 0 \to \Gamma \to A \to B \to 0, $$
the dual sequence is $B^{\dual} \to A^{\dual} \to B\Gamma^* \to 0$, but the first arrow is the dual isogeny and certainly has a non-trivial kernel in general.

The theory of abelian group stacks is used by Bezrukavnikov--Braverman to show the following Theorem (\cite[Thm. 4.10(1)]{Bezrukavnikov:fr}):

\begin{thm}
The Azumaya algebra of differential operators $\D_{\Bun}$ (see section 3.13 in \cite{Bezrukavnikov:fr}) carries a natural group structure over the locus of smooth spectral curves $\smA$. In particular we have an extension of group $\smA$-stacks 
$$ 0 \to B\G_m \to \Y_{\D_{\Bun}} \to \Pic(Y^{(1)}/\smA) \to 0.  $$
Here we use $\Y_{\D_{\Bun}}$ to denote the gerbe associated to $\D_{\Bun}$.
\end{thm}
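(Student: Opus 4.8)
The plan is to recognize this theorem as an instance of the general principle that an Azumaya algebra which is \emph{multiplicative} with respect to a group structure on its base gives rise to a central extension of group stacks, and to verify that the Azumaya algebra $\D_{\Bun}$ (the pushforward to $T^*\Bun^{(1)}$, i.e.\ the differential operators on $\Bun$) carries such a multiplicative structure over the smooth locus $\smA$. First I would recall the setup from \cite{Bezrukavnikov:fr}: over the smooth locus, the Hitchin fibre $\chi_{\Dol}^{-1}(a)$ over a point $a \in \smA$ is a torsor under $\Pic(Y_a)$, and globally $\Higgs|_{\smA} \to \smA$ (after suitable rigidification, and since spectral curves are smooth hence integral) is a $\Pic(Y/\smA)$-torsor. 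The Azumaya algebra $\D_{\Bun}$ lives on this torsor. The key input is that $\D_{\Bun}$, restricted to the smooth locus, is \emph{translation-equivariant} in an appropriate sense: translating a Higgs bundle by a line bundle $L \in \Pic(Y_a)$ (i.e.\ tensoring the corresponding spectral sheaf) changes the fibre of differential operators in a way compatible with composition, because the Fourier–Mukai/BNR formalism identifies $\D_{\Bun}$ with a sheaf built functorially from the spectral data. Concretely, one pulls back $\D_{\Bun}$ along the addition map $m\colon \Pic(Y/\smA) \times_{\smA} \Pic(Y/\smA) \to \Pic(Y/\smA)$ and compares with the two projections; the resulting coherence data is exactly a multiplicative (or "primitive") structure on the associated $\G_m$-gerbe $\Y_{\D_{\Bun}}$.

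Given such a multiplicative structure, the extension $0 \to B\G_m \to \Y_{\D_{\Bun}} \to \Pic(Y/\smA) \to 0$ is produced formally: a $\G_m$-gerbe $\Y \to Z$ over an abelian group stack $Z$ together with an isomorphism $m^*\Y \cong \pr_1^*\Y \otimes \pr_2^*\Y$ satisfying the pentagon/associativity constraint endows the total space $\Y$ with a group structure for which the projection $\Y \to Z$ is a homomorphism and the kernel is the gerbe band $B\G_m$; exactness of the sequence is then immediate from the definitions (surjectivity because $\Y \to Z$ is a gerbe hence an epimorphism, and the kernel is $B\G_m$ by construction). So the content of the theorem is entirely in constructing the multiplicative structure on $\D_{\Bun}$ over $\smA$, and then invoking the general gerbe-theoretic machinery recalled at the start of this subsection (and in \cite{AriApp}).

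The main steps, in order, would be: (1) express $\D_{\Bun}|_{\smA}$ in terms of spectral data — using the BNR correspondence (Theorem \ref{thm:bnrHiggs}) over the smooth locus, a Higgs bundle in $\chi_{\Dol}^{-1}(a)$ is a line bundle $L$ on the smooth curve $Y_a$, and $\D_{\Bun}$ is the Azumaya algebra whose splitting modules are the relevant $D$-modules on $\Bun$; (2) analyze how $\D_{\Bun}$ transforms under translation by $\Pic(Y_a)$, showing the cocycle relating the translate of a splitting module to the original is governed by a line bundle, hence $\D_{\Bun}$ is primitive/multiplicative; (3) check the associativity constraint for this multiplicative structure (the pentagon), which amounts to a compatibility of the line bundle cocycles over triple products of $\Pic$; (4) deduce the group structure on $\Y_{\D_{\Bun}}$ and the exact sequence. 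Since the subsection is declared expository, steps (1)–(3) are to be cited from \cite[sect.\ 4]{Bezrukavnikov:fr} rather than reproved, and the role here is to state the result cleanly for later use.

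The hard part — were one to prove it from scratch rather than cite it — is step (2)–(3): verifying that $\D_{\Bun}$ is genuinely multiplicative and not merely "multiplicative up to a gerbe twist that fails the pentagon." This requires knowing that the $D$-module-theoretic Fourier transform on $\Bun$ interacts with the abelian structure on smooth Hitchin fibres compatibly enough; in \cite{Bezrukavnikov:fr} this is the substantive computation, relying on the identification of $\D_{\Bun}$ via the Hecke/Fourier–Mukai kernel and on smoothness (hence the restriction to $\smA$ — for singular or non-integral spectral curves $\Pic(Y_a)$ is no longer an abelian variety and the argument breaks, which is precisely why the present paper's extension to the integral locus in later sections requires the compactified-Jacobian autoduality of Arinkin rather than this clean group-stack statement). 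For the purposes of this paper I would simply record the theorem with a reference and move on.
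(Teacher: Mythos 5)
Correct, and the approach matches the paper's: you rightly recognize that this subsection is expository and that the theorem is quoted directly from Bezrukavnikov--Braverman (\cite[Thm.~4.10(1)]{Bezrukavnikov:fr}) rather than proved here. Your sketch of the underlying multiplicative-gerbe construction is a reasonable account of what the cited reference establishes, but the present paper itself records no proof and simply cites the result.
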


We refer the reader to \cite[App. 5.5]{MR2373230} for a precise definition of group structures on Azumaya algebras. For our purpose it is sufficient to know that we have an extension of group stacks as stated in the theorem above. 

Using this theorem, Bezrukavnikov--Braverman conclude in \cite[Thm. 4.10(2)]{Bezrukavnikov:fr} that \'etale locally over the Hitchin base $\smA$ the Azumaya algebra $\D_{\Bun}$ splits. By virtue of the Abel-Jacobi map $$Y^{sm,(1)} \to \Pic(Y^{sm,(1)}/\smA),$$ sending $y \in Y_Y$ to the line bundle $\Oo_{Y_a}(-y)$, they compare splittings of $\D_{\Bun}$ respecting the group structure on the Hitchin fibres, with splittings of $\D_X$ on the spectral curve. 

\begin{cor}[Bezrukavnikov--Braverman]\label{cor:smthsplit}
The pullback of $\D_{\Bun}$ along the Abel-Jacobi map is canonically Morita equivalent to $\D_X$. This implies the existence of a natural isomorphism
$$ \Split_{\mathrm{grp}}(\D_{\Bun}/\smA) \cong \Split(\D_X/\smA), $$
where $\Split_{\mathrm{grp}}$ refers to the stack of relative splittings respecting the group structure. In particular, the Azumaya algebra $\D_{\Bun}$ splits \'etale locally over the base $\smA$.  
\end{cor}

We refer the reader to \cite[App. 5.5]{MR2373230} for the notion of a splitting respecting the group structure. All that matters to us, is that we can relate splittings of $\D_X$ on smooth spectral curves, to splittings of $\D_{\Bun}$ on the corresponding Hitchin fibres.

\subsection{The Langlands correspondence over $\INTA$}\label{integral}

This section is devoted to the proof of Theorem \ref{thm:Langlands}, which is an extension of the main result of \cite{Bezrukavnikov:fr}. We use the notation $\intLoc  = \intLoc(X)$ and $\intHiggs = \intHiggs(X^{(1)})$ for the moduli stacks of local systems on $X$, respectively Higgs bundles on $X^{(1)}$, of rank $n$ with integral spectral curves. The corresponding open dense subset of $\A^{(1)}$ will be denoted by $\intA$. We will freely use results and notation about Fourier-Mukai transforms \cite{MR946249}. 

\begin{thm}\label{thm:Langlands}
There exists a canonical $p_2^*\D_{\Bun}$-module $\Lb$ on
$$ \intLoc \times_{\intA} \intHiggs $$
inducing an equivalence of derived categories
$$ D^b_{\mathrm{coh}}(\intLoc,\Oo) \cong D^b_{\mathrm{coh}}(\intHiggs,\D_{\Bun}). $$
\end{thm}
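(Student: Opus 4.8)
The plan is to construct the kernel $\Lb$ by combining two ingredients: Arinkin's Fourier--Mukai kernel (the Poincar\'e sheaf) on the fibred product of compactified Jacobians of the universal integral spectral curve, and a relative splitting of the Azumaya algebra $\D_{\Bun}$ pulled back to that space. First I would recall, via the BNR correspondence (Theorem \ref{thm:bnrHiggs}), that over the integral locus $\intA \subset \A^{(1)}$ the Hitchin fibre $\HDol^{-1}(a) = \intHiggs \times_{\intA} \{a\}$ is identified with the compactified Jacobian $\Picb(Y_a)$ of the integral spectral curve $Y_a$ (rank-one torsion-free sheaves of pure support), and the good-splitting computation of the previous section identifies $\HdR^{-1}(a)$ with the same space up to the degree-shift $d \mapsto pd$ supplied by $\Split^0$. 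Thus $\intLoc \times_{\intA} \intHiggs$ fibres over $\intA$ by products $\Picb^{d}(Y_a) \times \Picb^{0}(Y_a)$, and Arinkin's autoduality (\cite{Arinkin:2010uq}, available in the stated characteristic range by Theorem \ref{thm:parinkin} and Lemma \ref{lemma:genus}) supplies a kernel $\Pp$ on this product inducing an equivalence $D^b_{\mathrm{coh}}(\Picb(Y_a),\Oo) \cong D^b_{\mathrm{coh}}(\Jb(Y_a),\Oo)$ fibrewise, and more globally over $\intA$.

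Next I would upgrade the $\Oo$-linear Arinkin equivalence on the Higgs side to a $\D_{\Bun}$-linear one. By Corollary \ref{cor:smthsplit} and Theorem \ref{thm:relative_curve}, $\D_{\Bun}$ splits \'etale locally over $\intA$ (one checks that the group-structure arguments and the splitting results extend from the smooth locus to the integral locus, since $\Picb(Y_a)$ is still an abelian group stack acting on itself and the relevant gerbe is still neutralised by a splitting of $\D_X$ on $Y_a$, which exists by Theorem \ref{thm:curve}). A choice of splitting $S$ identifies $D^b_{\mathrm{coh}}(\intHiggs,\D_{\Bun})$ with $D^b_{\mathrm{coh}}(\intHiggs,\Oo)$, and twisting the Arinkin kernel by $S$ (pulled back via $p_2$) produces a $p_2^*\D_{\Bun}$-module $\Lb$. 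The content is that the result is independent of the local choices: the ambiguity in $S$ is a line bundle on the universal spectral curve, which under Fourier--Mukai becomes a translation on the Jacobian side, and one checks these translations glue the locally defined kernels into a global $\Lb$ on $\intLoc \times_{\intA} \intHiggs$ — here one uses that $\intLoc$ is itself \'etale locally $\intHiggs$ over $\intA$ by Theorem \ref{thm:main}, so the descent datum is exactly the one provided by $\Split^0$ being a torsor under $\Pic$ of the spectral curve.

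Finally I would prove that $\Lb$ induces an equivalence of the global derived categories. The strategy is fppf/\'etale descent: the claim is \'etale local over $\intA$, and over an \'etale chart $U \to \intA$ where a splitting exists it reduces to the statement that Arinkin's Fourier--Mukai transform is an equivalence relatively over $U$. For this one invokes the base-change compatibility of Arinkin's construction together with the fact that $\phi: Y \to \A$ is proper, flat and cohomologically flat in degree zero (Lemma \ref{lemma:pfcf}) with integral geometric fibres over $\intA$, so that $\Picb(Y/\intA)$ and $\Jb(Y/\intA)$ behave well in families; one then checks the convolution $\Lb \ast (-)$ and its adjoint compose to the identity by reducing to the fibrewise statement via a standard relative-to-fibrewise argument (faithfully flat descent and semicontinuity, or equivalently the criterion that a family of Fourier--Mukai functors which is a fibrewise equivalence is an equivalence).

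I expect the main obstacle to be the passage from Arinkin's \emph{fibrewise} (or smooth-locus) autoduality to a genuinely \emph{relative} statement over the integral locus $\intA$, together with the gluing of the locally defined $S$-twisted kernels into one global object $\Lb$: one must verify that Arinkin's Poincar\'e sheaf is constructed compatibly with base change on the family of integral spectral curves (which is where the characteristic hypothesis $p > 2n^2(h-1)+1$ and the genus bound of Lemma \ref{lemma:genus} enter, to guarantee the construction applies uniformly), and that the transition between two choices of splitting — a line bundle on $Y_a$ — acts on the Jacobian side exactly by the translation that matches the $\Pic(Y_a)$-torsor structure on $\Split^0$. Verifying the Hecke eigenproperty (Theorem \ref{thm:Hecke}) and compatibility with the multiplication operator, needed for Corollary \ref{cor:Langlands}, is a further step but is essentially the translation-equivariance of $\Pp$ established in \cite[sect. 6.4]{Arinkin:2010uq} transported through the splitting.
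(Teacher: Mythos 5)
Your high-level ingredients are the right ones — Arinkin's Poincar\'e sheaf over the integral spectral locus, a splitting of $\D_{\Bun}$, and the BNR identification of Hitchin fibres with compactified Jacobians — and you correctly identify the two sticking points: extending the $\D_{\Bun}$-splitting from $\smA$ to $\intA$, and globalizing the locally defined twisted kernel. But you leave both open, and the gluing strategy you propose (choose local splittings over an \'etale cover, twist, and patch via a descent cocycle coming from the $\Pic$-torsor structure of $\Split^0$) is not what the paper does and would require a genuine cocycle verification you never carry out. The paper sidesteps both issues with two technical devices that are absent from your proposal. For the extension problem: the splitting $\Ll$ of $p_2^*\D_{\Bun}$ on $\Split \times_{\smA}\smHiggs$ from Corollary \ref{cor:smthsplit} extends to a splitting on $\Split\times_{\intA}\intHiggs$ because the \emph{total space} of $\Split\times_{\intA}\intHiggs$ is smooth (a Brauer-group Hartogs argument across a codimension-$\geq 2$ locus); the ambiguity is a line bundle pulled back from $\intA$, which is trivial since $\intA$ is open in affine space. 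You instead propose to ``check that the group-structure arguments and splitting results extend from the smooth locus to the integral locus'' — but that is exactly the non-trivial content, and the extension-by-smoothness argument is the clean way to handle it.

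For the globalization problem, the paper does not glue local kernels at all. It observes that $\Split\times_{\intA}\intHiggs$ embeds as an open substack $j$ into $\intLoc\times_{\intA}\intHiggs$ with complement of codimension $\geq 2$ (a splitting of $\D_X$ on a spectral curve is a rank-$p$ $\D_X$-module on $Y_a$, hence, via BNR for local systems, a point of $\intLoc$), and then \emph{defines} $\Lb := j_*\Ll$. This single pushforward replaces the descent cocycle you flag as ``the main obstacle.'' Note also a conceptual difference: $\Lb$ is itself (the pushforward of) a splitting of $p_2^*\D_{\Bun}$, not a twist $\Pp\otimes p_2^*S$ of Arinkin's Poincar\'e sheaf; the relation to $\Pb$ is the identity $(\phi\times\id)^*\Hhom_{p_3^*\D_{\Bun}}(p_{13}^*\Ll,p_{23}^*\Lb)=p_{13}^*\Pb$. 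The final step of your proposal — checking the equivalence \'etale locally on $\intA$ by reducing to Arinkin's theorem, using Cohen--Macaulayness and the codimension-$\geq 2$ reconstruction — does agree with the paper's argument, but it only works once $\Lb$ has been built by the pushforward device above rather than by gluing.
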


\subsubsection{Relation with the Arinkin-Poincar\'e sheaf}

This section contains an introduction to autoduality for compactified Jacobians. In the following we denote by $S$ a scheme of finite type over $k$, which parametrises a flat family of locally planar, integral curves $\pi\colon C \to S$. We denote by $\Picb(C/S)$ the compactified Picard stack, that is the moduli stack of torsion-free rank $1$ sheaves on $C$, and by $\Jb(C/S)$ the compactified Jacobian. The universal family of torsion free sheaves on $\Picb(C/S) \times_S C$ will be referred to as $\Qb$. Similarly, we denote the universal line bundle on $\Pic(C/S) \times_S C$ by $\Qq$. Let $(C/S)^{[d]}$ denote the relative Hilbert scheme of $d$ points. Below we define a natural morphism $(C/S)^{[d]} \to \Picb^{d}(C/S)$ to the degree $d$ component of the compactified Picard stack. The following proposition can be found in \cite[sect. 5]{MR555258}.

\begin{prop}[Abel map]\label{prop:abel}
The Abel map $$A^d\colon (C/S)^{[d]} \to \Picb^{d}(C/S)$$ sends a family of subschemes $D$ of $C/S$ to the dual of its ideal sheaf $\I_D^{\dual}$. For $d \geq 2g-1$, where $g$ denotes the arithmetic genus of the family $C/S$, the map $A$ is smooth and surjective.
\end{prop}

On a smooth curve $X$, elements of $X^{[d]}$ are usually referred to as degree $d$ effective divisors.

We conclude by stating the following properties of $\Pp$ (see \cite{Arinkin:2007fk}).

\begin{lemma}\label{lemma:poincareproperties}
The Poincar\'e line bundle $\Pp$ on $\Picb(C/S) \times_S \Pic(C/S)$ has the following properties:
\begin{itemize}
\item[(a)] Its restriction to $\Pic(C/S) \times_S \Pic(C/S)$ is symmetric under change of coordinates. In particular, $\Pp$ extends to a line bundle on $\Picb(C/S) \times_S \Pic(C/S) \cup \Pic(C/S) \times_S \Picb(C/S)$, which will be referred to by the same notation.
\item[(b)] For every $F \in \Picb(C/S)$, the total space of the corresponding line bundle $\Pp|_{\{F\} \times \Pic}$ is endowed with a natural group structure, and thus gives rise to a central extension of $\Pic$.
\end{itemize}
\end{lemma}

According to the first and last point of the lemma above, we can restrict $\Pp$ to a line bundle on $\Jb \times_S \J \cup \J \times_S \Jb$. In \cite{Arinkin:2010uq} Arinkin has constructed a maximal Cohen-Macaulay sheaf $\bar{\Pp}$ on $\Jb \times_S \Jb$, extending the Poincar\'e line bundle $\Pp$ defined above.

\begin{thm}[Arinkin]\label{thm:parinkin}
Let $k$ be an algebraically closed field of characteristic zero or $p > 2g - 1$. Let $S$ be a $k$-scheme locally of finite type and $\pi\colon C \to S$ a flat family of integral curves with planar singularities and arithmetic genus $g$. Then there exists a coherent sheaf $\Pb$ on $\Jb \times_s \Jb$, which is Cohen-Macaulay over every closed point $s \in S$ and extends the Poincar\'e line bundle $\Pp$ on $\Jb \times_S \J \cup \J \times_S \Jb$. Relative Fourier-Mukai duality with respect to the kernel $\Pb$ gives rise to an equivalence of bounded derived categories
$$ \Phi_{\Pb}\colon D^b_{\coh}(\Jb) \to D^b_{\coh}(\Jb). $$
\end{thm}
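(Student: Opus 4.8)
The statement is Arinkin's (\cite{Arinkin:2010uq}), proved there in characteristic zero; the only new point is that the argument survives when $p > 2g-1$. The plan is to follow his construction in three stages and check that each ingredient is available over an arbitrary Noetherian base, isolating the one place where the hypothesis on $p$ is genuinely needed. For the first stage I would recall that the relative compactified Jacobian $\Jb = \Jb(C/S)$, parametrizing rank-one torsion-free sheaves on the fibres of $\pi$, is cut out inside a relative Quot scheme following Altman--Kleiman, a construction insensitive to the characteristic; on the open locus $\Jb \times_S \J \cup \J \times_S \Jb$ the sheaf $\Pb$ is the tautological Poincar\'e bundle obtained from a universal sheaf, and it is extended over the boundary \'etale-locally by reducing, via the \'etale-local decomposition of $\Jb$ into factors indexed by the singular points of a fibre, to the local compactified Jacobian of a single planar curve singularity, where $\Pb$ is assembled from a two-term resolution of a torsion-free module by line bundles. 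All of this is deformation theory and homological algebra and transcribes word for word.

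For the second stage I would invoke the Altman--Kleiman--Rego theory, valid in all characteristics, by which $\Jb$ is reduced, irreducible, a local complete intersection over every geometric point of $S$, hence Cohen--Macaulay. Arinkin's local models present $\Pb$ as the cohomology of a short complex of vector bundles whose degeneracy locus has the expected codimension, so an Auslander--Buchsbaum / Eagon--Northcott estimate shows $\Pb$ is Cohen--Macaulay over the closed points of $S$; again nothing here sees the characteristic.

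The third stage, the Fourier--Mukai equivalence, is the crux and the only place where the bound on $p$ intervenes. The plan is to form a dual kernel $\Pb'$ from $\Pb$ via the involution $F \mapsto \mathcal{E}xt^1(F,\omega_C)$ of $\Jb$ and to show that both convolutions $\Pb * \Pb'$ and $\Pb' * \Pb$ are the structure sheaf of the diagonal, up to a shift and a line-bundle twist; combined with Mukai's convolution lemma (\cite{MR946249}) this exhibits $\Ff$ as an equivalence with explicit inverse. Over the open Jacobian locus this is Mukai's classical autoduality; the residual term is a sheaf supported on $\Jb \setminus \J$, forced to vanish by a dimension count together with the Cohen--Macaulay property and Grothendieck--Serre duality along the fibres. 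This argument runs through auxiliary objects -- a line bundle, an Abel--Jacobi morphism from a symmetric power $\Sym^d C$, the associated theta divisor -- involving the integers up to $2g-1$, and it uses that the relevant Abel--Jacobi and multiplication maps are separable and that formation of symmetric-power invariants commutes with base change; these properties can fail only when $p$ divides one of the integers $\leq 2g-1$ occurring, so the hypothesis $p > 2g-1$ is exactly what is required. The same reasoning runs fibrewise over $S$, and flatness and coherence of $\Pb$ give the relative statement.

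I expect the third stage to be the main obstacle: one must reproduce Arinkin's boundary-vanishing argument without recourse to generic smoothness, and the bookkeeping confirming that $2g-1$ is the sharp threshold -- that no weaker bound on $p$ suffices -- is the delicate part. The first two stages should amount to a routine re-reading of \cite{Arinkin:2010uq} in the language of Noetherian algebraic geometry.
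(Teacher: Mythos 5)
Your three-stage outline (construct $\Pb$, establish fibrewise Cohen--Macaulayness, prove the Fourier--Mukai transform is an equivalence) is the right skeleton, but you have mislocated both the content of the construction and the place where the characteristic bound is actually used, which is the only genuinely new content of the statement over Arinkin's characteristic-zero result.

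You assert that the construction of $\Pb$ over the boundary ``transcribes word for word,'' is a matter of local compactified Jacobians of a single planar singularity and two-term resolutions by line bundles, and that ``nothing here sees the characteristic.'' That is not how Arinkin builds $\Pb$, and it is exactly here that the paper locates the characteristic sensitivity. Arinkin's construction runs through the Hilbert scheme of points on the plane and Haiman's $n!$ theorem: the isospectral Hilbert scheme $X_n$ must be Cohen--Macaulay, and this is used with $n = 2g-1$. The only characteristic-sensitive step in Haiman's argument is Maschke's theorem for the symmetric group $S_n$, which forces $p > n = 2g-1$; the combinatorial core (the Polygraph Theorem) is already proved over $\mathbb{Z}$. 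The bound is needed a second time because $S_n$-representation theory also appears directly in the defining formula of the Poincar\'e sheaf (Arinkin's (4.1)). So the restriction on $p$ arises in your stage one, not your stage three.

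Conversely, your stage three explanation --- separability of Abel--Jacobi and multiplication maps, and ``formation of symmetric-power invariants commutes with base change'' failing when $p$ divides some integer $\leq 2g-1$ --- is not what the paper (or Arinkin) uses, and as stated it is too vague to carry weight: you have not identified which specific base-change or separability statement would fail and why, nor why $2g-1$ rather than some other bound is the threshold. You also claim the bound is sharp; neither the paper nor Arinkin asserts that, and the argument as given would only show that Arinkin's method requires it, not that no weaker bound could work. The fix is to replace your stage-three heuristic with the actual mechanism: Haiman's $n!$ theorem plus Maschke for $S_{2g-1}$, feeding into the construction of $\Pb$ rather than into the inversion argument.
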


The proof of this theorem can be found in \cite{Arinkin:2010uq} for the case of characteristic zero. The reason for restricting the characteristic is that Haiman's celebrated $n!$ Theorem (cf. Remark \ref{rmk:haiman}, \cite{MR1839919}) plays a role in the construction of $\Pb$.
\begin{rmk}\label{rmk:haiman}
If $H_n$ denotes the Hilbert scheme of length $n$ points on $\mathbb{A}^2$, Haiman's result shows that a certain natural morphism $X_n \to H_n$ is finite and flat. The scheme $X_n$ is referred to as the \emph{isospectral Hilbert scheme}, and we may also state the aforementioned result of Haiman as sayiong that $X_n$ is Cohen-Macaulay. Again the original source states the relevant theorem only in the case of characteristic zero, but the proof is easily adapted to $p > n$. The only characteristic-sensitive part of Haiman's proof is the use of Maschke's theorem for the symmetric group $S_n$, which is true as long as $p > n$ (\cite[Thm. XVIII.1.2]{MR1878556}). The combinatorial backbone of Haiman's work, the Polygraph Theorem, has already been proved over $\mathbb{Z}$ in the original publication.  
\end{rmk}

In order to construct the Arinkin-Poincar\'e sheaf one needs Cohen-Macaulayness of the isospectral Hilbert scheme $X_n$ for $n = 2g - 1$. This requires $p > 2g - 1$. But since representation theory of the symmetric group is also used in the defining formula \cite[(4.1)]{Arinkin:2010uq} of the Arinkin-Poincar\'e sheaf, Arinkin's methods depend on the restriction $p > 2g -1$ a second time.

However, as shown by the recent work \cite{Melo:2013aa}, it is actually sufficient to assume $p > g$.

Over the smooth open subscheme $J \subset \Jb$, the Arinkin-Poincar\'e sheaf $\bar{\Pp}/\Jb \times \Jb$ restricts to the Poincar\'e line bundle $\Pp/\J \times \Jb$. Moreover if the generic member of the family $C \to S$ is smooth, the codimension of the complement of $J \subset \bar{\J}$ is $\geq 2$. A Cohen-Macaulay sheaf can be recovered as the push-forward of the restriction of an open subset, which is the complement of a closed subset of codimension $\geq 2$. This allows us to reconstruct the original sheaf by push-forward from the smooth locus
$$ \bar{\Pp} = i_*\Pp. $$

The following theorem is a direct consequence of \cite[Thm. C]{Arinkin:2010uq}.

\begin{thm}\label{thm:stackyarinkin}
With the same assumptions as in Theorem \ref{thm:parinkin} there exists a natural automorphism of derived categories
 $$ \Phi_{\Pb}\colon D^b_{\coh}(\Picb) \to D^b_{\coh}(\Picb) $$
 given by an integral kernel $\Pb$, which is a Cohen-Macaulay coherent sheaf on the stack $\Picb \times_S \Picb$. Moreover $\Pb$ is the (non-derived) push-forward of the Poincar\'e line bundle $\Pp$ on $$\Picb \times_S \Pic \cup \Pic \times_S \Picb.$$
\end{thm}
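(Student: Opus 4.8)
The plan is to deduce the statement from Arinkin's Theorem C by two routine transfers: relativizing it over $S$, and transporting it from characteristic zero to characteristic $p$. First I fix the dictionary. The compactified Picard stack $\Picb$, which parametrizes pure rank-one sheaves on the fibres of $C \to S$, is a $\G_m$-gerbe over the compactified Jacobian $\Jb$ via the rigidification forgetting the scalar automorphisms of a sheaf, and likewise $\Pic \to \J$ over the open locus of line bundles; moreover $\Jb \to S$, hence also $\Picb \to S$, is flat with geometric fibres that are reduced, irreducible and Cohen--Macaulay (this is part of Arinkin's setup for Theorem \ref{thm:parinkin}, and the reason for restricting to integral curves with planar singularities). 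On the open substack $U := \Pic \times_S \Picb \cup \Picb \times_S \Pic$ at least one of the two tautological rank-one sheaves pulled back from the two factors is locally free, so the determinant of the cohomology along $C$ of their tensor product --- the Deligne pairing, suitably normalized along the zero-sections --- defines a canonical line bundle $\Pp$ on $U$ which on $\Pic \times_S \Pic$ is the ordinary Poincar\'e line bundle. I set $\Pb := j_*\Pp$, where $j \colon U \hookrightarrow \Picb \times_S \Picb$ is the open immersion. The complement of $U$ is $(\Picb \setminus \Pic) \times_S (\Picb \setminus \Pic)$, of codimension $\geq 2$ in each geometric fibre over $S$, and $\Picb \times_S \Picb$ is fibrewise $S_2$; hence $\Pb$ is the relative $S_2$-ification of $\Pp$, is flat over $S$ with Cohen--Macaulay geometric fibres, and its formation commutes with arbitrary base change $S' \to S$.

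With this notation, Arinkin's Theorem C asserts that over a base locally of finite type over an algebraically closed field of characteristic zero, the sheaf $\Pb$ is Cohen--Macaulay and the Fourier--Mukai functor $\Ff := \Psi_{\Pb} \colon D^b_{\coh}(\Picb) \to D^b_{\coh}(\Picb)$ is an equivalence. To obtain the same conclusions over our field $k$ it suffices, as discussed after Theorem \ref{thm:parinkin}, to know $p > 2g-1$, where $g$ is the arithmetic genus of the fibres of $C \to S$: the only characteristic-sensitive inputs, both to Arinkin's construction of $\Pb$ through the formula \cite[(4.1)]{Arinkin:2010uq} and to his proof that $\Ff$ is an equivalence, are Haiman's $n!$-theorem for $n = 2g-1$ and the semisimplicity of the group algebra $k[S_m]$ for $m \leq 2g-1$, and both are valid once $p > 2g-1$. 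Moreover, since $\Pb$ and the convolution $\Pb * \Pb$ --- whose identification, up to shift, with the structure sheaf of the graph of an involution of $\Picb$ encodes that $\Ff$ is an equivalence --- commute with base change on $S$, the relative case reduces to the absolute one by checking on geometric fibres. In our application $C \to S$ is the universal family of integral spectral curves, so Lemma \ref{lemma:genus} bounds $g$ and the standing hypothesis $p > 2n^2(h-1)+1$ forces $p > 2g-1$; this completes the argument.

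The step I expect to be the main obstacle is the input used silently throughout the above: that the compactified Jacobian, hence $\Picb \to S$, is flat with Cohen--Macaulay geometric fibres, and, relatedly, that $\Pb = j_*\Pp$ is the relative $S_2$-ification compatibly with base change. This is exactly where the planarity hypothesis does its work: it is what makes $\Jb$ Cohen--Macaulay, which in turn guarantees both that $\Pb$ is itself Cohen--Macaulay and that an $S_2$-sheaf on $\Picb \times_S \Picb$ is recovered from its restriction to the complement of a codimension-$\geq 2$ closed substack in a manner stable under flat base change. Checking that Arinkin's proof contains no further characteristic-zero input beyond the two points listed is the only other item requiring genuine (though light) verification.
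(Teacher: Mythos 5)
Your proposal is essentially correct and lands in the same place as the paper --- $\Pb := j_*\Pp$, invoking Arinkin's Theorem C, handling the characteristic restriction through Haiman and Maschke --- but the two arguments differ noticeably in how $\Pp$ is produced on the open locus $U$ and in the descent mechanism. You construct $\Pp$ directly as a normalized determinant of cohomology (Deligne pairing) of the two tautological rank-one sheaves. The paper instead goes through duality theory for group stacks: it uses an \'etale-local section $s$ of $C^{\sm} \to S$ to split $\Picb \cong \mathbb{Z} \times \Jb \times B\G_m$, then shows the Abel--Jacobi pullback $A^*: \Picb^{\dual} \to \Pic$ is an isomorphism \'etale-locally and hence globally by descent, and takes $\Pp$ to be the resulting universal $\G_m$-extension line bundle. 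The paper's route automatically builds in the normalization along the zero-section and the group-extension structure that is needed later for the Hecke eigenproperty, which your ``suitably normalized along the zero-sections'' gestures at but does not nail down given that $C^{\sm} \to S$ has no global section. The second difference is the reduction step for the equivalence: you check on geometric fibres of $S$ and rely on base-change compatibility of $j_*\Pp$ and the convolution kernel, while the paper replaces $S$ by an \'etale cover where the gerbe $\Picb \to \Jb$ is neutralized and $\Jb$ trivializes, and then applies Arinkin's relative Theorem C directly. The \'etale-local reduction is a bit cleaner because it avoids having to argue separately that derived pushforward in the Fourier--Mukai convolution commutes with non-flat base change to a geometric point; your fibrewise reduction is plausible but requires a Tor-independence or cohomology-and-base-change check that you assert rather than prove. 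Both are workable, but the paper's descent is the lower-friction path, and it is also where the $B\G_m$ factor and the group structure --- which you do not explicitly account for --- get handled for free.
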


\begin{proof}
According to Lemma \ref{lemma:poincareproperties}, a section of the degree one component $\pi\colon \Pic^1(C/S) \to S$ induces not only a splitting of the extension $$0 \to J \to \Pic^{rig} \to \mathbb{Z} \to ,0$$ but also of the group gerbe $\Picb \to \Picb^{rig}$. Since $\pi$ is a smooth map of stacks, such a section can be chosen \'etale locally. This yields an \'etale local identification of $\Picb$ with $\mathbb{Z} \times \Jb \times B\G_m$, under which $\Pp$ just restricts componentwise to the Poincar\'e line bundle on $J \times_S J_b \cup \Jb \times_S J$. In particular, we see that $i_*\Pp$ is a maximal Cohen-Macaulay sheaf on $\Picb \times_S \Picb$ and induces an equivalence of derived categories.
\end{proof}

\subsubsection{Twisting Arinkin's equivalence}

We denote by $\intA \subset \A^{(1)}$ the open subset corresponding to integral spectral curves $Y_a^{(1)}$.

We would like to construct an integral kernel, that is, a sheaf $\Lb$ on the fibre product $\intLoc \times_{\A^{(1)}} \intHiggs$, which is endowed with the structure of a $p_2^* \D_{\Bun}$-module. As soon as this is done, we are able to set-up a Fourier-Mukai functor between derived categories, analogous to \cite{MR946249}.

Let us denote by $$\phi\colon \intSplit \times_{\intA} \intLoc \to \intSplit \times_{\intA} \intHiggs$$ the restriction of the isomorphism of Theorem \ref{thm:main} to the integral locus $\intA$. According to Corollary \ref{cor:smthsplit} there exists a $p_2^* \D_{\Bun}$-splitting $\Ll$ on $\Split \times_{\smA} \smHiggs$. Since the total space of $\Split \times_{\intA} \intHiggs$ is smooth we can extend $\Ll$ to a splitting on $\Split \times_{\intA} \Picb$ (Corollary 2.6 in \cite{MR559531}). On every connected component this extension is unique up to tensoring by a line bundle $\chi^*L$ pulled back from $\intA$. But $\Pic(\intA) = 1$, since $\intA \subset \A^{(1)}$ is an open subscheme of affine space. We choose one and denote it again by $\Ll$. 

\begin{lemma}\label{lemma:L-local}
We have $\Hhom_{p_3^*\D_{\Bun}}(p_{13}^*\Ll,p_{23}^*\Ll) = \phi^*p_{23}^*\Pp$ on
$$ \intSplit \times_{\intA} \intLoc \times_{\intA} \intHiggs \cong \intSplit \times_{\intA} \intHiggs \times_{\intA} \intHiggs. $$
\end{lemma}

\begin{proof}
All constituents in the above identity are either splittings or line bundles, which arise as pullbacks of some extension of the same object defined over $\smA$. In particular they are well-defined up to twisting by the same line bundle defined on $\A^{(1)}$. We may therefore conclude that it suffices to check the identity over $\smA$. Over the locus of smooth spectral curves it holds by means of the Abel-Jacobi map defining the families of line bundles, respectively splittings, given by $\Pp$ and $\Ll$ (Corollary \ref{cor:smthsplit}).
\end{proof}

We denote by $$j\colon \Split \times_{\A^{(1)}} \intHiggs \to \intHiggs \times_{\A^{(1)}} \intHiggs$$ the inclusion of this open substack. We observe that its complement has codimension $\geq 2$, since $\chi_{dR}$ is flat and $j$ is surjective over the locus of smooth spectral curves $\smA$. Theorem \ref{thm:stackyarinkin} motivates the next definition.

\begin{defi}\label{defi:kernel}
We define the  $p_2^* \D_{\Bun}$-module $\Lb$ to be $j_*\Ll$.
\end{defi}

Using this identification we obtain the identity $$ (\phi \times_{\intA} \id_{\intHiggs})^*\Hhom_{p_{3}^*\D_{\Bun}}(p_{13}^*\Ll,p_{23}^*\Lb) = p_{13}^*\Pb $$ on $$ \intSplit \times_{\intA} \intLoc \times_{\intA} \intHiggs \cong \intSplit \times_{\intA} \intHiggs \times_{\intA} \intHiggs. $$

\'Etale locally on $\intA$ we can choose a section of $\intSplit \to \intA$. Using descent theory, we conclude the following lemmas.

\begin{lemma}\label{lemma:CM}
The sheaf $\Lb$ is Cohen-Macaulay.
\end{lemma}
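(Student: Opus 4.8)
The plan is to reduce the Cohen-Macaulay property of $\bar{\Ll} = j_*\Ll$ to the already-established Cohen-Macaulay property of Arinkin's Poincar\'e sheaf $\Pb$ via the \'etale-local trivialization of the splitting stack, and then invoke faithfully flat descent. First I would pass to an \'etale cover $\{U_i\}$ of $\intA$ on which $\intSplit \to \intA$ admits a section $\sigma_i$; such a section exists \'etale locally by Theorem \ref{thm:relative_curve} and the discussion in subsection \ref{ssec:relative_splittings}. Over each $U_i$, the section $\sigma_i$ trivializes the $p_2^*\D_{\Bun}$-splitting $\Ll$, so that $\Ll|_{U_i}$ is, up to twist by a line bundle, identified with the Poincar\'e-type line bundle $\Pp$ on $\Picb \times_{U_i} \Pic$; correspondingly, under the isomorphism $\phi$ of Theorem \ref{thm:main} the open substack $\Split \times_{\A^{(1)}}\intHiggs \hookrightarrow \intHiggs \times_{\A^{(1)}}\intHiggs$ pulls back to the locus $\Picb \times_S \Pic \cup \Pic \times_S \Picb$ appearing in Theorem \ref{thm:stackyarinkin}. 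This is precisely the identity $(\phi \times_{\intA}\id)^* \Hhom_{p_3^*\D_{\Bun}}(p_{13}^*\Ll, p_{23}^*\bar{\Ll}) = p_{13}^*\Pb$ recorded just above the definition of $\bar{\Ll}$.

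Next I would observe that $j_*$ commutes with the \'etale base change to $U_i$ (flat base change for the open immersion $j$, whose complement has codimension $\geq 2$ as noted), so that $\bar{\Ll}|_{U_i}$ agrees with the pushforward of $\Pp$ along $j$. By Theorem \ref{thm:stackyarinkin} (equivalently Theorem \ref{thm:parinkin} applied to the compactified Jacobian of the family of integral spectral curves over $U_i$), this pushforward is exactly the Arinkin-Poincar\'e sheaf $\Pb$, which is Cohen-Macaulay on $\Picb \times_{U_i} \Picb$ — here the hypothesis $p > 2n^2(h-1)+1 \geq 2g-1$ is what licenses the application, $g$ being the arithmetic genus of the spectral curve (this genus estimate is deferred to Lemma \ref{lemma:genus}). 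Since the splitting-twist is by a line bundle, which is locally free and hence Cohen-Macaulay, and tensoring a Cohen-Macaulay sheaf by a line bundle preserves the Cohen-Macaulay property, we conclude $\bar{\Ll}|_{U_i}$ is Cohen-Macaulay over every closed point of $U_i$.

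Finally, Cohen-Macaulayness of a coherent sheaf is a fibrewise condition that can be checked on an \'etale (in particular, faithfully flat) cover, so the local statements over the $U_i$ glue to give that $\bar{\Ll}$ is Cohen-Macaulay. The main obstacle is making the \'etale-local identification of $\bar{\Ll}$ with $\Pb$ precise: one must check that the chosen extension $\Ll$ over $\intSplit \times_{\intA}\intHiggs$ (defined up to pullback of a line bundle from $\intA$, which is trivial since $\Pic(\intA)=1$) really does restrict, after trivializing the splitting, to the Poincar\'e bundle on the codimension-$\geq 2$ complement's open part — and that forming $j_*$ is compatible with both the descent and the extension-from-the-smooth-locus procedure of Theorem \ref{thm:stackyarinkin}. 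Once this compatibility is in hand, the descent argument is routine.
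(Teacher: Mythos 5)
Your proposal is correct and takes essentially the same approach as the paper. The paper's own proof is a one-line invocation of precisely the argument you spell out: choose an \'etale-local section of $\intSplit \to \intA$, use the displayed identity relating $\bar{\Ll}$ to $p_{13}^*\Pb$ to identify $\bar{\Ll}$ \'etale-locally with (a line-bundle twist of) Arinkin's Poincar\'e sheaf, and then descend Cohen-Macaulayness, with Theorem \ref{thm:parinkin} and Lemma \ref{lemma:genus} supplying the needed $p > 2g-1$ bound.
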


\begin{proof}
As we have seen above it is possible to choose a splitting of $\D$ \'etale locally on $\intA$, which sends $\Lb$ to $\Pb$. Since the latter sheaf is Cohen-Macaulay and it differs from $\Lb$ only by tensoring with a locally free sheaf, we conclude that \'etale locally $\Lb$ is Cohen-Macaulay too. According to Corollary 2.1.8 of \cite{MR1251956}, the coherent sheaf $\Lb$ is Cohen-Macaulay if and only if its restriction to the formal neighbourhood of each closed point is Cohen-Macaulay. But since $k$ is assumed to be algebraically closed, there are no non-trivial \'etale coverings of formal neighbourhoods of closed points (\cite[Prop. 4.4]{MR559531}). We may therefore conclude that $\Lb$ is Cohen-Macaulay when restricted to the formal fibres of $\intLoc \times_{\intA}\intLoc \to \intA$, which allows us to conclude that it is Cohen-Macaulay itself.
\end{proof}

\begin{lemma}\label{lemma:GL}
The Fourier-Mukai functor $\Phi_{\Lb}\colon D^b_{coh}(\intLoc,\Oo) \to D^b_{coh}(\intHiggs,\D_{\Bun})$ is an isomorphism.
\end{lemma}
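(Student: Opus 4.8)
The plan is to reduce the claim to Arinkin's equivalence via the splittings constructed above. The key point is that $\bar{\Ll}$ differs from the Arinkin-Poincar\'e sheaf $\Pb$ only by the twist coming from the $\D_{\Bun}$-splitting $\Ll$, and since both the relevant sheaves and the equivalence property can be checked after an \'etale base change of $\intA$ on which a section of $\intSplit \to \intA$ exists, we may work entirely on $\Split \times_{\intA} \intHiggs$.

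First I would fix an \'etale cover $\{U_i \to \intA\}$ together with sections $s_i: U_i \to \intSplit$, which exist by Theorem \ref{thm:relative_curve} (combined with Lemma \ref{lemma:fpuo}). Pulling everything back along $s_i$, the identification $\phi$ of Theorem \ref{thm:main} becomes an isomorphism $U_i \times_{\intA} \intLoc \cong U_i \times_{\intA} \intHiggs$, and under this the chosen splitting $\Ll$ specializes to a $p_2^*\D_{\Bun}$-module $\Ll_i$ on $U_i \times_{\intA} \intHiggs$, while $\bar{\Ll}$ specializes to $j_*\Ll_i$. The displayed identity $(\phi \times_{\intA} \id)^* \Hhom_{p_3^*\D_{\Bun}}(p_{13}^*\Ll, p_{23}^*\bar{\Ll}) = p_{13}^*\Pb$ then shows that, after this base change, the functor $\Psi_{\bar{\Ll}}$ is carried, under the equivalences $D^b_{\coh}(\intLoc,\Oo) \cong D^b_{\coh}(U_i \times_{\intA} \intHiggs, \Oo)$ (via $\phi$) and $D^b_{\coh}(\intHiggs,\D_{\Bun}) \cong D^b_{\coh}(U_i \times_{\intA} \intHiggs, \Oo)$ (via the splitting $\Ll_i$, which trivializes the Azumaya algebra $p_2^*\D_{\Bun}$), precisely to the Fourier-Mukai functor $\Psi_{\Pb}$ associated to the Arinkin-Poincar\'e sheaf of the family $\intHiggs \to \intA$ of compactified Jacobians of integral spectral curves.

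Next I would invoke Theorem \ref{thm:stackyarinkin}: since the spectral curves over $\intA$ are integral with planar singularities (Lemma \ref{lemma:pfcf}) and the genus bound translates, via the characteristic hypothesis $p > 2n^2(h-1)+1$ and Lemma \ref{lemma:genus}, into the estimate $p > 2g-1$ needed for Arinkin's construction, the kernel $\Pb$ induces an autoequivalence of $D^b_{\coh}(\Picb/\intA)$. Restricting to the relevant connected components (rank and degree), this gives that $\Psi_{\Pb}$, hence $\Psi_{\bar{\Ll}}$ after the \'etale base change, is an equivalence. Finally, since being an equivalence of derived categories of coherent sheaves can be tested after a faithfully flat (here \'etale) base change of the parameter scheme $\intA$ — the adjoint kernel $\bar{\Ll}^{\vee}$ and the natural transformations witnessing $\Psi_{\bar{\Ll}^{\vee}} \circ \Psi_{\bar{\Ll}} \cong \id$ and $\Psi_{\bar{\Ll}} \circ \Psi_{\bar{\Ll}^{\vee}} \cong \id$ descend by the commutation of the Fourier-Mukai formalism with flat base change (Mukai's Lemma and the base change formula) — we conclude that $\Psi_{\bar{\Ll}}$ is an isomorphism over $\intA$ itself.

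The main obstacle I expect is bookkeeping rather than conceptual: one must check carefully that the extension of $\Ll$ from the smooth locus $\Split \times_{\smA} \smHiggs$ to $\Split \times_{\intA} \Picb$ (used in Definition \ref{defi:kernel}) is genuinely compatible, under $\phi$, with Arinkin's push-forward description $\Pb = j_*\Pp$ from the open locus where one of the factors is the honest Jacobian — i.e. that the codimension-$\geq 2$ argument for the complement of $\Split \times_{\A^{(1)}} \intHiggs$ in $\intHiggs \times_{\A^{(1)}} \intHiggs$ (already noted before Definition \ref{defi:kernel}) matches the codimension estimate for the complement of the smooth locus of $\Picb$, so that the two push-forwards agree. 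Granting that identification, together with the Cohen-Macaulay property of $\bar{\Ll}$ (which guarantees $j_*$ behaves well and no higher derived push-forwards intervene), the descent argument goes through and the Fourier-Mukai functor $\Psi_{\bar{\Ll}}$ is an equivalence.
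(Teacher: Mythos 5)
Your argument is correct and is, in expanded form, exactly what the paper does: it uses the displayed identity $(\phi \times_{\intA} \id)^*\Hhom_{p_3^*\D_{\Bun}}(p_{13}^*\Ll,p_{23}^*\bar{\Ll}) = p_{13}^*\Pb$, the existence of an \'etale-local section of $\intSplit \to \intA$, Theorem \ref{thm:stackyarinkin} to obtain that $\Psi_{\Pb}$ is an equivalence on compactified Picard stacks of the integral spectral-curve family, and descent along the cover to conclude. The paper states this in a single sentence ("\'Etale locally on $\intA$ we can choose a section of $\intSplit \to \intA$. Using descent theory, we conclude..."), and your write-up is a faithful unwinding of that same reasoning, including the matching of codimension-$\geq 2$ loci needed to compare $j_*\Ll$ with Arinkin's $\Pb = j_*\Pp$.
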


\begin{proof}
According to Theorem C in \cite{Arinkin:2010uq} the inverse integral kernel to $\Pb$ is a complex supported in a single cohomological degree. This allows us to evoke faithfully flat descent in order to construct the inverse kernel for $\Lb$, which would require more care for a general object of the derived category. 

We denote by $\phi\colon S \to \intA$ an \'etale covering, such that $\D$ splits when pulled back to $\phi^*\intHiggs = S \times_{\intA} \intHiggs$. The existence of such a $\phi$ is guaranteed by Theorem \ref{thm:main}. In order to simplify notation we denote the base change of an  arbitrary morphism $Z \to \A^{(1)}$ along $S \to \A^{(1)}$ by $\phi^*Z \to S$. The same convention applies to objects defined over $Z$. The Fourier-Mukai transform $\Phi_{\phi^*\Lb}$ is an equivalence of categories of Fourier-Mukai type, according to the fact that $\phi^*\Lb$ can be naturally related to $\Pb$. Moreover we see that there is an integral kernel $\bar{\mathcal{K}}'$ defined over $$\phi^*\intLoc \times_{S} \phi^*\intHiggs,$$ which gives rise to the quasi-inverse equivalence $\Phi_{\phi^*\Lb}$. Since $\Phi_{\phi^*\Lb}$ is a Fourier-Mukai transform corresponding to a kernel pulled back along $\phi$, it is naturally equipped with descent data along the morphism $\phi$. The quasi-inverse $\Phi_{\phi^*\Lb}^{-1}$ is therefore endowed with the same descent data, which its integral kernel $\bar{\mathcal{K}}'$ inherits. As we have already pointed out above, Theorem C in \cite{Arinkin:2010uq} implies that $\bar{\mathcal{K}}'$ is a coherent sheaf up to shift, which allows us to descend it to a complex $\bar{\mathcal{K}}$ on $$\intLoc \times_{\intA} \intHiggs$$ (which is again just a coherent sheaf up to shift) by faithfully flat descent. We conclude that $\Phi_{\Lb}$ and $\Phi_{\bar{\mathcal{K}}}$ are inverse to each other, since the cohomological computations involved in checking this can be verified \'etale locally on the base according to the flat base change theorem.
\end{proof}

In order to explain the restriction we have to put on the characteristic of the base field, we need to calculate the arithmetic genus of spectral curves. From the characteristic zero theory one expects the arithmetic genus of spectral curves to be $n^2(h-1) + 1$, where $n$ denotes the rank of the Higgs bundles respectively local systems, since the genus of a smooth spectral curve equals the dimension of its Picard, that is, the corresponding Hitchin fibre. Due to the Lagrangian property of a Hitchin fibre, this dimension is half the dimension of the total space, that is, the same as the dimension of the moduli space of vector bundles $n^2(h-1) + 1$. For general fields we arrive at the same number by a simple Riemann-Roch computation.

\begin{lemma}\label{lemma:genus}
The arithmetic genus $g$ of a spectral curve $Y_a$ of a curve $X$ of genus $h$ and a Higgs bundle of rank $n$ is given by
$g = n^2(h-1) + 1$ 
\end{lemma}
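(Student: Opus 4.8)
The plan is to compute the arithmetic genus $p_a(Y_a) = 1 - \chi(\Oo_{Y_a})$ by reducing everything to a Riemann--Roch calculation on $X$ via the finite flat projection $\pi = \pi_a : Y_a \to X$. The point is that $Y_a$ lives in the total space of $\OmegaX$, and its structure sheaf is completely controlled by the decomposition already recorded in the excerpt.

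First I would recall, from the discussion preceding Lemma \ref{lemma:pfcf}, that Zariski-locally on $X$ one trivializes $\Theta_X$ and obtains the canonical identification of $\Oo_X$-modules
$$ \pi_* \Oo_{Y_a} = \bigoplus_{i=0}^{n-1} \Theta_X^{\otimes i}. $$
In particular the right-hand side does not depend on $a$, so neither does $\chi(\Oo_{Y_a})$ (alternatively this constancy follows from flatness of $\phi$ together with Lemma \ref{lemma:pfcf}), and it suffices to compute it once. Since $\pi$ is finite, hence affine, we have $H^j(Y_a,\Oo_{Y_a}) = H^j(X, \pi_*\Oo_{Y_a})$ for all $j$, and therefore $\chi(\Oo_{Y_a}) = \sum_{i=0}^{n-1} \chi(X, \Theta_X^{\otimes i})$.

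Next I would apply Riemann--Roch on the genus-$h$ curve $X$. Using $\deg \Theta_X = -\deg\OmegaX = 2 - 2h$ we get $\chi(X,\Theta_X^{\otimes i}) = i(2 - 2h) + (1 - h)$, and summing this arithmetic progression over $i = 0, \dots, n-1$ gives
$$ \chi(\Oo_{Y_a}) = (2 - 2h)\frac{n(n-1)}{2} + n(1 - h) = (1 - h)\bigl(n(n-1) + n\bigr) = n^2(1 - h). $$
Hence $p_a(Y_a) = 1 - \chi(\Oo_{Y_a}) = 1 - n^2(1 - h) = n^2(h - 1) + 1$, as claimed.

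There is essentially no serious obstacle here; the argument is a routine Euler-characteristic computation. The only points worth flagging are that ``arithmetic genus'' must be read as $1 - \chi(\Oo_{Y_a})$ — which is well defined and constant in the family $\phi$ even when $Y_a$ is singular or non-reduced, so no integrality or connectedness hypothesis is needed — and that the identity $\deg\Theta_X = 2 - 2h$ is precisely where the hypothesis that $h$ is the genus of $X$ enters.
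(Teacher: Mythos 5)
Your proof is correct and follows essentially the same route as the paper: both reduce $\chi(\Oo_{Y_a})$ to $\chi(X,\pi_*\Oo_{Y_a})$ via finiteness of $\pi$, use the decomposition $\pi_*\Oo_{Y_a}\cong\bigoplus_{i=0}^{n-1}\Theta_X^{\otimes i}$, and sum the Riemann--Roch formula $\chi(X,\Theta_X^{\otimes i})=(1-h)+2i(1-h)$. The only cosmetic difference is that the paper first invokes constancy in flat families to reduce to the nilpotent-cone fibre $Y_0$, whereas you note (correctly, and a bit more directly) that the $\Oo_X$-module decomposition is independent of $a$ to begin with.
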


\begin{proof}
Because the arithmetic genus is constant in flat families it suffices to calculate the genus of smooth spectral curves. Let $\pi\colon Y_a \to X$ denote the finite morphism of a smooth spectral curve to $X$. Since $\pi$ is finite, we know that $\chi(\pi_*\Oo_{Y_a}) = \chi(\Oo_{Y_a})$. The right hand side is given by $1 - g$ according to the Riemann-Roch formula. The left hand side is constant in flat families and thus may be computed for a particular spectral curve. If $\Theta_X$ denotes the sheaf of tangent vector fields on $X$, then $$ \pi_*\Oo_{Y_a} = \bigoplus_{i=0}^{n-1}\Theta_X^{\otimes i}. $$
Combining this with the Riemann-Roch formula we compute $$ \chi(\pi_*\Oo_{Y_0}) = \sum_{i=0}^{n-1}(1-h + 2i(1-h)) = n^2(1-h).$$
In particular we obtain that the arithmetic genus of a spectral curve is given by $n^2(h-1)+1.$
\end{proof}

This concludes the proof of Theorem \ref{thm:Langlands}. We finish this section by an important remark, which is proved by exactly the same methods as Theorem \ref{thm:Langlands}.

The proposition below is a relativisation of Lemma \ref{lemma:GL}. The case $S=X$ is of particular importance, since it allows us to formulate the Hecke eigenproperty.

\begin{prop}\label{prop:relative}
Let $S$ be a smooth $k$-scheme locally of finite type. Then we have an equivalence of derived categories
$$ D^b_{coh}(\intHiggs \times T^*S^{(1)},\D_{\Bun \times S}) \cong D^b_{\coh}(\intLoc \times T^*S^{(1)}, \Oo_{\Loc} \boxtimes \D_S), $$
which is induced by the pullback of $\Lb$. 
\end{prop}

\begin{proof}
Let us denote by $\phi\colon \intA \times S \to \intA$ the canonical projection to the first component. Since $S \to \Spec k$ is faithfully flat, the same holds for the base change $\phi$. As in the proof of Lemma \ref{lemma:GL} we denote by $\phi^*Z$ the base change of the $\intA$-scheme $Z$ along the map $\phi$. The $\Oo_{\phi^*\intLoc} \boxtimes \phi^*\D_{\Bun}$-module $\phi^*\Lb$ induces a functor $$D^b_{coh}(\intHiggs \times T^*S^{(1)},\D_{\Bun \times S}) \to D^b_{\coh}(\intLoc \times T^*S^{(1)}, \Oo_{\Loc} \boxtimes \D_S).$$ Using the descent argument of the proof of Lemma \ref{lemma:GL} we conclude that this Fourier-Mukai transform is an equivalence.
\end{proof}

\subsection{The Hecke eigenproperty}\label{hecke}

The equivalence of Theorem \ref{thm:Langlands} can be shown to intertwine two canonical functors with each other. This is expected from the geometric Langlands conjecture over $\mathbb{C}$.

Let $\Ee$ denote the universal vector bundle over $\Loc \times X$. It gives rise to a multiplication functor.

\begin{eqnarray*}
\Wb\colon &D^b_{\mathrm{coh}}(\Loc, \Oo) \to D^b_{\mathrm{coh}}(\Loc \times X, \Oo \boxtimes \D_X)\\
 &M \mapsto  Lp_1^*M \otimes \Ee
\end{eqnarray*}

We define the stack $\Hh$ to be the classifying stack of the data $(E,F,\iota,x)$, such that $E,F \in \Bun$, $\iota\colon E \to F$ is an injection, $x \in X$ and $\coker \iota$ is a skyscraper sheaf of length one.

Note that $\Hh$ is equipped with two natural morphisms
\[
\xymatrix{& \Hh \ar[ld]_q \ar[rd]^p \\
\Bun & & \Bun \times X}
\]
sending $(E,F,\iota,x) \mapsto F$ respectively $(E,F,\iota,x) \mapsto (E,x)$. 

The following remark will be of use later to motivate the definition of Hecke operators in positive characteristic.

\begin{rmk}\label{rmk:Hecke}
The stack of Hecke operators $\Hh$ is actually a moduli stack for a certain type of (quasi-)parabolic bundles. Therefore the corresponding moduli stack of (quasi-)parabolic Higgs bundles is equivalent to the cotangent stack $T^*\Hh$. 
\end{rmk}

We define the Hecke operator $\Hb$ to be the functor:
\begin{eqnarray*}
\Hb\colon &D^b_{\mathrm{coh}}(\Bun, D) \to D^b_{\mathrm{coh}}(\Bun \times X, D)\\
 &M \mapsto  LRp_*LRq^!M.
\end{eqnarray*}
Whereas the definition of $\Wb$ makes immediately sense for the smaller stack $\intLoc$, it is not obvious that $\Hb$ descends to a functor
$$D^b_{\mathrm{coh}}(\intHiggs, \D) \to D^b_{\mathrm{coh}}(\intHiggs \times X, \D_{\Bun \times X}).$$

In order to see that this is the case we need to remind the reader of the definition of the functors $p_*$ and $q^!$, respectively their derived versions $LRp_*$ and $LRq^!$, as defined in \cite{Bezrukavnikov:fr}.

\subsubsection{A reformulation of the Hecke operator}

In order to define the functor $q^!\colon \D_{\Bun}-\Mod \to \D_{\Hh}-\Mod$ we have to consider the morphism $$dq^{(1)}\colon q^{(1),*}T^*\Bun^{(1)} \to T^*\Hh^{(1)},$$ and use that $q^{(1),*}\D_{\Bun}$ and $dq^{(1),*}\D_{\Hh}$ are canonically Morita equivalent \cite[Prop. 3.7]{Bezrukavnikov:fr}. 

Analogously we need to consider $$dp^{(1)}\colon p^{(1),*}T^*(\Bun \times X)^{(1)} \to T^*\Hh^{(1)},$$ and the Morita equivalence of $dp^{(1),*}\D_{\Hh}$ with $p^{(1),*}\D_{\Bun \times X}$ to define $$p_*\colon \D_{\Hh}-\Mod \to \D_{\Bun \times X}-\Mod.$$

The most natural way to deal with those two morphisms simultaneously is to look at their fibre product 

\[
\xymatrix{ Z \ar[r] \ar[d] & q^{*}T^*\Bun \ar[d]\\
 p^{*}T^*(\Bun \times X) \ar[r] & T^*\Hh.}
\]

The stack $Z$ is the domain of the morphisms\footnote{Note that we use the notation $p$ and $q$ to denote morphisms which are really base changes thereof.} $$\alpha_1 = q \circ \pr_1\colon Z \to \Higgs(X)$$ and $$\alpha_2 = p \circ \pr_2\colon Z \to \Higgs(X) \times T^*X.$$ On $Z^{(1)}$ we then have three natural Azumaya algebras, $\alpha_1^*\D_{\Bun}$, $\alpha_2^*\D_{\Bun \times X}$ and $\pi^*\D_{\Hh}$, where $\pi\colon Z^{(1)} \to \Hh^{(1)}$ denotes the structural morphism of the fibre product $Z^{(1)}$. By construction, all these algebras are pairwise Morita equivalent \cite[Prop. 3.7]{Bezrukavnikov:fr}.

The base change formula implies that $$\Hb\colon M \mapsto R\alpha_{2,*}L\alpha_1^*M,$$ where the application of Morita equivalences (which involves tensoring by a splitting) has been suppressed to simplify notation. We will later turn this into a definition of $\Hb$. Let us record the following observation of \cite[4.16]{Bezrukavnikov:fr}.
\begin{lemma}\label{lemma:Z}
The stack $Z$ is given by the lax 2-functor sending an $\A$-scheme $S \to \A$ to the groupoid classifying $\{x\colon S \to X \times \A, L_1 \subset L_2\}$, such that $\pi_*L_1, \pi_*L_2 \in \Higgs$ and $x^*(L_2/L_1)$ is locally free of rank $1$. Here we make use of the BNR correspondence (Theorem \ref{thm:bnrHiggs}) to describe Higgs bundles in terms of sheaves on the spectral curves.
\end{lemma}
\begin{proof}
We prove this lemma on the level of $k$-points and leave the only notationally different case of $S$-families to the reader. According to remark \ref{rmk:Hecke} the stack $T^*\Hh$ classifies the data $$(E,F,\theta,x,\xi),$$ where $(x,\xi) \in T^*X^{(1)}$, $(E,F,x) \in \Hh$ and $$\theta\colon F \to F \otimes \OmegaXone(x),$$ such that $\res \theta$ is a nilpotent endomorphism of the fibre $F \otimes k_x$ factoring through a linear map $$ F/E \to E \otimes k_x. $$
The morphism $T^*(\Bun \times X) \times_{\Bun \times X}\Hh \to T^*\Hh$
is given by $$[(F,\theta,x,\xi),(E,F,x)] \mapsto (E,F,\theta,x,\xi).$$
Note that $\res \theta = 0$ in this particular case, since $\theta$ is a non-singular Higgs field on $F$. The morphism $T^*\Bun \times_{\Bun} \Hh \to T^*\Hh$ can be described as $$ [(E,\theta),(E,F)] \mapsto (E,F,\theta',x,\xi), $$
where we use that $E|_{X - \{x\}} \cong F|_{X - \{x\}}$ and therefore the Higgs field $\theta$ on $E$ induces a Higgs field $\theta'$, possibly having a simple pole at $x$ on $F$. By construction this is a (quasi-)parabolic Higgs bundle. The $1$-form $\xi$ at $x$ is the eigenvalue of the Higgs field $\theta'$ on the length one quotient $F/E$. Note that this is a sensible definition since $\theta'$ preserves $E$ by construction, and that $\res(\theta)\colon E/F \to E/F$ is the zero map according to the axioms of parabolic Higgs bundles.

Computing the base change $Z$ now, with this information at hand, we obtain that $Z$ classifies 
$$ (E,F,\theta,x,\xi), $$
where $(F,\theta)$ is a Higgs bundle, $E \subset F$ is preserved by $\theta$ and $F/E$ is a length one sheaf acted on by $\theta$ with eigenvalue $\xi$.
\end{proof}

Finally we obtain a definition of $\Hb$ which can be used in our context. We observe that the two morphisms to the Hitchin base $Z \to \A$, given by $\chi \circ \alpha_1$ and $\chi \circ \pr_1 \circ \alpha_2$ agree. This is a consequence of Lemma \ref{lemma:Z}, as a point of $Z$ consists of two Higgs bundles identified away from a point $x$. In particular they have the same characteristic polynomial. This allows us to view $Z^{(1)}$ as $\A^{(1)}$-stack, and in particular to form the base change over the integral locus. 

Using $Z^{(1)}$ as a correspondence, we obtain a functor $$ \Hb\colon D^b_{coh}(\intHiggs,\D_{Bun}) \to D^b_{coh}(\intHiggs \times T^*X^{(1)},\D_{Bun \times X}). $$
Interpreting the category on the left-hand-side as a derived category of $D$-modules on $\Bun$ supported on the integral locus, and the right-hand-side as an analogous category of $D$-modules on $\Bun \times X$, we can be satisfied with $\Hb$ as a positive characteristic analogue of Hecke functors. The remainder of this paper is devoted to the proof of the following theorem, which is a formal consequence of Theorem \ref{thm:Hecke} below.

\begin{thm}\label{thm:HW}
The equivalence of Theorem \ref{thm:Langlands} intertwines $\Hb$ with $\Wb$, that is, it gives rise to the following 2-commutative diagram of derived categories
\[
\xymatrix{ D^b_{\coh}(\intLoc, \Oo) \ar[r] \ar[d] & D^b_{\coh}(\intLoc \times T^*X^{(1)}, \Oo_{\Loc} \boxtimes \D_X ) \ar[d] \\ 
D^b_{\coh}(\intHiggs,\D_{\Bun}) \ar[r] &  D^b_{\coh}(\intHiggs \times T^*X^{(1)}, \D_{\Bun \times X}).}
\]
\end{thm}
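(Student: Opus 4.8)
The plan is to reduce Theorem~\ref{thm:HW} to the Hecke eigenproperty of Arinkin's Fourier--Mukai transform, which we record separately as Theorem~\ref{thm:Hecke}; granting the latter, the intertwining follows formally from Mukai's Lemma together with faithfully flat descent over the Hitchin base. First I would rewrite all four functors in the diagram as integral transforms in the sense of the previous subsections. The multiplication functor $\Wb$ is the Fourier--Mukai functor on $\intLoc$ whose kernel is the universal bundle $\Ee$, equipped with its $\Oo_{\Loc}\boxtimes\D_X$-structure; the horizontal equivalences are $\Psi_{\bar{\Ll}}$ of Theorem~\ref{thm:Langlands} and the relative version over $T^*X^{(1)}$ supplied by the Remark concluding the previous subsection (the case $S=X$), with kernels $\bar{\Ll}$ and $\bar{\Ll}\boxtimes\Oo_{T^*X^{(1)}}$; and $\Hb$ is the transform with kernel $\Oo_{Z^{(1)}}$, where $Z^{(1)}$ is the correspondence described in Lemma~\ref{lemma:Z} and the $\D_{\Bun}$- and $\D_{\Bun\times X}$-twists are the ones produced by the pairwise Morita equivalences between $\alpha_1^*\D_{\Bun}$, $\pi^*\D_{\Hh}$ and $\alpha_2^*\D_{\Bun\times X}$ of the reformulation preceding Lemma~\ref{lemma:Z}. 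By Mukai's Lemma the $2$-commutativity of the diagram becomes an isomorphism of convolution kernels on $\intLoc\times_{\intA}\intHiggs\times_{\intA}T^*X^{(1)}$: the convolution of $\bar{\Ll}$ with $\Oo_{Z^{(1)}}$ must be isomorphic to the convolution of the kernel of $\Wb$ with the relative kernel $\bar{\Ll}\boxtimes\Oo_{T^*X^{(1)}}$, all $\D$-twists understood.

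Next I would descend to the \'etale-local picture over $\intA$. As in the proof of Theorem~\ref{thm:main}, choose an \'etale cover $\{U_i\}$ of $\intA$ over which $\intSplit$ admits a section; since both convolution kernels are coherent, it suffices to produce the isomorphism after this faithfully flat base change. Over $U_i$ the chosen splitting identifies both $\intLoc|_{U_i}$ and $\intHiggs|_{U_i}$ with an open substack of $\Picb(Y/U_i)$, trivializes $\D_{\Bun}$ (Corollary~\ref{cor:smthsplit}) and $\D_X$ into matrix algebras, identifies $\bar{\Ll}$ with the pullback of Arinkin's Poincar\'e sheaf $\Pb$ of Theorem~\ref{thm:stackyarinkin} --- here one uses the uniqueness of the extension $\bar{\Ll}=j_*\Ll$ of Definition~\ref{defi:kernel}, which holds because $\Pic(\intA)=1$ --- and, by Lemma~\ref{lemma:Z}, identifies $Z^{(1)}$ with the modification correspondence on $\Picb(Y/U_i)$ classifying inclusions $L_1\subset L_2$ of pure sheaves on the spectral curve with length-one quotient supported at a variable point of $Y_a$. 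Under these identifications $\Hb$ becomes the Hecke modification operator on compactified Jacobians, while $\Wb$ becomes tensoring an integral kernel on $\Picb\times_{U_i}\Picb$ by the Abel--Jacobi pullback of the Poincar\'e sheaf; the required kernel isomorphism over $U_i$ is then precisely the assertion that Arinkin's transform $\Ff$ of Theorem~\ref{thm:stackyarinkin} carries this tensoring operation to the modification correspondence. That assertion is the content of Theorem~\ref{thm:Hecke} below, whose formulation matches \cite[sect.~6.4]{Arinkin:2010uq}; feeding it back through the previous paragraph proves the theorem.

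It remains to check that the $\D_{\Bun\times X}$-module structures on the two sides of the kernel isomorphism agree on the nose, and not only after forgetting down to $\Oo$. This is again a computation over $U_i$, where all Azumaya algebras in sight are matrix algebras and all splittings are free modules, so the compatibility of the Morita-equivalence twists appearing in the definition of $\Hb$ with the splitting $\Ll$ and its extension $\bar{\Ll}$ follows from Corollary~\ref{cor:smthsplit} and the uniqueness of $\bar{\Ll}$ noted above; no further input is needed.

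The hard part is Theorem~\ref{thm:Hecke} itself: showing that Arinkin's Poincar\'e-sheaf transform is a Hecke eigensheaf transform for the modification correspondence on the compactified Jacobians of integral planar curves. On the open locus where $Y_a$ is smooth this is the classical fact that Fourier--Mukai on an abelian variety sends translation to tensoring by a character line bundle; the substance is to control the boundary, where the correspondence $Z$ parametrizes modifications by sheaves supported at the singular points of $Y_a$. The Cohen--Macaulayness of $\Pb$ together with the codimension $\geq 2$ estimate for the non-smooth boundary --- already exploited in Definition~\ref{defi:kernel} --- should allow one to recover the eigenproperty from the smooth locus by push-forward, and it is exactly this reduction that forces the hypothesis $p>2g-1=2n^2(h-1)+1$, through Lemma~\ref{lemma:genus} and the Cohen--Macaulayness of the isospectral Hilbert scheme.
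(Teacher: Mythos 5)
Your proposal takes the same route as the paper: the text introduces Theorem~\ref{thm:HW} as a ``formal consequence'' of Theorem~\ref{thm:Hecke} and does not write out the reduction, and what you have done is supply the missing details --- rewrite all four functors as integral transforms, invoke Mukai's Lemma to turn $2$-commutativity into an isomorphism of convolution kernels, and descend \'etale-locally over $\intA$ through $\intSplit$ to transport the whole picture onto Arinkin's modification correspondence on compactified Jacobians. That structure is correct and is indeed what ``formal consequence'' means here.

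One imprecision is worth fixing. After you choose a local splitting over $U_i$, the multiplication functor $\Wb$ (tensoring by the universal local system $\Ee$) becomes, under the twisted BNR correspondence, the functor $-\boxtimes^L \Qb$ where $\Qb$ is the universal family of rank-one torsion-free sheaves on $\Picb\times_{U_i}Y$; it does \emph{not} become tensoring by the Abel--Jacobi pullback of the Poincar\'e sheaf. Those two agree only over the smooth locus of the spectral curve --- that identification is precisely the content of Lemma~\ref{lemma:smoothHecke}, which is the first reduction step in the paper's proof of Theorem~\ref{thm:Hecke} --- and since Theorem~\ref{thm:Hecke} is itself phrased in terms of $\Qb$, the match should be made with $\Qb$ directly. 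This is a cosmetic repair and does not disturb the rest of your reduction. Your closing sketch of Theorem~\ref{thm:Hecke} is also a fair summary, though the paper's proof is more than a push-forward from the smooth locus: it applies Arinkin's Cohen--Macaulayness criterion (Lemma~\ref{lemma:arinkin}) to the composite kernel $\Theta[g]$, which requires the syntomic property of $\Hh\to\M_g$ (Lemma~\ref{lemma:lci}) and the support and amplitude estimates of Lemmas~\ref{lemma:codim}--\ref{lemma:upperbound}, not merely the Cohen--Macaulayness of $\Pb$ itself.
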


\subsubsection{The Hecke eigenproperty of the Arinkin-Poincar\'e sheaf}\label{sub:arinkinhecke}
In order to establish that the equivalence of Theorem \ref{thm:Langlands} intertwines the Hecke operator $\Hb$ with a multiplication operator, we will show that the Arinkin-Poincar\'e sheaf $\Pb$ satisfies a similar property. Given a $k$-scheme $S$ of finite type and a flat family of integral curves with planar singularities $C \to S$ we can construct the compactified relative Picard stack $\Picb \to S$, which classifies flat families of rank one torsion free sheaves $L$ on $C/S$ (Section \ref{bnr}). Let us denote the universal family  on $\Picb \times_S C$ by $\Qb$. The fibre product $\Picb \times_S \Picb$ is endowed with a Cohen-Macaulay sheaf $\Pb_{C/S}$, which induces an equivalence $\Phi_{\Pb_{C/S}}\colon D^b_{\coh}(\Picb) \to D^b_{\coh}(\Picb)$ (\cite[Thm. C]{Arinkin:2010uq}, Theorem \ref{thm:stackyarinkin}).
\begin{defi}\label{defi:hecke-arinkin}
The stack $\Hh$ classifies quadruples $(L_1,L_2,\iota,x)$, such that $L_i \in \Picb$ and $\iota\colon L_1 \subset L_2$ and $\coker \iota$ is a length one coherent sheaf supported at $x$. We have a natural morphism $\Hh \to \Picb \times \Picb \times C$ and composing it with the projections $p_2$, respectively $p_{13}$ we obtain morphisms $q\colon \Hh \to \Picb$ and $p\colon \Hh \to \Picb \times_S C$. This allows us to define the \emph{Hecke functor} $$ \Hb_{C/S} = Rp_* \circ Lq^*\colon D^b_{\coh}(\Picb) \to D^b_{\coh}(\Picb \times_S C).  $$
\end{defi}
With this definition at hand, we can state the following theorem, which has been known to Arinkin.
\begin{thm}\label{thm:Hecke}
The Fourier-Mukai transform $\Phi_{\Pb}$ intertwines the Hecke functor $\Hb_{C/S}$ with the multiplication functor $- \boxtimes^L \Qb$. In other words, we have a 2-commutative diagram of categories:
\[
\xymatrix{ D^b_{\coh}(\Picb) \ar[r]^{- \boxtimes^L \Qb} \ar[d]_{\Phi_{\Pb}} & \;\; D^b_{\coh}(\Picb \times_S C) \ar[d]^{\Phi_{\Pb_{C\times_S C / C}}} \\ 
D^b_{\coh}(\Picb) \ar[r]_{\Hb_{C/S}} &  D^b_{\coh}(\Picb \times_ S C)}
\]
\end{thm}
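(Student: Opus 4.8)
The plan is to reduce the assertion to the defining property of the Poincaré sheaf on the smooth locus, where it is the classical statement that the Fourier-Mukai transform interchanges tensoring by $\Qq$ with the Hecke (= tensoring-by-torsor) operation for abelian varieties, and then extend the resulting isomorphism of kernels over the compactified Picard stack by a codimension/Cohen-Macaulay argument. Concretely, by Mukai's Lemma both composite functors in the diagram are themselves Fourier-Mukai functors, so it suffices to produce a canonical isomorphism of the two integral kernels on $\Picb \times_S \Picb \times_S C$ (viewing $\Ff_{C \times_S C/C}$ and $\Hb_{C/S}$ relatively over $C$). The kernel of $\Ff_{C\times_S C/C} \circ (- \boxtimes^L \Qb)$ is the convolution $\Qb * \Pb_{C\times_S C/C}$, while the kernel of $\Hb_{C/S} \circ \Ff_{C/S}$ is $\Pb * \Hh_{C/S}$, where I abusively write $\Hh_{C/S}$ for the structure sheaf of the Hecke correspondence pushed to $\Picb \times_S \Picb \times_S C$. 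I would first compute both convolutions explicitly over the open locus $J \times_S J \subset \Picb \times_S \Picb$ of honest line bundles: there the Hecke alteration by a point $x$ is the translation-and-twist operation, and the classical Fourier-Mukai calculus for the (relative) Jacobian of a family of smooth curves — which is the content of the base case treated in \cite{Arinkin:2010uq} — gives a canonical identification of the two kernels restricted to $J \times_S J \times_S C^{\sm}$.

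The second step is to upgrade this isomorphism on the smooth locus to an isomorphism on all of $\Picb \times_S \Picb \times_S C$. Here I would invoke the same mechanism used throughout the construction of $\Pb$: since the generic member of the family $C \to S$ is smooth (and if not, one may reduce to that case, as integral planar curves deform to smooth ones), the complement of $J \times_S J$ in $\Picb \times_S \Picb$ has codimension $\geq 2$, and $\Pb$, being Cohen-Macaulay over each closed point of $S$, satisfies $\Pb = j_* (\Pb|_{\text{smooth locus}})$. Both convolution kernels are, by flat base change and the explicit geometry of the Hecke correspondence, Cohen-Macaulay sheaves of the expected depth (the correspondence $\Hh \to \Picb \times_S C$ is an affine bundle, and convolution with a Cohen-Macaulay kernel along a morphism with Cohen-Macaulay fibres preserves the Cohen-Macaulay property after the relevant pushforward, which one checks fibrewise). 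Hence each kernel equals the pushforward of its own restriction to the smooth locus, and an isomorphism of sheaves agreeing on a dense open with codimension-$\geq 2$ complement between two such reflexive-type sheaves extends uniquely. Applying $j_*$ to the isomorphism from Step 1 therefore yields the desired global isomorphism of kernels, and Mukai's Lemma then translates this into the $2$-commutativity of the diagram.

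The main obstacle I expect is the bookkeeping in Step 1: one must set up the relative Hecke correspondence $\Hh \to \Picb \times_S C$ carefully so that the convolution $\Pb * \Hh_{C/S}$ genuinely computes $p_* q^*$, verify that $\Qb$ really is the universal sheaf appearing as the Fourier-Mukai transform of the structure sheaf of the Hecke locus (this is the incarnation of the fact that the Abel-Jacobi morphism $A : C \to \Picb$ pulls the Poincaré sheaf back to the universal torsion-free sheaf, via $A^* : \Picb^{\dual} \to \Pic$ from Theorem \ref{thm:stackyarinkin}), and track the twist by $C$ in the second Fourier-Mukai transform $\Ff_{C\times_S C/C}$. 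The Cohen-Macaulayness bookkeeping in Step 2 is routine once one knows the fibre dimensions, but it must be done, since the entire argument rests on the uniqueness of the extension from the smooth locus; this is also precisely where the hypothesis $p > 2g-1$ enters, through the construction of $\Pb$ itself in Theorem \ref{thm:parinkin}.
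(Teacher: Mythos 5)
Your Step 1 and the overall strategy (reduce via Mukai's Lemma to an isomorphism of integral kernels, verify it on the smooth locus, extend using a Cohen--Macaulay/codimension-$\geq 2$ argument) are the same as the paper's, and your identification of the smooth-locus statement and the role of the Abel--Jacobi map is essentially the content of Lemma \ref{lemma:smoothHecke}. The gap is in Step 2, which you dismiss as ``routine bookkeeping'' but which is in fact the core of the argument, and your sketch there contains two claims that are not true as stated. First, the Hecke correspondence $\Hh \to \Picb \times_S C$ is not an affine bundle: over the smooth locus $p|_{\Hh^{\sm}}$ is an isomorphism onto its image (Lemma before \ref{lemma:smoothHecke}), and over all of $\Picb \times_S C$ the source is shown only to be syntomic and fibrewise irreducible of the right dimension over $\M_g$ (Lemma \ref{lemma:lci}), which requires Briançon's results on punctual Hilbert schemes and a versal deformation into $\mathbb{A}^2$. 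Second, the slogan ``convolution with a Cohen--Macaulay kernel along a morphism with Cohen--Macaulay fibres preserves Cohen--Macaulayness after pushforward'' is not a valid general principle, and no such direct transfer is used.

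What the paper actually does to make the extension work is: (i) replace $S$ by the universal base $\M_g$; (ii) show $L(q\times\id)^*\Pb$ is Cohen--Macaulay using Tor-finiteness (Lemma \ref{lemma:pullCM}); (iii) prove a codimension bound $\codim\supp\Theta \geq g$ for the composite kernel $\Theta$ via a $\G_m$-equivariance argument---nonvanishing hypercohomology would split the extension $0 \to \G_m \to T \to \Pic \to 0$, forcing $F_1|_{C^{\sm}} \cong F_2|_{C^{\sm}}$---combined with the stratification of $\M_g$ by geometric genus (Lemma \ref{lemma:codim}); (iv) bound the amplitude $\Theta \in D^{\leq g}$ using the fibre dimensions of $\Hh$ (Lemma \ref{lemma:upperbound}); and (v) apply Arinkin's homological criterion for Cohen--Macaulayness (Lemma \ref{lemma:arinkin}), with the dual amplitude bound verified by Grothendieck--Serre duality. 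Items (iii)--(v) are precisely what replaces your incorrect affine-bundle and CM-preservation claims, and omitting them leaves the extension step unproven.
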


The subsequent proof uses the strategy developed in \cite{Arinkin:2010uq}. By the virtue of the base change theorem, we replace $S$ by the moduli stack $\M_g$ of integral curves with planar singularities of arithmetic genus $g$ and study the universal family $\C \to \M_g$. In this case $\Picb$ is a smooth stack (cf. \cite[Thm B.2]{MR1658220}).

Translating Theorem \ref{thm:Hecke} to integral kernels, we see that it suffices to show the following proposition.

\begin{prop}\label{prop:Hecke}
We have
$R(p \times_{\M_g} \id_{\Picb})_*L(q \times_{\M_g} \id_{\Picb})^*\Pb = \Pb \boxtimes^L \Qb.$
\end{prop}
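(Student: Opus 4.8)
The plan is to adapt Arinkin's proof of autoduality (\cite{Arinkin:2010uq}) to the Hecke situation, reducing the asserted identity of sheaves to a classical computation on the locus where all torsion-free sheaves in play are line bundles. As a first step I would pass to the universal situation, replacing $S$ by the moduli stack $\M_g$ of integral curves with planar singularities of arithmetic genus $g$ and working with the universal curve $\C \to \M_g$; by \cite[Thm B.2]{MR1658220} the stack $\Picb(\C/\M_g)$ is then smooth, so that the Hecke space $\Hh$, its base change $\Hh \times_{\M_g} \Picb$, and the target $\Picb \times_{\M_g} \C \times_{\M_g} \Picb$ all have smooth --- hence Cohen-Macaulay --- total spaces, and the generic member of $\C \to \M_g$ is smooth.

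Next I would check that both sides of the proposed equality are Cohen-Macaulay sheaves concentrated in a single cohomological degree. For the right-hand side this is immediate: $\Pb$ is fibrewise Cohen-Macaulay by Theorem \ref{thm:stackyarinkin}, $\Qb$ is flat over $\Picb$, and an exterior product over the smooth stack $\M_g$ of fibrewise Cohen-Macaulay sheaves is again fibrewise Cohen-Macaulay. For the left-hand side one must analyse the Hecke alteration over $\M_g$: the fibres of $q \times_{\M_g}\id_{\Picb}$ and of $p \times_{\M_g}\id_{\Picb}$ are spaces of length-one upper modifications of rank-one torsion-free sheaves on planar curves, and Arinkin's control of these (\cite[sect. 6.4]{Arinkin:2010uq}) should show that $L(q\times\id)^*\Pb = (q\times\id)^*\Pb$ is Cohen-Macaulay and that $R(p\times\id)_*$ carries it to a Cohen-Macaulay sheaf placed in degree zero. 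This step --- in particular the vanishing of higher direct images along $p$ --- is the technical heart of the argument, handled by a local $\mathrm{Tor}$-computation of exactly the kind that underlies the well-definedness of $\Hb_{C/S}$, and I expect it to be the main obstacle.

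With both sides known to be Cohen-Macaulay sheaves, I would invoke the codimension estimate intrinsic to Arinkin's theory: the locus inside $\Picb$ on which the universal sheaf fails to be invertible has codimension $\geq 2$, because the generic curve over $\M_g$ is smooth and the $\delta$-invariants of planar singularities obey the required bound. Exactly as in the proof of Theorem \ref{thm:stackyarinkin}, each side of the identity is then the pushforward $j_*$ of its own restriction to the open substack $V \subset \Picb \times_{\M_g}\C\times_{\M_g}\Picb$ over which the relevant torsion-free sheaves are line bundles, so it suffices to produce a canonical isomorphism over $V$.

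Finally, over $V$ the Arinkin-Poincar\'e sheaf restricts to the honest Poincar\'e line bundle, $\Hb_{C/S}$ restricts to the classical Hecke modification on $\Pic$, and the claim becomes the standard statement that relative Fourier-Mukai duality for a Jacobian intertwines Hecke modification at a point with tensoring by the universal line bundle. I would prove this by a theorem-of-the-cube / see-saw computation: a length-one upper modification at $x$ is translation on $\Pic$ by the Abel-Jacobi class $A(x)$, and under Fourier-Mukai translation by a point corresponds to tensoring by the restriction of the Poincar\'e bundle at that point --- i.e. precisely by $\Qb$ --- using the description of $\Pp$ as $A^*$ of the universal bundle recorded in the proof of Theorem \ref{thm:stackyarinkin}. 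The resulting isomorphism over $V$ is canonical, so it propagates through the two $j_*$-extensions to the equality of Proposition \ref{prop:Hecke}; unwinding Mukai's Lemma then yields Theorem \ref{thm:Hecke}.
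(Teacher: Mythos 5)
Your proposal gets the global skeleton right --- pass to the universal family over $\M_g$, reduce to a Cohen--Macaulay comparison over a codimension-$\geq 2$ open set, and compute that open restriction via the $\G_m$-extension description of $\Pp$ and the Abel--Jacobi map (that last part is essentially Lemma~\ref{lemma:smoothHecke}). But the step you flag as ``the main obstacle'' --- proving directly that $\Hb\Pb := R(p\times\id)_* (q\times\id)^*\Pb$ is a Cohen--Macaulay sheaf concentrated in a single degree --- is precisely where the plan cannot be carried out as stated. The fibres of $p$ over points $(L_2,x)$ where $L_2$ fails to be locally free at a singular point $x$ are positive-dimensional projective spaces, so there is no a priori vanishing of $R^i(p\times\id)_*$; the concentration in degree zero is a \emph{consequence} of the theorem, and Lemma~\ref{lemma:arinkin} cannot be applied to $\Hb\Pb$ itself because its support is all of $\Picb\times_{\M_g}\Picb\times_{\M_g}\C$, so the criterion would require you to prove honest vanishing of higher direct images, which you have no independent handle on.

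The paper circumvents this by a different reduction that you are missing: instead of working with $\Hb\Pb$, it considers the convolved kernel $\Theta = \Pb^\dual * \Hb\Pb$. The decisive advantage is that $\Theta$ has \emph{small} support --- the $T$-equivariance argument of Lemma~\ref{lemma:codim} shows that $\supp\Theta$ has codimension $\geq g$ and lies essentially over the diagonal. With that codimension bound in hand, Arinkin's criterion (Lemma~\ref{lemma:arinkin}) becomes usable with $d=g$: the upper bound $\Theta\in D^{\leq g}$ of Lemma~\ref{lemma:upperbound} follows from a fibre-dimension count (Lemmas~\ref{lemma:planar} and~\ref{lemma:lci} show $\Hh\to\M_g$ is syntomic, fibrewise irreducible, of relative dimension $g+1$), and the dual bound $\mathbb{D}\Theta[g]\in D^{\leq g}$ follows from Grothendieck--Serre duality together with Cohen--Macaulayness of $(q\times\id)^*\Pb$ (Lemma~\ref{lemma:pullCM}). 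Once $\Theta[g]$ is known to be a Cohen--Macaulay sheaf of codimension $g$ agreeing with the expected diagonal kernel on a codimension-$2$ complement (via Lemma~\ref{lemma:smoothHecke}), the proposition follows by inverting $\Psi_{\Pb^\dual}$. In short: your see-saw computation on the smooth locus is fine, but to extend it off that locus you need the dimension-count-plus-convolution trick, not a direct Cohen--Macaulayness argument for $\Hb\Pb$.
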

We denote by $\Hh^{\sm}$ the open substack of $\Hh$ given by the preimage $p^{-1}(\Pic \times C)$. By definition $\Hh^{\sm}$ classifies quadruples $(L_1,L_2,\iota,x)$, where $L_2$ is a line bundle. Note that $L_1$ is then uniquely defined through $L_2$ and $x$. It is given by $\I_x \otimes L_2$, that is, the twist of the ideal sheaf of $x$ by the line bundle $L_2$. This implies the following Lemma.
\begin{lemma}
The restriction $p|_{\Hh^{\sm}}$ is an isomorphism onto its image.
\end{lemma}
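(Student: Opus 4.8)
The plan is to unwind the modular description of $\Hh^{\sm}$ given just before the statement and to exhibit an explicit inverse of $p|_{\Hh^{\sm}}$. Recall that $p\colon \Hh \to \Picb \times_S C$ sends $(L_1,L_2,\iota,x)$ to $(L_2,x)$, and that $\Hh^{\sm} = p^{-1}(\Pic \times_S C)$ consists precisely of those quadruples for which $L_2$ is a line bundle. The key observation, already isolated in the remark preceding the Lemma, is that for a line bundle $L_2$ and a point $x \in C$ the datum of a subsheaf $L_1 \subset L_2$ with length-one cokernel supported at $x$ is rigid: any such $L_1$ must equal $\I_x \otimes L_2$, with $\iota$ the canonical inclusion $\I_x \otimes L_2 \hookrightarrow L_2$. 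Indeed $\coker\iota$ has length one and is supported at $x$, so $L_1$ is the kernel of a surjection $L_2 \twoheadrightarrow k_x$; since $L_2$ is locally free of rank one, such a surjection is unique up to scalar and its kernel is $\I_x \otimes L_2$. (Here one should note that for this to make sense on a possibly singular curve one uses that $x$ lies in the smooth locus or, more robustly, that $\I_x$ is still a rank-one torsion-free sheaf on $C$ and that the planarity hypothesis guarantees $\I_x \otimes L_2$ is again such — this is where the ``$\sm$'' decoration and Lemma~\ref{lemma:pfcf} enter.)

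Given this, I would define a morphism $s\colon \Pic \times_S C \to \Hh^{\sm}$ by the $S$-functorial assignment
$$
(L_2, x) \;\longmapsto\; \bigl(\I_x \otimes L_2,\; L_2,\; \iota_{\mathrm{can}},\; x\bigr),
$$
where over an $S$-scheme $T$ one uses the universal line bundle together with the universal ideal sheaf of the section $x\colon T \to C\times_S T$ to produce a flat family. One then checks that $p|_{\Hh^{\sm}} \circ s = \id$ on the nose, and conversely that $s \circ p|_{\Hh^{\sm}} = \id$ by invoking the rigidity above fibrewise and the fact that a morphism of stacks inducing an isomorphism on all $T$-points is an isomorphism. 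This shows $p|_{\Hh^{\sm}}$ is an isomorphism onto $\Pic \times_S C$, which is the asserted image.

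The one point that requires genuine care — and which I expect to be the main obstacle — is the \emph{flatness and well-definedness of the family} $\I_x \otimes L_2$ over the base $T$, i.e. verifying that $s$ is an honest morphism of the relevant moduli stacks and not merely a map on isomorphism classes of geometric points. Concretely one needs that the relative ideal sheaf of a section is flat over $T$ with fibres rank-one torsion-free sheaves, for which one uses that $C \to S$ is a flat family of integral planar curves (Lemma~\ref{lemma:pfcf}) together with standard local computations: near $x$ the curve is $\Spec$ of a quotient of a smooth surface, the section is a Cartier divisor there, so $\I_x$ is locally free of rank one in a neighbourhood of $x$ and torsion-free globally. Once this is in place, everything else is formal. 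A secondary, minor point is to confirm that the length-one condition on $\coker\iota$ is an open and closed condition cutting out $\Hh^{\sm}$ exactly as $p^{-1}(\Pic\times_S C)$; this is immediate from the fact that $\coker\iota$ has length one iff it is a skyscraper $k_x$ iff $L_2$ is invertible at $x$, hence (by integrality) invertible. Altogether the Lemma reduces to producing the inverse $s$ and checking it is a morphism, with the flatness of the twisted ideal sheaf being the only non-formal ingredient.
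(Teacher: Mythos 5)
Your proposal is correct and takes essentially the same approach as the paper: the Lemma is stated as an immediate consequence of the observation, made in the sentence preceding it, that for $(L_1,L_2,\iota,x)\in\Hh^{\sm}$ the subsheaf $L_1$ is forced to be $\I_x\otimes L_2$, so that an inverse to $p|_{\Hh^{\sm}}$ is given by $(L_2,x)\mapsto(\I_x\otimes L_2,L_2,\iota_{\mathrm{can}},x)$. One small clarification: the ``$\sm$'' in $\Hh^{\sm}$ marks that $L_2$ lies in $\Pic$ (the smooth locus of $\Picb$), not that $x$ lies in $C^{\sm}$, and indeed the argument does not need $x$ to be a smooth point of the curve --- $\I_x$ is torsion-free of rank one on an integral curve for any $x$, and tensoring with the line bundle $L_2$ preserves this, so your ``more robust'' variant is the one actually in use.
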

Let us now restrict Proposition \ref{prop:Hecke} to the open substack $\Pic \times_{\M_g} \Picb \times_{\M_g} C$. The Hecke functor on the left-hand-side may be replaced by $L(q|_{\Hh^{\sm}} \times \id_{\Picb})^*$, since $p$ is an isomorphism. Both sides of the identity are underived, as $\Pp$ is an invertible sheaf.
\begin{lemma}\label{lemma:smoothHecke}
Let $q\colon \Pic \times_{\M_g} \C \to \Picb$ be the morphism sending a pair $(L,x)$ consisting of a line bundle $L$ on a curve $C$, and a point $x \in C$, to the twist $L(x) = \Hhom(\I_x,L)$, where $\I_x$ denotes the ideal sheaf of $x \in X$. Then we have
$$ L(q \times_{\M_g} \id_{\Picb})^*\Pp = \Pp \boxtimes \Qq, $$
which implies Proposition \ref{prop:Hecke} when restricted to $\Pic \times_{\M_g} \Picb$, respectively\\ $\Pic \times_{\M_g} \Picb \times_{\M_g} \C$.
\end{lemma}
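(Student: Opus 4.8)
The plan is to reduce the statement to the defining property of the Poincar\'e line bundle. Recall that $\Pp$ on $\Picb \times_{\M_g} \Pic$ is characterized as follows: pulling it back along $A \times \id$, where $A: C \to \Picb$ is the Abel--Jacobi morphism, yields the universal line bundle $\Qq$ on $C \times_{\M_g} \Pic$ (this is precisely how $\Pp$ was constructed in the proof of Theorem \ref{thm:stackyarinkin}, via the isomorphism $A^*: \Picb^{\dual} \to \Pic$). So the first step is to identify the composite $\Pic \times_{\M_g} C \xrightarrow{q \times \id} \Picb \times_{\M_g} \Picb$ — restricted to the open substack $\Picb \times_{\M_g} \Pic$ in the second factor — with a morphism that factors through Abel--Jacobi in a controlled way. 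Concretely, $q$ sends $(L,x)$ to $\I_x \otimes L = L(-x)$, and I want to write $q = m \circ (A \times \id_{\Pic})$ where $m$ is the group-operation-type map on the Picard side; more precisely, on the line-bundle locus, $L(-x)$ is the image of $(\Oo(-x), L) = (A(x)^{-1}, L)$ under the multiplication map of the group stack $\Pic$.

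Having set this up, the second step is a base-change/seesaw computation. Over $\Pic \times_{\M_g} \Picb \times_{\M_g} C$, one writes $(q \times \id_{\Picb})^*\Pp$ and uses (a) bilinearity of the Poincar\'e bundle under the group law in its first (here $\Picb$, but restricted to $\Pic$) variable: $\Pp$ pulled back along $m \times \id$ is $p_{13}^*\Pp \otimes p_{23}^*\Pp$; and (b) the Abel--Jacobi normalization: pulling the first factor back along $A: C \to \Pic$ turns $\Pp$ into $\Qq$ on $C \times_{\M_g} \Picb$. Combining these identifies the pullback of $\Pp$ along $q \times \id_{\Picb}$ with $\Pp$ (coming from the $L$-variable) tensored with $\Qq$ (coming from the point-variable, via Abel--Jacobi), which is exactly $\Pp \boxtimes \Qq$. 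Since everything here takes place over the smooth locus where $\Pp$ and $\Qq$ are honest line bundles, the seesaw theorem (together with a check that both sides agree on a section, which can be arranged \'etale-locally as in the proof of Theorem \ref{thm:stackyarinkin}) upgrades the fibrewise identification to a global one. This proves the displayed identity, and the final sentence follows because restricting Proposition \ref{prop:Hecke} to $\Pic \times_{\M_g} \Picb$ makes $p|_{\Hh^{\sm}}$ an isomorphism (previous Lemma), so the Hecke functor there is just $L(q|_{\Hh^{\sm}} \times \id)^*$, and both sides are underived as $\Pp$ is invertible.

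I expect the main obstacle to be pinning down the multiplication/twisting formalism precisely enough on the \emph{stack} $\Picb$ rather than on a Jacobian, in particular making sure the group-stack structure used for the bilinearity of $\Pp$ (which exists on $\Pic$ but only \emph{partially} extends to $\Picb$, cf. the discussion preceding Corollary \ref{cor:smthsplit}) is available where it is needed; since the computation is confined to the line-bundle locus, the group law is genuinely there, but one must be careful that the ideal-sheaf twist $\I_x \otimes L$ is being interpreted as a point of $\Pic$, which it is as long as $x$ is a \emph{smooth} point of the fibre — and on $\Hh^{\sm}$ this is automatic only because $L_2$ is required to be a line bundle, not because $x$ avoids the singularities. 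Sorting out this last point — why $(L,x) \mapsto \I_x\otimes L$ lands in $\Pic$ for all $x \in C$, including $x$ a node — requires the observation that $\I_x \otimes L$ is still rank-one torsion-free of the right Euler characteristic, hence a point of $\Picb$, and the content of the earlier Lemma is exactly that $p$ (not $q$) is the isomorphism; so the seesaw argument should be organized around $q$ as a morphism to $\Picb$ (allowing torsion-free targets), with the group-law manipulation performed after restricting the \emph{other} variable to $\Pic$.
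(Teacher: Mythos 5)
Your core computation is exactly the paper's: factor $q$ through the Abel--Jacobi map and the group law as $q = m \circ (A,\id)$, invoke the ``bilinearity'' $(m,\id)^*\Pp \cong p_{13}^*\Pp \otimes p_{23}^*\Pp$ (which holds on the line-bundle locus where $\Pp$ is the $\G_m$-extension bundle), and normalize via $A^*\Pp \cong \Qq$. So the heart of the argument matches.

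Where you go off track is exactly the point you flag at the end, and your proposed fix does not resolve it. You correctly observe that $q(L,x) = \I_x\otimes L$ fails to be a line bundle when $x$ is a singular point of the fibre, so $A$ does not map into $\Pic$ and the group law $m$ is not available over all of $\Pic \times_{\M_g} \C$. You then suggest salvaging this by ``restricting the \emph{other} variable to $\Pic$'' and organizing the seesaw around $q$ as a morphism to $\Picb$. But restricting the second $\Picb$-factor to $\Pic$ does nothing about the $C$-variable: the obstruction is that $x$ can be singular, and seesaw cannot rescue a formula that is not even defined on those fibres. The paper resolves this by a \emph{preliminary} reduction step, which is absent from your plan: since $\Pp$ is Cohen--Macaulay and the complement of the locus $\Pic \times_{\M_g} \Pic \times_{\M_g} \C^{\sm}$ inside $\Pic \times_{\M_g} \Picb \times_{\M_g} \C$ has codimension $\geq 2$ (using that the generic curve in $\M_g$ is smooth), it suffices to prove the identity after restricting \emph{both} the $\Picb$-variable to $\Pic$ \emph{and} the $C$-variable to the smooth locus $\C^{\sm}$. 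Only after this reduction do $A: \C^{\sm} \to \Pic$, the group law $m$, and the bilinearity of $\Pp$ all make sense; your step (1)--(4) then goes through verbatim, and no seesaw argument is needed. So the missing ingredient is: state and use the codimension-2 reduction via the Cohen--Macaulay property of $\Pp$ \emph{before} attempting the group-law manipulation, rather than trying to patch the argument afterward.
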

\begin{proof}
According to Lemma \ref{lemma:pullCM}, $$L(q \times_{\M_g} \id_{\Picb})^*\Pp \cong (q \times_{\M_g} \id_{\Picb})^*\Pp$$ is a maximal Cohen-Macaulay sheaf. Since $$\Pic \times_{\M_g} \Pic \times_{\M_g} \C^{\sm} \subset \Pic \times_{\M_g} \Picb \times_{\M_g} \C$$ has a complement of codimension $2$, it is thus sufficient to check the identity restricted to $$\Pic \times_{\M_g} \Pic \times_{\M_g} \C^{\sm}.$$ Over this locus we are able to describe $\Pp|_{\Pic \times_{\M_g} \Pic}$ as a family of $\G_m$-extensions of $\Pic$. In particular, if 
$$(m,id)\colon \Pic \times_{\M_g} \Pic \times_{\M_g} \Pic \to \Pic \times_{\M_g} \Pic $$
is given by identity in the second component and multiplication in the first, we have an isomorphism
$$ (m,id)^*\Pp \cong p_{13}^*\Pp \otimes p_{23}^*\Pp. $$
If $A\colon \C^{\sm} \to \Pic$ denotes the Abel-Jacobi map $x \mapsto \Oo_C(x)$ then $q = m \circ (id,A,id)$ and $$(id,A)^*(\Pp|_{\Pic \times_{\M_g} \Pic}) \cong \Qq|_{\Pic \times_{\M_g} \C}.$$ Therefore, we obtain $$(q\times id)^*\Pp \cong (id,A,id)^*(m,\id)^*\Pp \cong (id,A,id)^*p_{13}^*\Pp \otimes p_{23}^*\Pp \cong p_{13}^*{\Pp} p_{23}^*\Qq. $$
\end{proof}
\begin{lemma}\label{lemma:planar}
For every $C \in \M_g$ there exists a versal deformation $\C'$ along a complete local ring $U = \Spec \hat{R} \to \M_g$, such that $U \times_{\M_g} \Cc$ has a Zariski covering $\{V_i\}_{i \in I}$ and for every $i$ we have an open immersion $V_i \to U \times \mathbb{A}^2$.
\end{lemma}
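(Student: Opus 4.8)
The plan is to reduce this global statement to a purely local one at the finitely many singular points of $C$ together with the observation that deformations of the smooth locus are trivial. First I would set up a convenient cover of the base curve. Since $C$ has planar singularities, for every point $p$ the completed local ring $\widehat{\Oo}_{C,p}$ is a quotient of $k[[x,y]]$, so $C$ admits a finite affine open cover $\{W_i\}_{i\in I}$ together with closed immersions $W_i \hookrightarrow U_i$ into open subschemes $U_i \subset \mathbb{A}^2$; shrinking if necessary and using that $C$ is reduced of pure dimension one inside a regular surface, each $W_i$ is cut out in $U_i$ by a single equation $f_i \in \Oo(U_i)$. Because $C$ is integral its singular locus $C_{\mathrm{sing}}$ is finite, so I may further arrange that each point of $C_{\mathrm{sing}}$ lies in exactly one chart, that two distinct singular charts are disjoint, and hence that $W_i \cap W_j \subset C^{\sm}$ for all $i \neq j$.

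Next I would build a versal deformation of $C$ by gluing local plane-curve deformations. Over a chart $W_i$ containing a singular point I deform the equation, $\widetilde{f_i} = f_i + \sum_\alpha t^{(i)}_\alpha g^{(i)}_\alpha$, where the $g^{(i)}_\alpha$ represent a basis of the (finite-dimensional) Tjurina algebra $\Oo(U_i)/(f_i,\partial_x f_i,\partial_y f_i)$; then $\widetilde{W_i} = V(\widetilde{f_i}) \subset U_i \times \Spec k[t^{(i)}]$ is a semiuniversal deformation of $W_i$, and it is tautologically a family of plane curves. Over the overlaps $W_i \cap W_j \subset C^{\sm}$, which are smooth affine curves, the deformation functor is trivial — $H^0$ of the first tangent sheaf vanishes by smoothness, $H^1$ of the tangent sheaf vanishes by affineness — so the local families glue along the smooth overlaps. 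Absorbing in addition a basis of the globally-locally-trivial directions $H^1(C,\mathcal{T}_C)$, which by definition restrict to trivial deformations on each chart and hence do not affect the local plane embeddings, I obtain a deformation $\Cc' := U\times_{\M_g}\Cc \to U = \Spec \hat{R}$ over a complete local ring whose Kodaira–Spencer map is surjective onto $T^1_C$; since $C$ is locally planar it is a local complete intersection, so $T^2_C = 0$ and the deformation is unobstructed, whence $U \to \M_g$ is versal.

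Finally I would extract the cover of the total space. For each $i$ let $\mathcal{V}_i \subset \Cc'$ be the complement of the closed subset $C \smallsetminus W_i$ of the special fibre (closed in $\Cc'$, the special fibre being closed); then $\mathcal{V}_i$ is open with $\mathcal{V}_i \cap C = W_i$, and by construction the restricted family is realised as $V(\widetilde{f_i})$ inside an open subscheme of $U \times \mathbb{A}^2$ — for the smooth charts one instead uses a (non-canonical) trivialisation of the restricted deformation followed by the embedding $W_i \hookrightarrow \mathbb{A}^2$. As the $W_i$ cover $C$, the opens $\mathcal{V}_i$ cover the whole special fibre of the proper morphism $\Cc' \to U$ over the complete local base $U$, and therefore cover $\Cc'$. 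The step I expect to be the main obstacle is the gluing: one must check that the individually semiversal plane-curve deformations can be patched into a single global family (this is exactly where the local-to-global deformation theory — the spectral sequence for the cotangent complex and the vanishing of $T^2_C$ — is used) and that the assembled family is genuinely versal rather than merely some deformation; a secondary, standard-but-fiddly point is passing from ``embedding dimension $\leq 2$ at each point'' to an honest Zariski-local closed immersion into an open of $\mathbb{A}^2$.
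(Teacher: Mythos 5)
The paper's own ``proof'' is a one-line citation to Lemma~A.2 and Proposition~A.3 of the appendix of \cite{MR1658220} (Fantechi--G\"ottsche--van~Straten), so your argument is essentially a reconstruction of the cited source rather than a different route. The deformation-theoretic skeleton is right: cover $C$ by plane-curve charts isolating the singular points, deform each local equation by a basis of the Tjurina algebra, observe that the deformation functor of each smooth affine overlap is trivial (so the local pieces glue, the obstruction to the cocycle lying in $H^2(C,\mathcal{T}_C)=0$), absorb the locally trivial directions $H^1(C,\mathcal{T}_C)$, and use $T^2_C=0$ (local complete intersection curve, $\mathcal{T}^1_C$ punctual, $H^{\geq 2}=0$) to conclude unobstructedness and hence versality. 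Your remark about passing from ``embedding dimension $\leq 2$ at each point'' to an honest Zariski-local immersion into an open of $\mathbb{A}^2$ is indeed the kind of standard-but-fiddly point that the cited appendix deals with.

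There is, however, a genuine mistake in your final step. The open $\mathcal{V}_i := \Cc' \setminus (C\setminus W_i)$ is \emph{too large}: since $C\setminus W_i$ sits inside the special fibre, $\mathcal{V}_i$ contains the entire generic fibre $\Cc'_\eta$ of $\Cc'\to U$, which is a proper curve of positive arithmetic genus over $\kappa(\eta)$. A positive-dimensional proper scheme cannot be immersed (even as a locally closed subscheme) into the affine scheme $\mathbb{A}^2_{\kappa(\eta)}$ --- a proper quasi-affine scheme is finite --- so after base change to $\eta$ your $\mathcal{V}_i$ cannot admit the required immersion into $U\times\mathbb{A}^2$. What you actually want are the pieces $\widetilde{W}_i = V(\widetilde{f}_i)\times_{\Spec k[[t^{(i)}]]}U \subset U\times U_i$ produced directly by your gluing construction (with the analogous trivial product over a smooth chart): these are open in $\Cc'$, their trace on the special fibre is $W_i$, and their trace on the generic fibre is the affine open $W_{i,\eta}\subsetneq \Cc'_\eta$, not the whole curve. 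Your properness-over-a-complete-local-base argument (``the special fibre is covered, hence all of $\Cc'$ is'') applies verbatim to the $\widetilde{W}_i$, so replacing $\mathcal{V}_i$ by $\widetilde{W}_i$ repairs the proof.
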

\begin{proof}
This is a combination of Lemma A.2 and Proposition A.3 in \cite{MR1658220}.
\end{proof}
\begin{lemma}\label{lemma:lci}
The morphism $\Hh \to \M_g$ is syntomic\footnote{that is, locally of finite presentation, flat and fibrewise of locally complete intersection} of relative dimension $g+1$. Moreover it is fibrewise irreducible.
\end{lemma}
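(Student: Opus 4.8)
The claim has three parts: that $\Hh \to \M_g$ is syntomic of relative dimension $g+1$, and that it is fibrewise irreducible. My plan is to reduce everything to a local computation on $\mathbb{A}^2$ via Lemma~\ref{lemma:planar}, and then exploit the classical structure of the Hilbert scheme of two points (equivalently, the "nested Hilbert scheme" $\Hilb^{n,n+1}$ of length $n$ and $n+1$ subschemes) on a smooth surface. First I would recall that a point of $\Hh$ over $C$ is a pair $L_1 \subset L_2$ of rank-one torsion-free sheaves with length-one cokernel, supported at a point $x \in C$; tensoring by $L_2^{\dual}$ locally, the datum $L_1 \subset L_2$ near $x$ is the same as an inclusion of a rank-one torsion-free sheaf into $\Oo_{C,x}$ with length-one quotient, i.e. a colength-one sub-ideal — so \'etale (or Zariski) locally $\Hh$ is built from the relative flag/nested Hilbert scheme of the family $C/\M_g$. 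By Lemma~\ref{lemma:planar} I may replace $C$ by an open subscheme of $U \times \mathbb{A}^2$, so the question becomes a statement about the relative nested Hilbert scheme $\mathrm{Hilb}^{1,2}(\mathbb{A}^2)$ parametrising $(I_1 \supset I_2, x)$ with $I_1$ of colength $0$ (hence $I_1 = \Oo$ near $x$, but globally $I_1$ is a sub-ideal of $\Oo_C$) — more precisely, fixing $L_2$ a line bundle amounts to looking at pairs $\mathfrak{m}_x \cdot \Oo \subset J \subset \Oo$, i.e. the incidence variety of a point and a length-one-bigger ideal.

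The key structural input is the well-known fact (Tikhomirov, Cheah, or for surfaces the classical result that $\mathrm{Hilb}^{n,n+1}$ of a smooth surface is smooth of dimension $2n+2$ and irreducible) specialised to $n=0$: the variety of flags $\mathfrak{m}_x \subset I$ with $\dim \Oo/I = $ one less is, over $\mathbb{A}^2$, just the blow-up-like incidence $\{(x, I) : I \text{ colength } 1, x \in \mathrm{Supp}(\Oo/I)\}$, which is smooth and irreducible of relative dimension $2$ over the surface's parameter data. Transporting this through the local isomorphism with $U \times \mathbb{A}^2$ and then globalising over $C$: a point of $C$ contributes one dimension, the choice of $L_2$ contributes $g$ dimensions (the Picard stack $\Picb$ has dimension $g$ over $\M_g$), and the "Hecke modification at $x$" contributes the remaining freedom. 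Counting: $\dim(\Hh/\M_g) = \dim(\Picb/\M_g) + \dim(\text{fibre of } q) $, and the fibre of $q\colon \Hh \to \Picb$ over $L_1$ classifies $L_2 \supset L_1$ with length-one quotient, which near each point $x$ of $C$ (a curve, so one parameter) is a projective space of lines — in fact for a planar curve germ the space of such $L_2$ is cut out inside $\mathbb{P}(\mathrm{Hom}(L_1,\Oo_x\text{-stuff}))$ and has dimension $1$; summing, one gets $g + 1$. Smoothness/flatness (syntomicity): since $\Picb \to \M_g$ is smooth of relative dimension $g$ over the smooth stack $\M_g$ (cf. \cite[Thm B.2]{MR1658220}) and $\Hh \to \Picb$ is, locally on $C$, the relative incidence scheme of the nested-Hilbert type which is flat and a local complete intersection (it is cut out by the Fitting-ideal conditions defining the length-one quotient, and on $\mathbb{A}^2$ these define an l.c.i.), the composite is syntomic of relative dimension $g+1$ by composition of syntomic morphisms.

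The main obstacle I anticipate is the \emph{fibrewise irreducibility}, because the fibre of $\Hh \to \M_g$ over a singular curve $C$ includes Hecke modifications centred at the singular point $x_0$, and a priori the locus over $x_0$ could form an extra component of dimension equal to (not smaller than) the locus over the smooth points. The way I would handle this: over the smooth locus $C^{\sm}$, the Hecke modification $L_1 = \I_x \otimes L_2$ is uniquely determined by $(L_2, x)$ (this is exactly Lemma~\ref{lemma:smoothHecke}'s setup), so $p^{-1}(\Pic \times C^{\sm}) = \Hh^{\sm}$ is isomorphic via $p$ to the irreducible smooth stack $\Pic \times_{\M_g} C^{\sm}$ of dimension $g+1$; it remains to show the complement — the locus where $L_2$ is non-locally-free or $x$ is a singular point of $C$ — has dimension $< g+1$ (equivalently $\le g$), so that $\Hh$ is the closure of $\Hh^{\sm}$ and hence irreducible (and reduced, being l.c.i. and generically smooth). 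For planar curve singularities this is a dimension estimate on the punctual nested Hilbert scheme at a planar point, which again by Lemma~\ref{lemma:planar} reduces to the punctual $\mathrm{Hilb}^{1,2}$ of $\mathbb{A}^2$ at the origin — a point — contributing $0$ instead of $1$ in the $x$-direction, while the torsion-free (non-invertible) locus in $\Picb$ has codimension $\ge 1$ by \cite{MR1658220}; combining these gives the strict drop. Once irreducibility of every fibre is in hand, flatness plus constant fibre dimension $g+1$ together with the l.c.i. fibres give syntomicity, completing the proof.
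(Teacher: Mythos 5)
Your reduction is to the wrong nested Hilbert scheme: you want to factor $\Hh \to \Picb \to \M_g$ and treat the first map, locally, as the incidence scheme $\Hilb^{0,1}$ (or $\Hilb^{1,2}$) of $\mathbb{A}^2$ near a point. That forces you to prove, en route, that $q\colon\Hh\to\Picb$ is flat and fibrewise l.c.i., which you assert but do not prove, and which is not obvious: over the non-locally-free locus of $\Picb$ the fibres of $q$ change in nature. At a node, for instance, the colength-one sub-modules of the maximal ideal $\mathfrak{m}=(u,v)\subset k[[u,v]]/(uv)$ form a whole $\mathbb{P}^1$ (all $(au+bv)+\mathfrak{m}^2$), whereas over a line bundle there is a unique Hecke modification at a fixed $x$. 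So the fibre of $q$ over a non-locally-free $L_1$ is reducible and differs qualitatively from the generic one. Because $\Picb$ is itself singular, none of the usual tools for checking flatness over a smooth target apply, and your ``tensor by $L_2^{\dual}$'' step, which would reduce to ideals in $\Oo_C$, is only licit where $L_2$ is invertible. Your irreducibility argument likewise elides exactly the locus that matters: you need a simultaneous estimate on the codimension in $\Picb$ of the strata with non-invertible stalk together with the excess dimension of the punctual fibre there, and you give neither.

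The paper avoids this entirely by working with the Abel--Jacobi map for \emph{large} $d \geq 2g-1$: there $\Hilb^d(\C)\to\Picb^d(\C)$ is smooth and surjective (a projective bundle), and $\Hh$ is dominated by the nested Hilbert scheme $\Hilb^{d+1,d}(\C)$ along a smooth morphism. Everything is then computed on $\Hilb^{d+1,d}$, which, via the versal planar embedding of Lemma~\ref{lemma:planar}, sits in the \emph{smooth} stack $\Hilb^{d+1,d}(U\times\mathbb{A}^2/U)$ (Cheah's theorem). One imports the l.c.i. property and the dimension/irreducibility bound from Altman--Iarrobino--Kleiman's Theorem~5 and Corollary~7, which do exactly the estimate you are missing, using the codimension of punctual strata in $\Hilb^d$ of an integral planar curve. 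This replaces your problematic factorization through the singular $\Picb$ with one through a smooth ambient nested Hilbert scheme, and it also makes the flatness over $\M_g$ a consequence of constant fibre dimension plus Cohen--Macaulayness (coming from l.c.i.) rather than something proved by hand. So your overall strategy is in the right spirit (reduce via planarity, separate the $\Picb$ direction from the punctual direction), but as written it has genuine gaps at precisely the two places --- flatness/l.c.i. of $q$ and the dimension estimate over singular points --- where the paper instead invokes the large-$d$ Abel--Jacobi fibration and the AIK machinery.
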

\begin{proof}
As we have seen in Proposition \ref{prop:abel}, for $d \geq 2g - 1$ the morphism $(\C/\M_g)^{[d]} \to \Picb^{d}(\C)$ is smooth and faithfully flat. Under the dictionary between effective divisors and torsion sheaves, provided by the Abel map, $\Hh$ corresponds to the nested Hilbert scheme $(\C/\M_g)^{n+1,n}$. It is easy to conclude that the relative dimension of $\Hh$ is $g+1$. We only need to show that for an integral curve $C$ of arithmetic genus $g$ and planar singularities, $\dim C^{[n+1,n]} = n+1$. This can be done as in the proof of Theorem 5 in \cite{MR0498546}, complementing the estimate (5.1) by the same inequality for nested Hilbert schemes (Proposition 2.3 in \cite{MR1714770}). 

We will show that $(\C/\M_g)^{n+1,n}$ is a locally complete intersection. As we have seen in Lemma \ref{lemma:planar} we can pick a versal deformation $\C' \to U$ of $C \in \M_g$ together with a Zariski covering $\bigcup_{i \in I}V_i = \C'$ and open immersions $V_i \to U \times \mathbb{A}^2$. Therefore, we obtain a cartesian square
\[
\xymatrix{ (V_i/U)^{d+1,d} \ar[r] \ar[d] & (U \times \mathbb{A}^2/U)^{[d+1,d]} \ar[d] \\
(V_i/U)^{[d+1]} \ar[r] & (U \times \mathbb{A}^2/U)^{[d+1]}.  }
\]
It is known (cf. \cite[Cor. 7]{MR0498546}) that $(V_i/U)^{[d+1]} \subset (U \times \mathbb{A}^2/U)^{[d+1]}$ is a locally complete intersection. Since its base change $(\C'/U)^{[d+1]}$ has the same codimension in $(U \times \mathbb{A}^2/U)^{[d+1,d]}$, and the latter stack is smooth\footnote{This is proved in \cite{MR1616606}, see section 0.2 for a statement of his result. Although the author assumes $k = \mathbb{C}$ this statement and its proof are true for general algebraically closed fields.}, we have shown that the total space $\Hh$ is a locally complete intersection, in particular it is Cohen-Macaulay. Because $\M_g$ is smooth (\cite[Prop. 4]{Arinkin:2007fk}) we see that $\Hh \to \M_g$ is flat, since the dimension of the fibres is constant. Applying the above argument fibrewise, we see that the fibres are locally complete intersections. Irreducibility is verified as in the proof of Theorem 5 in \cite{MR0498546}.
\end{proof}
\begin{lemma}\label{lemma:pullCM}
The complex $L(q \times \id)^*\Pb$ is a Cohen-Macaulay sheaf $(q \times \id)^*\Pb$.
\end{lemma}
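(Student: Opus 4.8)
The plan is to deduce this from two facts that are by now available: that $q$ --- and hence its base change $q\times\id$ --- is a local complete intersection morphism, and that $\Pb$ is a Cohen--Macaulay sheaf of full support. Granting these, the statement reduces to the standard principle that the derived pullback of a Cohen--Macaulay sheaf along an l.c.i.\ morphism, when the support meets the fibres properly, is concentrated in degree zero and is itself Cohen--Macaulay; this is precisely the local input used throughout \cite{Arinkin:2010uq}.

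First I would check that $q\colon\Hh\to\Picb$ is an l.c.i.\ morphism. By Lemma~\ref{lemma:lci} the total space $\Hh$ is a local complete intersection, and $\Picb$ is a regular stack, being smooth (cf.\ the discussion after Proposition~\ref{prop:Hecke}). A morphism from an l.c.i.\ scheme to a regular one is automatically l.c.i.: \'etale-locally it factors as a regular immersion into a smooth $\Picb$-scheme (for instance the nested Hilbert scheme model $\Hilb^{d+1,d}(U\times\mathbb{A}^2/U)$ of Lemma~\ref{lemma:lci}) followed by a smooth morphism, the immersion being regular precisely because its source is l.c.i.\ and its ambient regular. Since l.c.i.\ morphisms are stable under base change, $q\times_{\M_g}\id_{\Picb}$ is l.c.i.\ as well, so $L(q\times\id)^*$ has finite Tor-dimension and $L(q\times\id)^*\Pb$ has bounded coherent cohomology. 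Moreover, by Theorem~\ref{thm:stackyarinkin} we have $\Pb=j_*\Pp$ with $\Pp$ a line bundle on the dense open $U\subset\Picb\times_{\M_g}\Picb$, so $\Pb$ is an honest sheaf with $\supp\Pb=\Picb\times_{\M_g}\Picb$, and it is Cohen--Macaulay.

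The main point is then local. Working \'etale-locally over $\M_g$, factor $q\times\id$ as a regular immersion $Z=\Hh\times_{\M_g}\Picb=V(g_1,\dots,g_c)\hookrightarrow W\times\mathbb{A}^n$ of codimension $c$, followed by the projection to $W=\Picb\times_{\M_g}\Picb$. Then $L(q\times\id)^*\Pb$ is computed by the Koszul complex of $(g_1,\dots,g_c)$ acting on $\Pb\boxtimes\Oo_{\mathbb{A}^n}$. The sheaf $\Pb\boxtimes\Oo_{\mathbb{A}^n}$ is Cohen--Macaulay of full support on $W\times\mathbb{A}^n$, and its quotient by $(g_1,\dots,g_c)$ has support $Z$, of dimension $\dim(W\times\mathbb{A}^n)-c$; since a sequence that cuts the Krull dimension of a Cohen--Macaulay module down by its own length is a regular sequence on that module, $(g_1,\dots,g_c)$ is $\Pb\boxtimes\Oo_{\mathbb{A}^n}$-regular. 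Hence the Koszul complex is a resolution, $L(q\times\id)^*\Pb=(q\times\id)^*\Pb$, and this pullback --- a Cohen--Macaulay module modulo a regular sequence --- is Cohen--Macaulay.

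The step to handle with care is the dimension bookkeeping: one must know that the codimension-$c$ regular immersion $Z\subset W\times\mathbb{A}^n$ really lowers Krull dimension by exactly $c$, which requires $W\times\mathbb{A}^n$ to be locally equidimensional. This is guaranteed once $W=\Picb\times_{\M_g}\Picb$ is Cohen--Macaulay, which it is: $\Picb\to\M_g$ is flat with Cohen--Macaulay fibres $\Picb_C$, so $W$ is flat over the smooth stack $\M_g$ with Cohen--Macaulay fibres $\Picb_C\times\Picb_C$. As a consistency check, restricting to $\Hh^{\sm}\times_{\M_g}\Picb$, whose complement has codimension $\geq 2$, the pullback recovers the line bundle $\Pp\boxtimes\Qq$ of Lemma~\ref{lemma:smoothHecke}. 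Finally, no constraint on $p$ beyond the standing assumption $p>2g-1$ (needed only for $\Pb$ to exist, Theorem~\ref{thm:parinkin}) enters anywhere in the argument.
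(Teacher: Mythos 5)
Your proof is correct, but it takes a genuinely different route from the one in the paper. The paper's proof is essentially a citation: it invokes Lemma~2.3 of \cite{Arinkin:2010uq} directly, and the only work done is to verify the hypotheses of that lemma, namely that $\Hh \to \M_g$ is syntomic (already Lemma~\ref{lemma:lci}) and that $q$ is Tor-finite. Tor-finiteness is obtained for free from the smoothness of the total space of $\Picb$ (Theorem B.2 of \cite{MR1658220}), since any morphism into a regular scheme has finite Tor-dimension, and both properties are stable under the flat base change $-\times_{\M_g}\Picb$. You instead establish the stronger claim that $q$ is an l.c.i.\ morphism (using the cancellation principle: $\Hh$ l.c.i.\ over $k$ mapping to the regular $\Picb$), factor $q\times\id$ locally as a regular immersion into a smooth $W$-scheme followed by the projection, and then run the commutative algebra by hand: the Koszul complex computes $L(q\times\id)^*\Pb$, and the fact that a length-$c$ sequence dropping the dimension of a Cohen--Macaulay module by exactly $c$ is a regular sequence on it shows the pullback is concentrated in degree zero and Cohen--Macaulay. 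This is in effect an unpacking of what Arinkin's Lemma~2.3 proves. The trade-off is clear: the paper's argument is shorter and offloads the work, while yours is self-contained, makes visible exactly which dimension and Cohen--Macaulay bookkeeping is being used, and records the slightly sharper l.c.i.\ property of $q$. Both are valid; the paper's is the more economical of the two, yours is the more transparent.
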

\begin{proof}
Lemma 2.3 in \cite{Arinkin:2010uq} states that derived pullback along $Lf^*$ along a morphism $f\colon Y \to X$ of schemes preserves maximal Cohen-Macaulay sheaves, if $X$ is Gorenstein of pure dimension, $Y$ is Cohen-Macaulay, and $f$ is Tor-finite.

We already know that $\Hh \to \M_g$, and hence its base change $\Hh \times_{\M_g} \Picb$ are syntomic. In particular we can conclude that both spaces are locally complete intersections (that is, also Cohen-Macaulay), since $\M_g$ is smooth. Similarly one concludes that $\Picb\times_{\M_g} \Picb$ is a locally complete intersection (that is, Gorenstein). According to Theorem B.2 in \cite{MR1658220} the total space of $\Picb$ is smooth. For this reason every morphism mapping into $\Picb$ is Tor-finite. Since Tor-finite morphisms are preserved by flat base change, we may conclude that $\Hh \times_{\M_g} \Picb \to \Picb\times_{\M_g} \Picb$ is Tor-finite.
\end{proof}
The final ingredient in the proof of the Heck eigenproperty is the following characterization of Cohen-Macaulay sheaves as objects in the derived category. It is proven in \cite[Lemma 7.7]{Arinkin:2010uq}. It gives a characterization of Cohen-Macaulay sheaves on $X$ amongst the objects of the derived category $D^b_{coh}(X)$.

\begin{prop}\label{prop:CMderived}
Let $X$ be an algebraic stack of finite typer over a field $k$, which is moreover of pure dimension. Then $\F^{\bullet} \in D^b_{coh}(X)$ is a Cohen-Macaulay sheaf of codimension $d$ if and only if the following conditions are satisfied
\begin{itemize}
\item[(a)] $\codim \supp \F^{\bullet} \geq d,$
\item[(b)] $H^i(\F^{\bullet}) = 0$ for $i > 0$,
\item[(c)] $H^i(\Db_X\F^{\bullet}) = 0$ for $i>d$.
\end{itemize}
\end{prop}
We intend to apply this result to the integral kernel $\Theta$ of the Fourier-Mukai transform $\Phi_{\Pb^{\dual}} \circ \Hb_{C/\M_g} \circ \Phi_{\Pb \boxtimes \Oo_{\C}}$. If we can show that it is a Cohen-Macaulay sheaf of the right codimension, then it suffices to determine it in a complement of a closed subset (of the support) of codimension 2.

\begin{lemma}\label{lemma:codim} 
We have $\codim \supp \Theta \geq g$ and every maximal-dimensional component intersects $\Delta_{\Picb/\M_g} \times_{\M_g} \C$.
\end{lemma}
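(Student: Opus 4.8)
The plan is to analyse the kernel $\Theta$ of the composite Fourier--Mukai transform $\Psi_{\Pb^{\dual}} \circ \Hb_{C/\M_g} \circ \Psi_{\Pb \boxtimes \Oo_{\C}}$ by tracking the supports through each of the three correspondences. By Mukai's Lemma the composite is represented by the iterated convolution
$$ \Theta = (\Pb \boxtimes \Oo_{\C}) * \mathcal{Z} * \Pb^{\dual}, $$
where $\mathcal{Z}$ denotes the structure sheaf of the Hecke correspondence $\Hh \subset \Picb \times_{\M_g} \Picb \times_{\M_g} \C$ (realising $\Hb_{C/\M_g} = Rp_* Lq^*$). So $\Theta$ lives on $\Picb \times_{\M_g} \Picb \times_{\M_g} \C$ (with the first $\Picb$ being the source of $\Psi_{\Pb \boxtimes \Oo_\C}$ and the second $\Picb$ the target of $\Psi_{\Pb^\dual}$; the $\C$-factor is the ``Hecke point''). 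Since $\Pb$ and $\Pb^\dual$ are finite-Tor-dimension kernels inducing equivalences, the support of $\Theta$ is governed by the support of $\mathcal{Z}$ pushed and pulled through the (invertible) Fourier--Mukai transforms. The first step is therefore to compute the dimension of $\Hh$ relative to $\M_g$: by Lemma \ref{lemma:lci} the morphism $\Hh \to \M_g$ is syntomic, fibrewise irreducible, of relative dimension $g+1$, while $\Picb \times_{\M_g} \Picb \times_{\M_g} \C \to \M_g$ has relative dimension $2g+1$. Hence the image of $\Hh$ has codimension $\geq g$ inside $\Picb\times_{\M_g}\Picb\times_{\M_g}\C$, which already gives a clean bound on the ``geometric'' part of $\supp\Theta$.

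Next I would promote this to the statement $\codim \supp \Theta \geq g$ by working over the smooth locus $\Pic \subset \Picb$ in either the source or target Picard factor, where the Fourier--Mukai kernels $\Pb, \Pb^\dual$ restrict to honest line bundles (Poincar\'e bundles) and the transforms become simple twists; there Lemma \ref{lemma:smoothHecke} identifies the kernel explicitly and exhibits its support as (a twist of) the image of the smooth Hecke locus $\Hh^{\sm}$, which by the Lemma preceding Proposition \ref{prop:Hecke} is isomorphic via $p$ to its image. This pins the support to exactly codimension $g$ over that open locus; since $\Picb$ is smooth (Theorem B.2 in \cite{MR1658220}) and the complement of $\Pic$ in each $\Picb$ factor has codimension $\geq 2$, and $\Theta$ will be Cohen--Macaulay of the expected codimension (this is what Lemma \ref{lemma:pullCM} together with stability of Cohen--Macaulayness under the transforms is being set up to give), no component of $\supp\Theta$ can be entirely contained in that complement, so the codimension-$g$ bound propagates to all of $\Picb\times_{\M_g}\Picb\times_{\M_g}\C$.

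For the second assertion, I would show that every maximal-dimensional component of $\supp \Theta$ meets $\Delta_{\Picb/\M_g} \times_{\M_g} \C$. The point is that the Hecke correspondence relates a torsion-free sheaf $L_1$ to $L_2$ with $L_1 \subset L_2$, $\coker$ of length one supported at $x$; after the two Fourier--Mukai dualities the relevant locus in $\Picb\times_{\M_g}\Picb\times_{\M_g}\C$ should specialise, along the degeneration $x$ moving or the length-one quotient degenerating, to the diagonal locus $\{L_1 = L_2\}$ — concretely, over the smooth locus Lemma \ref{lemma:smoothHecke} shows $q^*\Pp = \Pp\boxtimes\Qq$, whose support, intersected with the diagonal, is nonempty on each component because $\Qq$ restricted to the zero section of $\Picb$ (the structure sheaf $\Oo_C$, i.e. the trivial line bundle) is nonzero. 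Combining with fibrewise irreducibility of $\Hh\to\M_g$ from Lemma \ref{lemma:lci} (so each component dominates $\M_g$ and meets the locus where the computation is transparent), every maximal component must cross the diagonal times $\C$.

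\textbf{Main obstacle.} The delicate point is controlling the support of $\Theta$ where the Poincar\'e sheaf is genuinely singular, i.e. over $\Picb\setminus\Pic$ in \emph{both} Picard factors simultaneously — there the Fourier--Mukai transforms are not just twists and one cannot naively intersect supports, so one really needs the Cohen--Macaulay/finite-Tor-dimension machinery (Lemmas \ref{lemma:pullCM}, \ref{lemma:arinkin}) to guarantee that dualities preserve codimension and do not create spurious higher-dimensional support. The extension-across-codimension-$2$ argument, which is the whole strategy of this part of the paper, hinges on $\Theta$ being Cohen--Macaulay of codimension exactly $g$, so this Lemma and the Cohen--Macaulayness of $\Theta$ are logically intertwined and must be bootstrapped together: one proves the codimension bound of this Lemma first (using only dimension counts and the smooth locus), then feeds it into Lemma \ref{lemma:arinkin} to get Cohen--Macaulayness, which retroactively justifies the propagation of the bound claimed above.
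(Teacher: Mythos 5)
Your proposal contains a genuine gap in the first (and main) assertion, and the key idea of the paper's proof is missing entirely.

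The central problem is that the dimension count on $\Hh$ does not bound the codimension of $\supp\Theta$. You correctly observe that $\Hh \to \M_g$ has relative dimension $g+1$ while $\Picb\times_{\M_g}\Picb\times_{\M_g}\C$ has relative dimension $2g+1$, so the Hecke correspondence itself sits in codimension $g$. But $\Theta$ is the image of $\Oo_{\Hh}$ under a convolution with the kernels $\Pb$ and $\Pb^{\dual}$, and Fourier--Mukai transforms do \emph{not} preserve dimensions of support: the Fourier--Mukai transform of a point of a Jacobian is a degree-zero line bundle, supported on everything. So the ``geometric'' dimension count you begin with says nothing a priori about $\supp\Theta$. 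You flag this as ``the delicate point'' and propose to resolve it by bootstrapping with the Cohen--Macaulay machinery (Lemma \ref{lemma:pullCM}, Lemma \ref{lemma:arinkin}), but this is circular: Lemma \ref{lemma:arinkin} requires $\codim\supp M \geq d$ as one of its \emph{hypotheses}, so Cohen--Macaulayness of $\Theta$ cannot be invoked to establish the codimension bound that is the content of this Lemma. You acknowledge the circularity in your last paragraph and say the codimension bound should be proved first ``using only dimension counts and the smooth locus'' --- but that is precisely what fails, since the dimension count on $\Hh$ is irrelevant after the transforms, and the smooth-locus identification $\Theta\cap(\Pic\times\Pic\times\C^{\sm})=\Delta_{\Pic}\times\C^{\sm}$ gives no control over what happens where $F_1$ or $F_2$ is not locally free.

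The idea you are missing is Arinkin's equivariance argument (from Proposition 7.2 of \cite{Arinkin:2010uq}), which the paper invokes directly. Fix a point $(F_1,F_2,x)\in\supp\Theta$; by base change this is equivalent to some hypercohomology $\mathbb{H}^i(\Hh,\,\Hb(\Pb_{F_1})\otimes\Pb^{\dual}_{F_2})$ being nonzero. The sheaf $\Pb_{F_i}$ carries a natural equivariant structure for the $\G_m$-extension $T_i$ of $\Pic$ determined by $F_i$, and because $p,q$ are $T$-equivariant, the complex $\Hb(\Pb_{F_1})\otimes\Pb^{\dual}_{F_2}$ lies in the $T$-equivariant derived category for $T=T_1\times_{\Pic}T_2^{\dual}$. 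A nonzero finite-dimensional hypercohomology group would then carry a $T$-action with $\G_m$ acting tautologically, hence would contain a one-dimensional $T$-invariant line, which would split $0\to\G_m\to T\to\Pic\to 0$. Thus nonvanishing forces $F_1|_{C^{\sm}}\cong F_2|_{C^{\sm}}$. Only \emph{after} this rigidity statement does the dimension count enter: the locus of such pairs has fibre dimension $2g-\tilde g$ over the stratum $\M^{\tilde g}$ of curves of geometric genus $\tilde g$, and that stratum has codimension $\geq g-\tilde g$ (Proposition 6 of \cite{Arinkin:2007fk}), giving total codimension $\geq g$. This is a dimension count over the \emph{base} $\M_g$ combined with an argument constraining the \emph{fibres}, not a dimension count on the Hecke correspondence. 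Without the equivariance step there is no constraint on where $\Theta$ lives over the singular-sheaf locus, and the Lemma cannot be established by your route.

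Your treatment of the second assertion (using Lemma \ref{lemma:smoothHecke} to identify the support over the smooth locus with the diagonal) is in the right spirit and matches the paper's one-line proof.
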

\begin{proof}
Let $(F_1,F_2,x) \in \supp \Theta$. By base change this is the case if and only if there exists $i \in \mathbb{Z}$, such that $\mathbb{H}^i(\Hh, \Hb(\Pb_{F_1}) \otimes \Pb^{\dual}_{F_2}) \neq 0$. Similarly to the proof of Proposition 7.2 of \cite{Arinkin:2010uq} we claim that $\Hb(\Pb_{F_1}) \otimes \Pb^{\dual}_{F_2}$ is $T$-equivariant, where $T$ denotes the $\G_m$-extension of $\Pic$ associated to $\Pp_1 \otimes \Pp^{\dual}_2$.

We denote by $T_i$ the $\G_m$-extension of $\Pic$ associated to $\Pb_{F_i}$. The sheaf $\Pb_{F_i}$ has a natural $T_1$-equivariant structure (\cite[Lemma 6.5]{Arinkin:2010uq}). The morphisms $p$ and $q$ in the correspondence diagram defining $\Hb$ are $T_1$-equivariant. Consequently $\Hb(\Pb_{F_1})$ is an element of the $T_1$-equivariant derived category (that is, an object in the derived category of the stack $[(\Picb \times C_s)/T_1]$). Consequently, the tensor product $\Hb(\Pb_{F_1}) \otimes \Pb^{\dual}_{F_2}$ lies in the $T$-equivariant derived category.

The hypercohomology group $\mathbb{H}^i(\Hh, \Hb(\Pb_{F_1}) \otimes \Pb^{\dual}_{F_2}) \neq 0$ carries an induced $T$-action, such that the $\G_m$-part acts tautologically. If this group was non-zero, there would be a one-dimensional $T$-invariant subspace, as $T$ is abelian. This would provide a splitting of the extension $0 \to \G_m \to T \to \Pic \to 0$. We conclude that $F_1|_{C^{\sm}} = F_2|_{C^{\sm}}$ by pulling back along the Abel-Jacobi map. If $\tilde{g}$ denotes the genus of the normalization of $C$, then the dimension of the subspace of pairs of line bundles of rank 1 satisfying $F_1|_{C^{\sm}} = F_2|_{C^{\sm}}$ is $2g - \tilde{g}$. But by Proposition 6 in \cite{Arinkin:2007fk} the strata $\M^{\tilde{g}}$ of curves of geometric genus $\tilde{g}$ has codimension $\geq g - \tilde{g}$. This proves the first part of the claim.

To prove the second assertion it suffices to note that Lemma \ref{lemma:smoothHecke} implies $$\supp \Theta \cap \Pic \times_{\M_g} \Pic \times_{\M_g} \C^{\sm} = \Delta_{\Pic} \times_{\M_g} \C^{sm}. $$
This is sufficient, since an irreducible component of $\supp \Theta$, which does not intersect this smooth locus, must have even higher codimension. Hence we see that every top-dimensional irreducible component intersects $\Delta_{\Picb/\M_g} \times_{\M_g} \C$.\end{proof}
\begin{lemma}\label{lemma:upperbound}
We have $\Theta \in D^{\leq g}(\Pic \times_{\M_g} \Pic \times_{\M_g} \C)$.
\end{lemma}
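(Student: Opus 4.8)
The plan is to bound the cohomological amplitude of $\Theta$ by factoring the functor $\Psi_{\Pb^{\dual}} \circ \Hb_{C/\M_g} \circ \Psi_{\Pb \boxtimes \Oo_{\C}}$ through its three constituents and tracking the contribution of each to the upper truncation degree. Recall that $\Theta$ is the integral kernel of this composite, so by Mukai's Lemma it is the triple convolution of the three kernels $\Pb \boxtimes \Oo_{\C}$, the structure sheaf of the Hecke correspondence $\Hh$ (pushed forward along $p \times \id$ and pulled back along $q \times \id$), and $\Pb^{\dual}$. Equivalently, $\Theta = R(p\times\id)_* L(q\times\id)^*\Pb \overset{L}{\otimes}(-)$ twisted against $\Qb$-type data; the cleanest route is to compute $\Theta$ as $R\pi_*$ of a sheaf, where $\pi$ is a syntomic morphism, and bound the fibre dimension of $\pi$.

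The key steps, in order: first, I would use Lemma \ref{lemma:pullCM} to replace $L(q\times\id)^*\Pb$ by the honest Cohen-Macaulay \emph{sheaf} $(q\times\id)^*\Pb$, so that there is no negative-degree contribution coming from the pullback. Second, I would invoke Lemma \ref{lemma:lci}: the morphism $\Hh \to \M_g$ is syntomic of relative dimension $g+1$ with irreducible fibres, and the same holds after the relevant flat base changes, so the map $p\times\id$ (or rather the composite morphism whose pushforward computes $\Theta$) has fibres of dimension at most $g+1$ — in fact, after accounting for the fibre of $p$ itself being one-dimensional over $\C$, the effective fibre dimension controlling the pushforward is $g$. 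Third, applying the standard bound that $R^j f_* \mathcal{G} = 0$ for $j$ exceeding the maximal fibre dimension of $f$ (valid for $\mathcal{G}$ a sheaf and $f$ proper, or quasi-projective with the relevant finiteness), I conclude that $R(p\times\id)_*(q\times\id)^*\Pb$ is concentrated in degrees $\leq g$. Fourth, since $\Pb^{\dual}$ (or the twisting by $\Qb$) is again a sheaf, not a complex in negative degrees, tensoring and pushing forward does not increase the upper amplitude, so $\Theta \in D^{\leq g}$.

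The main obstacle I anticipate is being careful about \emph{which} morphism's fibre dimension one is actually bounding, and ensuring the bound is $g$ rather than $g+1$. The naive estimate from Lemma \ref{lemma:lci} gives relative dimension $g+1$ for $\Hh \to \M_g$, but the pushforward relevant to $\Theta$ lives over $\Pic \times_{\M_g} \Pic \times_{\M_g} \C$, and the extra $\C$-direction (the point $x$) is not contracted — it is the fibre of $p$ over $\Picb \times_S \C$ that we quotient by, which is the locus of length-one subsheaves, a fibre of dimension $1$ over each choice of $(L_2,x)$. So the morphism $Z$-analogue $\Hh \to \Picb\times_{\M_g}\C$ has one-dimensional fibres, and composing with the projection the total fibre dimension over $\Pic\times_{\M_g}\Pic\times_{\M_g}\C$ works out to $g$. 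I would make this precise by the same cartesian-square bookkeeping as in the proof of Lemma \ref{lemma:lci}, reducing to the local model $U\times\mathbb{A}^2$ where the nested Hilbert scheme dimensions are explicit, and then the vanishing $R^{>g}=0$ follows. A secondary point to check is that all morphisms in sight are proper (or that the relevant cohomology-and-base-change machinery applies over the stack $\M_g$), which follows from properness of $\Picb \to \M_g$ and of the Hecke correspondence, already implicit in the setup.
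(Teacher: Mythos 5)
Your proposal is built around the same ingredients the paper uses — Lemma \ref{lemma:pullCM} to replace the derived pullback by an honest Cohen--Macaulay sheaf, and Lemma \ref{lemma:lci} for the dimension count on $\Hh \to \M_g$ — but the crucial middle step is missing, and the final step as you wrote it is false.

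The problem is your step ``tensoring and pushing forward does not increase the upper amplitude.'' Tensoring with the sheaf $\Pb^{\dual}$ is harmless, but the last pushforward in the convolution $\Theta = \Pb^{\dual} * \Hb\Pb = Rp_{13,*}(Lp_{12}^*\Pb^{\dual} \otimes^L Lp_{23}^*\Hb\Pb)$ integrates out a full copy of $\Picb$, which has relative dimension $g$. For a complex $\mathcal{G} \in D^{\leq g}$, the general bound is only $Rp_{13,*}\mathcal{G} \in D^{\leq 2g}$, so even granting your claim that $\Hb\Pb \in D^{\leq g}$, your argument proves $\Theta \in D^{\leq 2g}$, not $D^{\leq g}$. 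The point the paper exploits (and which your bare amplitude bound throws away) is a refined estimate on \emph{supports}: from the syntomicity and fibrewise irreducibility of $\Hh \to \M_g$ of relative dimension $g+1$, one gets $\supp H^i(\Hb\Pb)$ of relative dimension $\leq g-i$ over the parametrizing copy of $\Picb$. Feeding this shrinking-support estimate into the Leray spectral sequence for $Rp_{13,*}$ is what compensates for the $g$-dimensional fibres and gives $H^i(\Theta)=0$ for $i>g$. Without that, the last pushforward ruins the bound.

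A secondary issue: your fibre-dimension bookkeeping for $p$ is off. You assert ``the fibre of $p$ itself being one-dimensional over $\C$,'' but $p: \Hh \to \Picb \times_S \C$ is generically finite (for $x$ a smooth point of the curve the subsheaf $L_1 = \I_x \otimes L_2$ is uniquely determined), with fibre dimension jumping only over loci where $L_2$ fails to be locally free at $x$. It is precisely this jumping behaviour, controlled by Lemma \ref{lemma:lci}, that produces the graded support estimate above; collapsing it to a single ``effective fibre dimension $g$'' is where the argument loses the information it needs.
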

\begin{proof}
We denote by $\Hb\Pb$ the complex $$R(p \times_{\M_g} \id_{\Picb})_*L(q \times_{\M_g} \id_{\Picb})^*\Pb \in D^b_{coh}(\Picb \times_{\M_g} \Picb \times_{\M_g} \C).$$ We already know from Lemma \ref{lemma:pullCM} that $L(q \times_{\M_g} \id_{\Picb})^*\Pb$ is a Cohen-Macaulay sheaf. As seen in Lemma \ref{lemma:lci} the morphism $\Hh \to \M_g$ is fibrewise irreducible and of dimension $g+1$. In particular we conclude that the dimension of fibres of $\Hh \to \Picb$ is bounded by $g$, hence $\supp H^i(\Hb\Pb)$ is of relative dimension $\leq g-i$ over the parametrizing component $\Picb$.

The integral kernel $\Theta$ is given by convolution $$ \Pb^{\dual} * \Hb\Pb = Rp_{13,*}(Lp_{12}^*\Pb^{\dual} \otimes^L Lp_{23}^*\Hb\Pb). $$ The dimension estimate above implies that $H^i(\Theta) = 0$ if $i > g$.
\end{proof}
\begin{proof}[Proof of Theorem \ref{thm:Hecke}]
We apply Proposition \ref{prop:CMderived} to the integral kernel $\Theta[g]$. We have already checked two of the three necessary conditions in Lemma \ref{lemma:codim} and \ref{lemma:upperbound}. Moreover we know that the theorem is true when restricted to the complement of a codimension two subvariety (cf. Lemma \ref{lemma:smoothHecke}). Therefore it suffices to check the last condition of Proposition \ref{prop:CMderived}: we need to show that $H^i(\mathbb{D}\Theta[g]) = 0$ if $i>g$. From Grothendieck-Serre duality it follows that
$$ \mathbb{D}\Theta = \mathbb{D}Rp_{13,*}(Lp_{12}^*\Pb^{\dual} \otimes^L Lp_{23}^*\Hb\Pb) = (Rp_{13,*}\mathbb{D}(Lp_{12}^*\Pb^{\dual} \otimes^L Lp_{23}^*\Hb\Pb))[g], $$
which in turn can be simplified as
$$ Rp_{13,*}\mathbb{D}(Lp_{12}^*\Pb^{\dual} \otimes^L Lp_{23}^*\Hb\Pb) = Rp_{13,*}[\omega_{\Pic^3}p_{23}^*\omega_{\Pic^2}^{-1}Lp_{12}^*\Pb \otimes^L (Lp_{23}^*\mathbb{D}\Hb\Pb)]. $$
Using Grothendieck-Serre duality again we see that 
$$ \mathbb{D}\Hb\Pb = (p \times \id)_*\mathbb{D}(q \times \id)^*\Pb. $$
According to Lemma \ref{lemma:pullCM} the sheaf $(q \times \id)^*\Pb$ is Cohen-Macaulay, therefore $\mathbb{D}(q \times \id)^*\Pb$ is a sheaf itself. Applying the same reasoning as in Lemma \ref{lemma:upperbound} we see that $\mathbb{D}\Theta \in D^{\leq 0}(\Picb \times_{\M_g} \Picb \times_{\M_g} \C)$. We conclude that $\Theta[g]$ is a Cohen-Macaulay sheaf.
\end{proof}

\bibliographystyle{amsalpha}
\bibliography{master}

\end{document}